\documentclass[twoside,11pt]{article}
\usepackage{amsmath}
\usepackage{amsfonts}\usepackage{amsthm}
\usepackage{graphicx,pgf}
\usepackage{tikz} 
\usepackage{comment}
\usepackage{bm}
\usepackage{natbib,enumerate}

\topmargin0mm \oddsidemargin0mm \evensidemargin0mm \textheight210mm
\textwidth160mm
\parindent1.5em

\newtheorem{thm}{Theorem}[section]
\newtheorem{cor}[thm]{Corollary}

\newtheorem{lem}[thm]{Lemma}

\newtheorem{remark}[thm]{Remark}

\newcommand{\R}{{\mathbb R}}       
\newcommand{\K}{\mathcal{K}}
\newcommand{\N}{\mathcal{N}}
\newcommand{\ra}{\rightarrow}    
  
\newcommand{\A}{{\mathcal A}} 
   
\renewcommand{\L}{{\mathcal L}}

\newcommand{\integral}{\int_{0}^{\infty}}
\newcommand{\wn}{w^{(n)}}
\newcommand{\un}{u^{(n)}}
\newcommand{\unew}{u_{new}}
\newcommand{\vnew}{v_{new}}
\newcommand{\ep}{\epsilon}

\newcommand{\ltwo}{\mathbf{l}^{T}_{2}}
\newcommand{\vfg}{v_{f-g}}

\title{Existence of standing pulse solutions to a skew-gradient system}
\author{Yung-Sze Choi\thanks{Department of Mathematics, University of Connecticut, Storrs} \and Jieun Lee\thanks{Department of Mathematics, University of Connecticut, Storrs}}

\begin{document}
\maketitle
\begin{abstract}
Reaction-diffusion systems have been primary tools for studying pattern formation. A skew-gradient system is well known to encompass a class of activator-inhibitor type reaction-diffusion systems that exhibit localized patterns such as fronts and pulses. While there is a substantial literature for the case of a linear inhibitor equation, the study of nonlinear inhibitor effect is still limited. To fill this research gap, we investigate standing pulse solutions to a skew-gradient system in which both activator and inhibitor reaction terms inherit nonlinear structures. Using a variational approach that involves several nonlocal terms, we establish the existence of standing pulse solutions with a sign change. In addition, we explore some qualitative properties of the standing pulse solutions. 
\end{abstract}

\noindent \textbf{Mathematics Subject Classification} \quad 34C37 $\cdot$ 35B36 $\cdot$ 35J50 $\cdot$ 35K57 \\
\noindent \textbf{Keywords} \quad FitzHugh-Nagumo $\cdot$ local minimizer $\cdot$ standing pulse $\cdot$ skew-gradient
 \date{}

\section{Introduction}
\setcounter{equation}{0}
\renewcommand{\theequation}{\thesection.\arabic{equation}}

From vegetation patterns in an ecological system to propagating waves in a nerve fiber, fascinating patterns emerge in nature. 
These self-organizing structures, free of external input, may originate from homogeneous media through some spatial modulation
due to diffusion-driven Turing instability. Other patterns can represent phenomena far away from an equilibrium state; both standing and traveling waves are examples of the latter kind. In fact a standing or traveling front connects distinct equilibria, while  a pulse returns to the same steady state after undergoing
a large amplitude excursion. A pulse resembles a localized sharp spike and results from a delicate balance between gain and loss in the governing 
reaction kinetics. 
Competing mechanism, like in
activator-inhibitor systems such as FitzHugh-Nagumo and Gierer-Meinhardt equations, are therefore prime examples for pattern formation. 
Under appropriate circumstances dynamics of these pulses and their mutual interaction can be particle-like, and are referred to as dissipative solitons
\citep{Akhmediev:2008, Liehr:2013}. 
They are the building blocks for more complex structures.

In this paper, we study the existence of standing pulse solutions for a system of reaction-diffusion equations of the form
\begin{equation} \label{FN_nonlinear}
	\begin{cases}  
		u_t = du_{xx} + f(u) - v, \\ 
		\tau v_t = v_{xx}-\gamma v -v^3 + u,
	\end{cases}
\end{equation}
{on the infinite domain $(-\infty,\infty)$,}
where $f(u) = u(1-u)(u-\beta)$ and $d$, $\tau, \gamma$ and $\beta$ are positive constants.  It is a skew-gradient system which involves
an activator $u$ and an inhibitor $v$.

Restricted ourselves to two species cases, we consider  the reaction-diffusion system
\begin{equation} \label{skew-grad}
	\begin{cases}
		\tau_1 u_t = d_1 \Delta u + H_u(u, v),  \\
		\tau_2 v_t = d_2 \Delta v + (-1)^{k}H_v(u, v),
	\end{cases}
\end{equation}
where $\tau_ i > 0$ and $d_i > 0$ for $i=1,2$, $k \in \{0, 1\}$ and $H: \R^2 \ra \R$ is some smooth function. The system in \eqref{skew-grad} is said to have a skew-gradient structure if $k = 1$  \citep{Yanagida:2002b, Yanagida:2002a} and a gradient structure if $k=0$.
For a reaction-diffusion system with a gradient structure, 
there is a Lyapunov functional which eases the analysis of the time dependent problems; it also serves as a natural
variational functional for studying the stationary problem. The corresponding analysis 
of a skew-gradient system is more delicate.

A well-studied skew-gradient model that generates standing pulse solutions is
\begin{equation} \label{FN_parabolic}
	\begin{cases}  
		u_t = du_{xx} + f(u) - v,  \\ 
		\tau v_t = v_{xx} + u -\gamma v,
	\end{cases}
\end{equation}
which is referred to as the FitzHugh-Nagumo equations (the original FitzHugh-Nagumo model does not have the term $v_{xx}$, see \citealt{Fitzhugh:1961, Nagumo:1962}).
For finite domains a variational formulation of the above problem readily yields a global minimizer that corresponds to a steady state solution.
However such solution is usually oscillatory and is not a single localized sharp spike when the domain is large.  In fact when the domain is unbounded,  there is no global minimizer and a more careful treatment is necessary.
In \cite{Klaasen:1984}, the existence of positive standing pulse solutions to \eqref{FN_parabolic} was established for large $\gamma$ and large $d$ by a shooting argument when the parameters allow the presence of multiple constant steady states.
Using a special transformation to convert the equations to a quasi-monotone system for large $\gamma$ and $d$, 
 \citet{Reinecke:1999} employed comparison functions and finite domain approximation in $\R^N$ to establish a positive radially 
symmetric standing pulse solution. In \citet{Chen:2012}, a variational approach was applied to find solutions with a sign change when the activator diffusivity is small compared to that of inhibitor, i.e. $d \ll 1$. The solution obtained is a local, rather than global, minimizer.
There are also numerous numerical works on this model. Typically they are continuation type methods which require  good initial guesses to
start the algorithm. Recently
 a robust steepest descent algorithm for finding the waves numerically without a good initial guess has  been proposed in
 \citet{choi:2019a}.

When $f(u)$ is replaced by 
$f(u)/d$ in \eqref{FN_parabolic} for small $d$, this corresponds to studying the equations in a different parameter regime.
One can employ other well established methods, for example $\Gamma$-convergence or the geometric perturbation method,  to
 study standing pulses and their corresponding stability.  Some related models like Ohta-Kawasaki  involve a volumetric constraint.
See for example  \cite*{chen:2018a}; \cite{chen:2018b, vS, WW, Ren:2008} and the many references therein.

Over the past two decades, the study of \eqref{FN_parabolic} has further stretched into various extensions of the equations. 
An extension to a three-component system of \eqref{FN_parabolic} with an additional inhibitor equation of linear form has also been considered in \citet{Bode:2002, Doelman:2009, vanHeijster:2019} and the references therein. The existence of the corresponding standing and traveling pulse solutions has been investigated both  analytically and numerically. 
Despite the volume of the work on the system \eqref{FN_parabolic}, the study has been limited to the case of linear dependence of inhibitor reaction term. Although \citet{Chen:2009} considered a nonlinear inhibitor equation by adding $h(v) \in C^1$ that satisfies $h(0) = h'(0) = 0$ and $v \,h(v) \geq 0$, their existence result requires the domain to be bounded and follows from a minimax theorem established by Benci and Rabionowitz \citep{br:1979}. {Such saddle point type solution is unstable and our interest lies in local or global minimizer.}
Taking the special case $h(v)=v^3$,
in this paper we study the existence of standing pulse solutions on the real line in the presence of cubic nonlinearity in the inhibitor equation of \eqref{FN_parabolic}. 
Specifically, we study the steady-state of \eqref{FN_nonlinear}, namely the system 
\begin{equation} \label{FN_nonlinear_steady}
	\begin{cases}  
		du_{xx} + f(u) - v = 0,  \\ 
		v_{xx}-\gamma v -v^3 + u = 0,
	\end{cases}
\end{equation}
on $(-\infty, \infty)$ for small $\gamma$ and $d$. Observe that the system \eqref{FN_nonlinear_steady} has a skew-gradient structure with 
\[
H(u, v) =  \frac{1}{2} \gamma v^2 + \frac{1}{4}v^4 - uv - F(u),
\] where $F(u) = - \int_{0}^{u} f(x) \, dx = u^4/4-(1+\beta)u^3/3 +  \beta u^2/2$. {To our best knowledge, this work is the first attempt to show the existence of standing pulse solutions on $(-\infty, \infty)$ in the skew-gradient system \eqref{FN_parabolic} that accounts for the nonlinear dependence of inhibitor reaction term. The use of the explicit form of the Green's function in the case of linear inhibitor equation needs to be substantially modified. The additional nonlinearity may enable the model to capture more complex behavior of standing pulse solutions. Other kinds of 
nonlinearity  associated with more general skew-gradient systems can be studied later on as the techniques we develop in this work may apply to a broader class of skew-gradient systems.} 
We will look for solutions $(u, v)$ that are even in $x$ and 
\[
\lim_{|x| \ra \infty} (u, v) = (0, 0).
\]
Due to the symmetry, we restrict our attention to $[0, \infty)$. The anchor at the origin prevents the solution from translation,
which is important in analyzing the equations.
Our main result is summarized in the following theorem:

\begin{figure}[t]
\begin{center}
\includegraphics[trim=20 50 20 150, width=.6\textwidth]{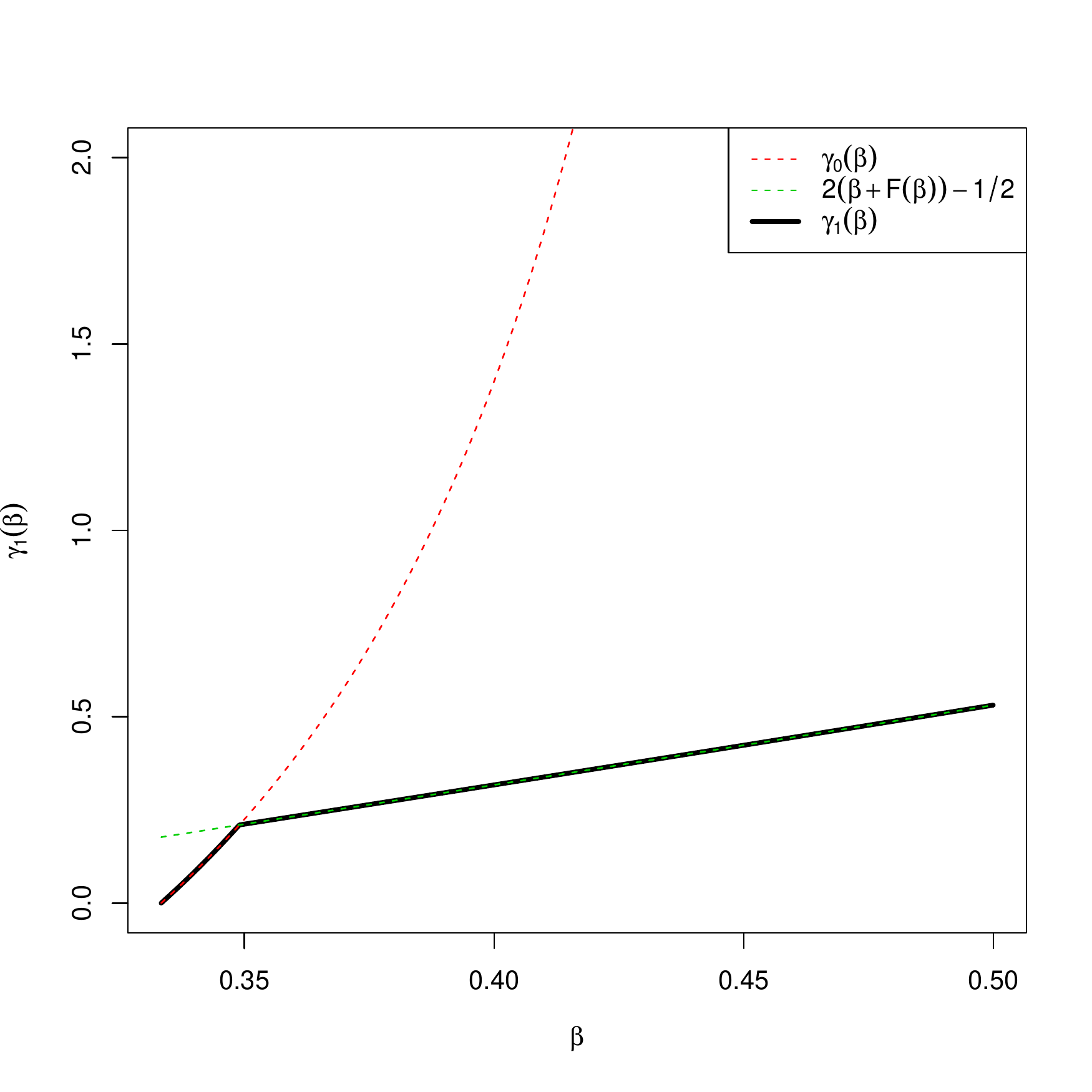}
\end{center}
\caption{A plot of $\gamma_1$ versus $\beta$ when $\beta \in (1/3,1/2)$. Here
 $\gamma_0= 3 \beta^2/(1-2\beta) -1$ and $\gamma_1=\min \{ \gamma_0, 2(\beta+F(\beta))-1/2 \}$.
 For $\gamma<\gamma_1$, there is a standing pulse solution when $d$ is sufficiently small.}
 \label{fig:1}
\end{figure}

\begin{thm} \label{theorem1}
Let $\beta \in (1/3, 1/2)$ be given. There exists a $\gamma_1 > 0$ so that for any $\gamma \in (0,\gamma_1]$, we have a
$d_1 = d_1(\gamma) > 0$ such that whenever $\gamma < \gamma_1$ and $d < d_1$, then \eqref{FN_nonlinear_steady} has a solution which is denoted by $(u_0, v_0)$ with $u_0, v_0 \in C^{\infty}(0, \infty)$ and  exponentially decay to 0 as $x \ra \infty$; that is, \eqref{FN_nonlinear_steady} possesses a standing pulse solution. 
\end{thm}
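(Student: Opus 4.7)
The approach I would take is variational: eliminate $v$ from \eqref{FN_nonlinear_steady} to obtain a reduced nonlocal problem in $u$, then produce a local minimizer inside a constraint set shaped by the formal $d\to 0$ limit. For each even $u\in H^1(\R)$ decaying at infinity, the inhibitor equation $v_{xx}-\gamma v-v^3+u=0$ has a unique even decaying solution $v=T(u)$, because $v\mapsto \int_{\R}[\tfrac12 v_x^2+\tfrac{\gamma}{2}v^2+\tfrac14 v^4-uv]\,dx$ is strictly convex and coercive. Critical points of the Lagrangian
\[
E(u,v)=\int_{\R}\Bigl[\tfrac{d}{2}u_x^2-F(u)+uv-\tfrac{1}{2}v_x^2-\tfrac{\gamma}{2}v^2-\tfrac{1}{4}v^4\Bigr]dx
\]
solve \eqref{FN_nonlinear_steady}. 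Since $E$ is strictly concave in $v$, $T(u)=\arg\max_v E(u,v)$; testing the inhibitor equation against $v$ gives $\int v_x^2+\gamma v^2+v^4\,dx=\int uv\,dx$, which reduces the Lagrangian at $v=T(u)$ to
\[
J(u):=E(u,T(u))=\int_{\R}\Bigl[\tfrac{d}{2}u_x^2-F(u)+\tfrac12 u\,T(u)+\tfrac14 T(u)^4\Bigr]dx.
\]
Critical points of $J$, paired with $v=T(u)$, solve \eqref{FN_nonlinear_steady}. Two nonlocal terms in $u$ appear, of which $\int T(u)^4\,dx$ is genuinely new compared with the linear inhibitor case.

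In the singular limit $d\to 0$ the outer relation $f(u)=v$ holds, and one expects a pulse in which $u$ lies on the positive branch on a core interval $[0,\rho]$, crosses zero through an $O(\sqrt d)$ interior layer, and decays along the lower branch. This motivates an admissible set of the form
\[
A=\Bigl\{u\in H^1_{\mathrm{even}}(\R): \rho_1\le \rho(u)\le \rho_2,\ u\text{ satisfies prescribed level conditions at } \rho(u)\Bigr\},
\]
where $\rho(u)$ is a designated level crossing of $u$ and $\rho_1,\rho_2$ are read off from the $d=0$ energy profile. The even symmetry anchors the pulse at the origin, precluding loss of compactness by translation, while the constraint on $\rho(u)$ rules out both collapse ($\rho\to 0$, $u\to 0$) and escape to a nonzero constant equilibrium ($\rho\to\infty$). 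Uniform estimates on $T$ (for instance $\|T(u)\|_{L^\infty}\le C\|u\|_{L^\infty}$, together with continuity in $H^1$) and symmetry deliver lower semi-continuity and coercivity of $J$ on $A$, so a minimizing sequence converges along a subsequence to some $u_0\in A$ attaining $\inf_A J$.

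The main obstacle is to show $u_0$ lies in the \emph{interior} of $A$, hence is an unconstrained critical point of $J$. For each active constraint one must produce a competitor in $A$ with strictly smaller energy; this is where the smallness thresholds on $\gamma$ and $d$ enter, and it is exactly the condition $\gamma<\gamma_1=\min\{\gamma_0,\,2(\beta+F(\beta))-1/2\}$ which ensures that the energy of an interior-layer pulse beats every boundary configuration to leading order in $d$. The comparison blends the $d=0$ outer energy with $O(\sqrt d)$ interior-layer corrections and the nonlocal action of $T$, and this careful bookkeeping is the technical heart of the proof. Once the interior property is established, $(u_0,T(u_0))$ solves \eqref{FN_nonlinear_steady} classically by elliptic bootstrap on the system; and since $u_0,v_0\to 0$ at infinity, the linearization at $(0,0)$ has characteristic roots with nonzero real parts, giving exponential decay of $(u_0,v_0)$ as $|x|\to\infty$.
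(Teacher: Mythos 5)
Your overall architecture matches the paper's: solve the inhibitor equation by a convex minimization to get a nonlinear operator $v=\N u$ (your $T$), reduce to the nonlocal functional $J$, minimize over a constraint set encoding the pulse shape, and then show the constraints are inactive. However, the two steps you label as routine are precisely where the paper's work lies, and as written your proposal has genuine gaps there.

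First, boundedness below and weak lower semicontinuity of $J$ on the admissible set are not consequences of ``uniform estimates on $T$ and symmetry.'' The term $\int u\,\N u\,dx$ is nonnegative, but to get a lower bound on $J$ that is coercive in the core length (so that the pulse neither collapses nor spreads) one must bound $\int u\,\N u$ from \emph{below} for sign-changing $u$; the paper does this by splitting $u$ into positive and negative parts, sandwiching $\N$ between two explicit linear resolvents $\L_0\le\N\le\L$ on nonnegative inputs, and estimating the cross terms via the Green's functions. This is exactly where the hypothesis $\gamma<\gamma_0=3\beta^2/(1-2\beta)-1$ enters (it makes the coefficient of $x_1$ in the lower bound positive), not a layer-energy comparison. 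Likewise, passing to the limit in $\int u^{(n)}\N u^{(n)}$ along a weakly convergent minimizing sequence is not standard lower semicontinuity; the paper uses the monotonicity inequality $\int(w_1-w_2)(\N w_1-\N w_2)\ge 0$ plus a tail estimate to conclude.

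Second, your account of the interior-point step is not a proof and mislocates the role of $\gamma_1$. The paper shows the minimizer avoids the constraints not by comparing outer energies with $O(\sqrt d)$ layer corrections, but by: (i) a corner lemma ruling out kinks in $u_0$ at the constraint levels; (ii) positivity of $v_0$ everywhere, proved via the left-eigenvector combinations $\psi_i=u_0+\alpha_i v_0$ of the linearization together with maximum principles (this is where $d\le\beta^2/(4(1+\beta\gamma))$ is used); (iii) truncation competitors whose nonlocal energy change is controlled by the sign of $v_0$; (iv) an eigenvector/decay-rate argument forcing $u_0$ to change sign; and (v) a Hamiltonian identity evaluated at a putative flat piece $u_0\equiv\beta$, which is where the second branch $\gamma<2(\beta+F(\beta))-1/2$ of $\gamma_1$ comes from. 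None of these appears in your outline, and the mechanism you propose in their place (``the energy of an interior-layer pulse beats every boundary configuration to leading order in $d$'') is not carried out and would not by itself yield the threshold $\gamma_1$. The final bootstrap and exponential decay claims are fine once the interior property is established.
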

\noindent 
We have an explicit estimate  for $\gamma_1$ in Lemma~\ref{lem_gamma1}. 
To get a sense of the constraint on $\gamma$ in the above theorem, a plot of $\gamma_1$ for $\beta \in (1/3,1/2)$ is presented in Figure~\ref{fig:1}. For $\gamma< \gamma_1$ a standing pulse solution exists if $d \leq d_1(\gamma)$. To simplify notation, we suppress the dependence of $\beta$ when we refer to $\gamma_1$ or $d_1$; for instance, we write $d_1(\gamma)$ rather than $d_1(\beta, \gamma)$. Some qualitative properties of the above standing pulse solution is established in the next theorem.

\begin{thm}\label{theorem2}
Suppose that $(u_0, v_0)$ is a standing pulse solution obtained by Theorem \ref{theorem1}. Then
\begin{enumerate}[(i)] \itemsep=0.25mm
\item There is a pair of unique points $0<x_1<x_2< \infty$ such that $u_0(x_1) = \beta$ and $u_0(x_2) = 0$, respectively. Moreover $u'_0(x_1) < 0$ and $u'_0(x_2) < 0$.
\item $u_0 >\beta$ on $[0,x_1)$, $0<u_0<\beta$ on $(x_1,x_2)$ and $u_0<0$ on $(x_2,\infty)$.
\item $u_0'<0$ on $[x_1,x_2]$.
\item $u_0$ possesses one global negative minimum on $(x_2, \infty)$; this is also the unique local minimum point of $u_0$ on $[x_1,\infty)$.
\item $v_0 > 0$ on $[0, \infty)$. 
\end{enumerate}
\end{thm}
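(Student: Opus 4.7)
My plan is to establish \emph{(v)} first and feed it into an ODE analysis of the first equation for parts \emph{(i)}--\emph{(iv)}. For the positivity of $v_0$, I would rewrite the inhibitor equation as $-v_0'' + (\gamma + v_0^2)\,v_0 = u_0$ and argue by contradiction: at an interior negative minimum $x_0$ of $v_0$, $v_0''(x_0) \geq 0$ forces $u_0(x_0) \leq \gamma v_0(x_0) + v_0(x_0)^3 < 0$. To exclude such a minimum I would compare with the solution $w_0$ of the linear problem $-w_0'' + \gamma w_0 = u_0$, represented via the explicit Green's function
\[
w_0(x) = \frac{1}{2\sqrt{\gamma}}\int_{-\infty}^{\infty} e^{-\sqrt{\gamma}\,|x-y|}\, u_0(y)\, dy.
\]
The construction of $(u_0,v_0)$ in Theorem~\ref{theorem1} concentrates the positive part of $u_0$ on a macroscopic interval around the origin whose convolution contribution dominates the negative tail, so $w_0 > 0$; a monotone iteration or sub-/supersolution argument against the extra nonlinear sink $v^3$ then transfers positivity to $v_0$.

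Next I would derive the first-integral identity
\[
\frac{d}{2}\,u_0'(x)^2 - F(u_0(x)) = -\int_x^{\infty} v_0(t)\, u_0'(t)\, dt,
\]
by multiplying the activator equation by $u_0'$ and integrating from $x$ to $\infty$, using $u_0'(0)=0$ and the exponential decay at infinity. For \emph{(ii)}--\emph{(iii)} I would then exploit the pointwise sign of $d\,u_0'' = v_0 - f(u_0)$: on any subset where $0 < u_0 < \beta$ we have $f(u_0) < 0$ and $v_0 > 0$, so $u_0'' > 0$ and $u_0$ is strictly convex there. Strict convexity together with the boundary values $u_0(x_1)=\beta$, $u_0(x_2)=0$ and the requirement $u_0 < 0$ just past $x_2$ forces $u_0$ to decrease strictly on $[x_1,x_2]$ with $u_0'(x_1), u_0'(x_2) < 0$, and pins $u_0 \in (0,\beta)$ on the open interval by the chord inequality. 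Uniqueness of $x_1$ and $x_2$, and the exclusion of a second ``positive bump'', I would handle by a variational cut-and-translate competitor against $(u_0,v_0)$ contradicting the local minimizer property from Theorem~\ref{theorem1}.

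For \emph{(iv)}, since $u_0 < 0$ on $(x_2,\infty)$ with $u_0 \to 0$ at infinity, $u_0$ attains a global negative minimum at some $x^* \in (x_2,\infty)$; at such a point $u_0'(x^*)=0$ and $d\,u_0''(x^*) \geq 0$ give $v_0(x^*) \geq f(u_0(x^*)) > 0$ since $f > 0$ on $(-\infty,0)$. To show $x^*$ is the only interior critical point of $u_0$ on $[x_1,\infty)$, any additional interior extremum on $(x_2,\infty)$ would have to be separated from $x^*$ by an interior local maximum, producing an oscillatory profile on the negative axis which I would rule out using either the smallness of $d$ (localizing any transition layer and bounding how many can fit before the required decay) or the minimality of $(u_0,v_0)$. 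The main obstacle is precisely this last point: on the negative axis both $v_0$ and $f(u_0)$ are positive, so $d\,u_0'' = v_0 - f(u_0)$ has no definite pointwise sign, and ruling out oscillations requires combining the singular limit $d\to 0$ with the variational minimality from Theorem~\ref{theorem1}.
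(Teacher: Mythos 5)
Your overall plan (prove \emph{(v)} first, then use it for an ODE analysis of the activator equation) matches the paper's organization, but two of the key steps are left as gestures where the paper has to do real work, and the gaps are genuine.

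First, your proof of $v_0 > 0$ is not a proof. The claim that the linear Green's function solution $w_0$ is positive because ``the positive part of $u_0$ concentrates on a macroscopic interval around the origin whose convolution contribution dominates the negative tail'' is exactly the sort of assertion the paper goes to considerable lengths to make rigorous, and it is not true in any obvious pointwise sense: near the far negative tail $w_0(x)$ is governed by the \emph{local} values of $u_0$, and indeed the paper eventually proves $u_0<0$ there. The paper instead proves $v_0(0)>0$ by a contradiction argument at the global minimum of $u_0$ (Lemma~\ref{v0(0)_pos}), and then controls the full line by passing to the diagonalized combinations $\psi_i = u_0 + \alpha_i v_0$ of the linearized matrix $A$ and showing $\psi_i\geq 0$ via a delicate maximum-principle/continuation argument (Lemma~\ref{psi2}), after which $v_0=(\psi_2-u_0)/\alpha_1\geq 0$ on $[x_2,\infty)$ plus the maximum principle on $[0,x_2]$ gives positivity everywhere (Lemma~\ref{pos_all}). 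You would need to supply the entire $\psi_i$ machinery (or an equivalent) to close this step.

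Second, you explicitly flag \emph{(iv)} as the unresolved obstacle, and that gap is real. The missing idea in the paper is that Lemma~\ref{pos_all} also gives $v_0'<0$ on $[x_2,\infty)$ (from the Hopf lemma applied to $v_0''-(\gamma+v_0^2)v_0=-u_0\geq 0$ once $v_0>0$ is known). Differentiating the activator equation yields $d\,u_0'''+f'(u_0)\,u_0'=v_0'<0$ on $[x_2,\infty)$, and since $f'(u_0)\leq 0$ where $u_0\leq 0$, $u_0'$ satisfies a maximum principle: it cannot attain a non-positive interior minimum. Combined with the slow-decay asymptotics ($u_0\sim -C_1 e^{-\sqrt{\lambda_1}x}$, so $u_0'>0$ for large $x$) this forces $u_0'$ to cross zero exactly once, giving the unique negative minimum. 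This is a clean ODE argument and does not require the singular-limit $d\to 0$ analysis you suggest.

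Third, your convexity argument for \emph{(ii)}--\emph{(iii)} is attractive and essentially equivalent to the paper's use of the Hopf lemma on $d\,u_0''=v_0-f(u_0)>0$, but it presupposes that $u_0$ satisfies the Euler--Lagrange equation pointwise on $(x_1,x_2)$, i.e.\ that the constraints $u_0=\beta$ and $u_0=0$ are nowhere active. Ruling this out is the entire content of Sections~\ref{sec_corner}--\ref{sec_constraintBeta}: the corner lemma, the truncation competitors that lower $J$ when $u_0$ has a hump or sits at a constraint, the Hamiltonian identity \eqref{Hamiltonian} to exclude $u_0\equiv\beta$ on an interval, and the decay analysis \eqref{linearization} with the eigenvector inequalities \eqref{evector_dot} to show $u_0$ actually changes sign. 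Your ``cut-and-translate competitor'' is pointing in the right direction but is not a substitute for these arguments; in particular, translation alone does not lower $J$, whereas the paper's trimming arguments (Lemmas~\ref{nonloc_decrease}--\ref{u0neq0}) quantify the decrease in the nonlocal term against the increase elsewhere.

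In short: the skeleton is right, but the positivity of $v_0$, the constraint-inactivity, and the uniqueness of the minimum are all left as gaps, and each requires a nontrivial idea that the paper supplies ($\psi_i$ combinations, truncation/Hamiltonian arguments, and $v_0'<0$ plus the maximum principle for $u_0'$, respectively).
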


The remainder of this paper is organized as follows. In section \ref{sec_prelim}, we show the existence of a nonlinear operator $\N$ such that for any given $u \in H^1(0, \infty)$, $v = \N u \in H^3(0, \infty)$ solves (\ref{FN_nonlinear_steady}b) uniquely. We further investigate the properties of $\N$ including its (Fr\'echet) differentiability. Section \ref{sec_varForm} introduces a functional $\hat{J}$ whose minimizer corresponds to the solution of \eqref{FN_nonlinear_steady}. It can be concluded from the analysis in \cite{Chen:2012} that a global minimizer of $\hat{J}$ does not exist. We therefore introduce a class of admissible functions $\A$ and consider a minimizer of $J = \hat{J}|_{\A}$. We note that with the nonlinear reactions in both equations of \eqref{FN_nonlinear_steady}, $J$ involves two nonlocal terms. A substantial part of Section \ref{sec_varForm} is dedicated to the treatment of the nonlocal terms. The positivity of the nonlocal term as well as the bounds of $\N u$ are discussed. Section \ref{sec_seqEstimate} derives a priori estimates for a minimizing sequence of $J$, and a minimizer $u_0 \in \A$ with $J(u_0) < 0$ is extracted from the minimizing sequence in Section \ref{sec_existence}. Our main task in subsequent sections is to show that the constraints imposed on $\A$ are not actively engaged. In sections \ref{sec_corner} and \ref{sec_positive}, some essential properties of the minimizer $u_0$ and $v_0 = \N u_0$ are established. We show that $u_0 \in C^1$ which in turn allows positivity of $v_0 = \N u_0$ to be shown. Such properties then enable us to eliminate the possibility of $u_0$ equals to one of the constraints in Sections \ref{sec_constraint0} and \ref{sec_constraintBeta}. By showing that the constraints imposed on $\A$ are in fact inactive, we conclude that the {minimizer $u_0$} is a standing pulse solution of \eqref{FN_nonlinear_steady}. 

\section{A nonlinear inhibitor equation} \label{sec_prelim}
\setcounter{equation}{0}
\renewcommand{\theequation}{\thesection.\arabic{equation}}

When a variational method is employed to find a standing pulse solution of \eqref{FN_parabolic}, one introduces a linear operator $\cal L$ associated with the inhibitor equation so that $v= {\cal L} u$. This section serves as a counterpart when we are confronted
with a nonlinear inhibitor equation.
For any given $u \in H^1(0, \infty)$, we show that there exists a nonlinear operator $\N: H^1(0, \infty) \ra H^3(0,\infty)$ such that $v= \N u$ satisfies (\ref{FN_nonlinear_steady}b). 
It is also necessary to examine the (Fr\'echet) differentiability of this operator ${\cal N}$ in our new variational formulation.
While properties for the linear operator ${\cal L}$ is more or less obvious, the same cannot be said about ${\cal N}$.
We begin with some basic estimates.

\begin{lem} \label{u_embedding}
Suppose $u \in H^1(0, \infty)$ and $u_1, u_2 \in H^1(0, \infty)$. Then 
\begin{enumerate}[(i)] \itemsep=0.25mm
\item $\|u\|_{L^{\infty}(0, \infty)} \leq \sqrt{2}\,\|u\|_{H^1(0, \infty)}$ and $u(x) \ra 0$ as $x \ra 0$. 
{\item $\|u_1u_2\|_{H^1(0, \infty)} \leq \sqrt{5}\,\|u_1\|_{H^1(0,\infty)}\|u_2\|_{H^1(0,\infty)}$.}
\end{enumerate}
\end{lem}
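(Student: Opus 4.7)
The plan is to treat (i) as a standard Sobolev embedding into $L^\infty$ on the half-line and (ii) as a product rule estimate that uses (i) to control the sup-norm of each factor. The entire lemma is essentially bookkeeping of constants.

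For (i), I would first observe that the continuous representative of $u \in H^1(0,\infty)$ (obtained, e.g., by extending $u$ to $\R$ by even reflection and invoking the classical one-dimensional embedding, or directly by Morrey/absolute continuity) is absolutely continuous on compacta, so that $u^2$ is absolutely continuous with $(u^2)' = 2uu' \in L^1(0,\infty)$ by Cauchy--Schwarz. Then $u^2(x)$ has a finite limit as $x\to\infty$, and integrability of $u^2$ forces that limit to be $0$; this gives the decay assertion (up to fixing the typo $x\to\infty$). Writing
\[
u^2(x) \;=\; -\int_x^\infty 2 u(t) u'(t)\,dt \;\leq\; \int_x^\infty \bigl(u(t)^2 + u'(t)^2\bigr)\,dt \;\leq\; \|u\|_{H^1(0,\infty)}^2
\]
and taking square roots produces the embedding (in fact with constant $1$, which is sharper than the $\sqrt{2}$ recorded in the statement).

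For (ii), I would expand
\[
\|u_1 u_2\|_{H^1}^2 \;=\; \int_0^\infty (u_1 u_2)^2\,dx \;+\; \int_0^\infty \bigl(u_1' u_2 + u_1 u_2'\bigr)^2\,dx.
\]
The first integral is at most $\|u_1\|_{L^\infty}^2 \|u_2\|_{L^2}^2 \leq \|u_1\|_{H^1}^2\|u_2\|_{H^1}^2$ by the sharp form of (i). For the second integral I would use $(a+b)^2 \leq 2(a^2+b^2)$ to split, then bound each term by the $L^\infty$ norm of one factor times the $L^2$ norm of the derivative of the other:
\[
\int_0^\infty \bigl(u_1' u_2 + u_1 u_2'\bigr)^2\,dx \;\leq\; 2\|u_2\|_{L^\infty}^2 \|u_1'\|_{L^2}^2 + 2\|u_1\|_{L^\infty}^2 \|u_2'\|_{L^2}^2 \;\leq\; 4\|u_1\|_{H^1}^2\|u_2\|_{H^1}^2.
\]
Adding the two pieces gives $\|u_1 u_2\|_{H^1}^2 \leq 5 \|u_1\|_{H^1}^2 \|u_2\|_{H^1}^2$, which is the claim.

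There is no serious obstacle here; the only mildly delicate point is that, to land exactly on the constant $\sqrt{5}$ in (ii), one must run the argument of (i) with its sharp constant $1$ rather than the looser $\sqrt{2}$ stated --- otherwise the $L^2$-contribution alone would already overshoot. Fortunately the pointwise identity above produces that sharp constant automatically, so no extra work is needed.
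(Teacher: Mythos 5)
Your proof is correct, and part (ii) follows exactly the paper's decomposition ($L^2$ term plus the split $(a+b)^2\le 2a^2+2b^2$ on the derivative term, each factor controlled through the $L^\infty$ embedding, yielding $1+2+2=5$). The only real difference is in part (i): the paper integrates the identity $u^2(x)=u^2(t)+\int_t^x D(u^2)\,ds$ in $t$ over a unit interval $(a,a+1)$ and takes the supremum over $a$, which gives the constant $\sqrt{2}$ stated in the lemma, whereas you use the tail identity $u^2(x)=-\int_x^\infty 2uu'\,dt$ (after first establishing the decay at infinity) and land on the sharp constant $1$. Your closing remark is well taken and in fact exposes a small wrinkle in the paper: the stated constant $\sqrt{2}$ from (i) is too weak to produce $\sqrt{5}$ in (ii) (it would give $\sqrt{10}$), and the paper's own displayed intermediate step for (ii), which writes $\|u_1u_2\|_{L^2}^2\le\|u_1\|_{L^2}^2\|u_2\|_{L^2}^2$ and $\|u_2\,Du_1\|_{L^2}^2\le\|Du_1\|_{L^2}^2\|u_2\|_{L^2}^2$, is not literally valid; those $L^2$ norms must be read as $L^\infty$ (hence $H^1$, with the sharp constant $1$) norms of the undifferentiated factor. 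Your version supplies exactly the bound that makes the paper's final constant $\sqrt{5}$ legitimate, so no gap remains; you also correctly flag that the decay claim ``$u(x)\to 0$ as $x\to 0$'' in the statement is a typo for $x\to\infty$.
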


\begin{proof}
Given any $a \in [0, \infty)$, let $a \leq t < x \leq a+1$. By integrating both sides of
\begin{equation*}
u^2(x) = u^2(t) + \int_{t}^{x} D(u^2(s)) \, ds
\end{equation*}
with respect to $t$ over the interval $(a, a+1)$, we obtain from the Young's inequality
\begin{align*}
\|u\|^2_{L^{\infty}(a, a+1)} &\leq \|u\|^2_{L^2(a, a+1)} + 2\|u\|_{L^2(a, a+1)}\|Du\|_{L^2(a, a+1)} \\
&\leq 2\|u\|^2_{H^1(a, a+1)} \;.
\end{align*}
Taking the supremum over $a \in [0, \infty)$ yields $\|u\|_{L^{\infty}(0,\infty)} \leq \sqrt{2} \, \|u\|_{H^1(0,\infty)}$. Since $\|u\|_{H^1(a, \infty)} \ra 0$ as $a \ra \infty$, it is clear that $u \ra 0$ as $x \ra \infty$. This completes the proof of (i). Next, statement (ii) follows from
\begin{align*}
\|u_1u_2\|_{H^1} &= \left( \|u_1u_2\|^2_{L^2} + \| u_2 \, Du_1+ u_1\, Du_2 \|_{L^2}^2 \right)^\frac{1}{2} \\
&\leq \left( \|u_1\|^2_{L^2}\|u_2\|^2_{L^2} + 2 \| Du_1 \|^2_{L^2} \| u_2\|^2_{L^2} + 2  \|u_1\|^2_{L^2}\| D u_2\|_{L^2}^2 \right)^\frac{1}{2} \\
&\leq (5 \|u_1\|^2_{H^1} \|u_2\|^2_{H^1})^\frac{1}{2}\,.
\end{align*}
\end{proof}

\begin{lem} \label{v_embedding}
{Assume  $\gamma > 0$, $f \in H^1(0, \infty)$ and $p \in H^1(0, \infty)$ with $p \geq 0$. If $v \in H^1(0, \infty)$ satisfies
\begin{equation} \label{v_and_f}
\int_0^\infty \left\{v_x \varphi_x + (\gamma + p)v \varphi \right\} \, dx = \int_0^\infty f \varphi \, dx \;,  \quad \text{$\forall \varphi \in H^1(0, \infty)$}
\end{equation}
then $v \in H^3(0, \infty)$ and $\|v\|_{H^3(0, \infty)} \leq C_0\|f\|_{H^1(0, \infty)}$ for some positive constant $C_0 = C_0(\gamma, \|p\|_{H^1})$.}
\end{lem}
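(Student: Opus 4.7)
The plan is a standard elliptic regularity bootstrap, tailored so that the multiplicative constants only depend on $\gamma$ and $\|p\|_{H^1}$. The key inputs are (a) coercivity of the bilinear form supplied by $\gamma>0$ and $p\ge 0$, and (b) the product estimate of Lemma~\ref{u_embedding}(ii), which lets us move factors of $p$ in and out of $H^1$.

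First I would establish the baseline $H^1$ bound. Take $\varphi=v$ in \eqref{v_and_f}: the left side is at least $\min(1,\gamma)\|v\|_{H^1}^2$ since $p\ge 0$, and the right side is bounded by $\|f\|_{L^2}\|v\|_{L^2}$ via Cauchy-Schwarz. Dividing out gives $\|v\|_{H^1(0,\infty)}\le C_1(\gamma)\,\|f\|_{L^2(0,\infty)}$.

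Next I would lift to $H^2$. Restricting the test functions to $C_c^\infty(0,\infty)$, the identity \eqref{v_and_f} says that $v$ is a distributional solution of
\begin{equation*}
-v_{xx}+(\gamma+p)v=f\quad\text{on }(0,\infty).
\end{equation*}
By Lemma~\ref{u_embedding}(ii), $pv\in H^1\subset L^2$ with $\|pv\|_{L^2}\le\sqrt{5}\,\|p\|_{H^1}\|v\|_{H^1}$, and the other terms on the right are already in $L^2$, so $v_{xx}=(\gamma+p)v-f\in L^2$ and $v\in H^2(0,\infty)$ with an explicit $H^2$ bound in terms of $\gamma$, $\|p\|_{H^1}$ and $\|f\|_{H^1}$. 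Now that $v\in H^2$, I can integrate by parts in \eqref{v_and_f} against a general $\varphi\in H^1(0,\infty)$ and recover the natural boundary condition $v_x(0)=0$.

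Finally I would push to $H^3$ by differentiating the ODE. Since $(\gamma+p)v\in H^1$ (again by Lemma~\ref{u_embedding}(ii), so $\|(\gamma+p)v\|_{H^1}\le(\gamma+\sqrt{5}\|p\|_{H^1})\|v\|_{H^1}$) and $f\in H^1$, the equation gives
\begin{equation*}
v_{xxx}=\bigl((\gamma+p)v\bigr)_x-f_x\in L^2(0,\infty),
\end{equation*}
with an $L^2$ bound of the required form. Collecting the $H^1$, $H^2$, and $H^3$ pieces yields $\|v\|_{H^3}\le C_0(\gamma,\|p\|_{H^1})\|f\|_{H^1}$.

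There is no real obstacle here beyond bookkeeping; the only subtle point is that because the test space in \eqref{v_and_f} is $H^1$ rather than $H^1_0$, one must verify (after $v$ is upgraded to $H^2$) that the boundary term at $0$ produced by integration by parts vanishes, giving the Neumann condition $v_x(0)=0$. With the product estimate of Lemma~\ref{u_embedding}(ii) in hand, all constants can be tracked so as to depend only on $\gamma$ and $\|p\|_{H^1}$, as required.
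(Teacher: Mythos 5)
Your proposal is correct and follows essentially the same route as the paper: test with $\varphi=v$ to get the coercive $H^1$ bound $\|v\|_{H^1}\le\max\{1,1/\gamma\}\|f\|_{L^2}$, read off $v_{xx}=(\gamma+p)v-f\in L^2$ for $H^2$, and then use the product estimate of Lemma~\ref{u_embedding}(ii) to bound $\|(\gamma+p)v\|_{H^1}$ and conclude $v\in H^3$. The remark about the Neumann condition $v_x(0)=0$ is a harmless extra (the paper treats it separately in Lemma~\ref{natural_bndry}); no changes needed.
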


\begin{proof}
As $p \in L^{\infty}(0,\infty)$ by Lemma~\ref{u_embedding}, we have $p v \in L^2(0,\infty)$. {By choosing $\varphi = v$ in \eqref{v_and_f},}
it is immediate from regularity estimate that  $v \in H^2(0,\infty)$ and satisfies
$v_{xx} = -f + (\gamma + p) v$ a.e. {Moreover, we see that $\|v\|_{H^1(0, \infty)} \leq \max\{1, 1/\gamma\}\|f\|_{L^2(0, \infty)}.$} Finally we observe 
\begin{align*}
\|v_{xx}\|_{H^1} &\leq \gamma\|v\|_{H^1} + \sqrt{5} \,\|p\|_{H^1}\|v\|_{H^1}  + \|f\|_{H^1} \\
&\leq \left( \gamma + \sqrt{5} \,\|p\|_{H^1} \right)\max\{1, 1/\gamma\} \|f\|_{H^1} + \|f\|_{H^1} \;.
\end{align*}
Therefore, $\|v\|_{H^3} \leq C_0\|f\|_{H^1}$ for some positive constant $C_0 = C_0(\gamma, \|p\|_{H^1})$.
\end{proof}

\begin{lem} \label{v_variation}
Given $u \in H^1(0, \infty)$, define a functional $\K:H^1(0, \infty) \ra \R$ such that whenever $z \in H^1(0,\infty)$
\begin{equation*}
 \K(z) \equiv \integral \left\{\frac{z_x^2}{2} + \frac{\gamma z^2}{2} + \frac{z^4}{4} - uz \right\} \, dx. \label{var_K}
 \end{equation*}
\noindent Then the followings hold: \vspace{3mm} \\
\noindent (i) $\K$ is well defined.  \\
(ii) $\K$ is Fr\'echet differentiable with
\begin{equation*}
\K'(z)\varphi = \integral \left \{ z_x\varphi_x + \gamma z \varphi + z^3\varphi - u\varphi \right \} \, dx,  \quad \text{$\forall \varphi \in H^1(0, \infty)$}.
\end{equation*}
(iii) $\K$ has a minimizer $v \in H^1(0, \infty)$ which is a weak solution of (\ref{FN_nonlinear_steady}b), i.e.
\begin{equation*}
\integral \left \{ v_x\varphi_x + \gamma v \varphi + v^3\varphi - u\varphi \right \} \, dx = 0, \quad \text{$\forall \varphi \in H^1(0, \infty)$}. \\ 
\end{equation*}
\hspace{\parindent} Moreover, $v \in H^3(0,\infty)$ and satisfies $v_{xx} - \gamma v  -v^3 +u=0$  a.e. \\
\noindent (iv) The weak solution $v$ is unique. 
\end{lem}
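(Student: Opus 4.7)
The plan is to dispatch the four parts in sequence, using Lemma~\ref{u_embedding} and Lemma~\ref{v_embedding} as the main workhorses.

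For (i), I would note that $z\in H^1(0,\infty)$ is in $L^\infty\cap L^2$ by Lemma~\ref{u_embedding}(i), so $\int z_x^2, \int z^2 <\infty$, while $\int z^4\le \|z\|_{L^\infty}^2 \|z\|_{L^2}^2<\infty$ and $\int uz\le \|u\|_{L^2}\|z\|_{L^2}<\infty$. For (ii), I would expand
\[
\K(z+\varphi)-\K(z)-\integral\bigl\{z_x\varphi_x+\gamma z\varphi+z^3\varphi-u\varphi\bigr\}\,dx
=\integral\Bigl\{\tfrac12\varphi_x^2+\tfrac\gamma2\varphi^2+\tfrac32 z^2\varphi^2+z\varphi^3+\tfrac14\varphi^4\Bigr\}dx,
\]
and bound the right-hand side by $C(\|z\|_{H^1})\|\varphi\|_{H^1}^2$, using Lemma~\ref{u_embedding}(i) to estimate e.g.\ $\int z^2\varphi^2\le\|z\|_{L^\infty}^2\|\varphi\|_{L^2}^2\le 2\|z\|_{H^1}^2\|\varphi\|_{H^1}^2$ and similarly for the cubic and quartic remainders, which gives $o(\|\varphi\|_{H^1})$.

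For (iii), I would apply the direct method. First, using $\int uz\le \frac{\gamma}{4}\|z\|_{L^2}^2+\frac{1}{\gamma}\|u\|_{L^2}^2$, $\K$ is bounded below and coercive on $H^1(0,\infty)$; so a minimizing sequence $\{v_n\}$ is bounded in $H^1$ and, up to a subsequence, $v_n\rightharpoonup v$ in $H^1$. The terms $\int v_{n,x}^2$ and $\int v_n^2$ are weakly lower semicontinuous by standard Hilbert space arguments; $\int uv_n\to\int uv$ because $u\in L^2$ and $v_n\rightharpoonup v$ in $L^2$; and since the map $z\mapsto \int z^4$ is convex and strongly continuous on $L^4$ (and $v_n$ is bounded in $L^4$ via $L^2\cap L^\infty$), it is weakly lsc. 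Thus $v$ is a minimizer and the Euler--Lagrange identity in (ii) gives the weak form of (\ref{FN_nonlinear_steady}b). To upgrade regularity I would set $p:=v^2$; by Lemma~\ref{u_embedding}(ii) we have $p\in H^1(0,\infty)$ with $p\ge 0$, and the weak form is exactly \eqref{v_and_f} with $f=u\in H^1$, so Lemma~\ref{v_embedding} gives $v\in H^3(0,\infty)$ and the PDE holds a.e.

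For (iv), I would take two weak solutions $v_1,v_2$, set $w=v_1-v_2$, and test the difference of the weak forms with $\varphi=w$:
\[
\integral\bigl\{w_x^2+\gamma w^2+(v_1^3-v_2^3)w\bigr\}\,dx=0.
\]
Factoring $(v_1^3-v_2^3)w=w^2(v_1^2+v_1v_2+v_2^2)\ge 0$ pointwise (since $v_1^2+v_1v_2+v_2^2=(v_1+v_2/2)^2+3v_2^2/4\ge 0$), every integrand is nonnegative, forcing $w\equiv 0$. I expect the main obstacle to be step (iii): securing weak lower semicontinuity of the quartic term on the unbounded half-line, which I handle via the convexity/$L^4$-boundedness argument rather than via any compactness that fails on $(0,\infty)$; once this is in place and Lemma~\ref{v_embedding} is invoked with the nontrivial choice $p=v^2\in H^1$, the rest is routine.
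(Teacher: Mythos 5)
Your proof is correct and follows the same route as the paper's: (i) via the $L^\infty$ embedding from Lemma~\ref{u_embedding}, (ii) being dismissed as ``standard'' in the paper while you supply the explicit quadratic remainder estimate, (iii) via coercivity and convexity/weak lower semicontinuity followed by the regularity bootstrap $p=v^2\in H^1$ fed into Lemma~\ref{v_embedding}, and (iv) via the algebraic nonnegativity $v_1^2+v_1v_2+v_2^2\geq 0$. The only difference is presentational---you spell out the direct method term by term (gradient, $L^2$, quartic, and linear pieces separately) where the paper simply cites convexity and coercivity of $\K$, which amounts to the same argument.
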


\begin{proof}
For any $z \in H^1(0, \infty)$, it follows from Lemma~\ref{u_embedding} that $\|z\|_{L^{\infty}(0, \infty)} \leq \sqrt{2}\,\|z\|_{H^1(0, \infty)}$. Therefore
\begin{equation*}
 \lvert \K(z) \rvert  \leq  \frac{1}{2} \max\{1, \gamma\} \|z\|^2_{H^1} + \frac{1}{4}\|z\|^2_{L^\infty} \|z\|^2_{L^2} + \|u\|_{L^2}\,\|z\|_{L^2} < \infty. 
\end{equation*}
This completes the proof of (i). Statement (ii) is standard. As a consequence of convexity and coercivity of $\K$, a minimizer $v \in H^1(0, \infty)$ exists and $\K^{\prime}(v)\varphi = 0$ for all $\varphi \in H^1(0, \infty)$. Observe that with $p:= v^2 \in H^1(0,\infty)$, it follows from 
Lemma~\ref{v_embedding} that 
$v \in H^3$.
These prove (iii).
Next, suppose $v_1$ and $v_2$ are weak solutions of (\ref{FN_nonlinear_steady}b) with $v_1 \neq v_2$. Then
\begin{equation*}
\integral \{ (v_1 - v_2)_x\varphi_x + \gamma (v_1 - v_2)\varphi + (v_1^3 - v_2^3)\varphi \} \, dx = 0, \quad \text{$\forall \varphi \in H^1(0, \infty)$}.
\end{equation*}
By choosing $\varphi= v_1 - v_2,$ we have
\[ \integral \{ (v_1 - v_2)^2_x + \gamma (v_1 - v_2)^2 + (v_1^2 + v_1v_2 + v_2^2)(v_1 -v_2)^2 \} \, dx = 0. \]
From $(v_1^2 + v_1v_2 + v_2^2) \geq 0$, it is clear that $v_1 - v_2 = 0$ as desired in (iv).
\end{proof}

\begin{lem} \label{natural_bndry}
If $v$ is a critical point of $\K$ defined in Lemma~\ref{v_variation}, then $v_{x}(0) = 0$.
\end{lem}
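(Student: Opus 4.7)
The plan is to derive the Neumann condition as a natural boundary condition from the variational equation satisfied by the critical point. Since $\mathcal{K}$ is defined on $H^1(0,\infty)$ with no boundary condition imposed at $x=0$, the Euler--Lagrange equation should produce $v_x(0)=0$ automatically via integration by parts with a test function that does not vanish at the origin.

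First I would invoke Lemma~\ref{v_variation}(iii) to upgrade the regularity: a critical point $v$ lies in $H^3(0,\infty)$ and satisfies $v_{xx}-\gamma v - v^3+u=0$ a.e. In particular $v_x \in H^2(0,\infty)$, so Lemma~\ref{u_embedding}(i) gives $v_x(x)\to 0$ as $x\to\infty$, and every $\varphi\in H^1(0,\infty)$ likewise decays to $0$ at infinity. This justifies dropping the boundary term at infinity when I integrate by parts.

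Next, for an arbitrary $\varphi \in H^1(0,\infty)$, I rewrite the critical point condition
\[
\integral\left\{v_x\varphi_x+\gamma v\varphi+v^3\varphi-u\varphi\right\}\,dx=0
\]
by integrating the first term by parts, obtaining
\[
-v_x(0)\varphi(0)+\integral(-v_{xx}+\gamma v+v^3-u)\varphi\,dx=0.
\]
Since $v$ solves the PDE pointwise a.e., the integral vanishes, leaving $v_x(0)\varphi(0)=0$. Choosing a specific test function with $\varphi(0)\neq 0$ (for instance $\varphi(x)=e^{-x}\in H^1(0,\infty)$) yields $v_x(0)=0$.

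I do not expect a real obstacle here. The only mildly delicate point is justifying the boundary term at infinity, which is handled cleanly by the $H^3$ regularity from Lemma~\ref{v_variation}(iii) together with the decay statement in Lemma~\ref{u_embedding}(i); everything else is a standard natural-boundary-condition argument.
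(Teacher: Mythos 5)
Your proof is correct and follows essentially the same route as the paper: integrate the critical-point condition by parts, use the fact that $v$ satisfies the ODE a.e.\ to kill the integral term, and conclude $v_x(0)\varphi(0)=0$ for a test function with $\varphi(0)\neq 0$. The only cosmetic difference is that the paper uses compactly supported test functions $\varphi\in C^\infty[0,\infty)$ to sidestep the boundary term at infinity, whereas you allow general $\varphi\in H^1(0,\infty)$ and justify that term via the $H^3$ regularity and decay; both are fine.
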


\begin{proof}
A critical point $v \in H^2(0, \infty)$ of $\K$ satisfies
\begin{align*}
0 = \K'(v)\varphi &= \integral \left \{ v_x\varphi_x + \gamma v \varphi + v^3\varphi - u\varphi \right \} \, dx \\ 
&= \integral \{ -v_{xx} + \gamma v + v^3 - u \} \varphi \, dx \, - \, v_{x}(0)\varphi(0)
\end{align*}
for all compactly supported $\varphi \in C^{\infty}[0, \infty)$. Since we know from Lemma~\ref{v_variation} that $v$ satisfies (\ref{FN_nonlinear_steady}b) a.e., we have $v_{x}(0)\varphi(0) = 0$
for any arbitrary $\varphi(0)$. Therefore, $v_{x}(0) = 0$.
\end{proof}

\begin{remark} 
The property in Lemma~\ref{natural_bndry} is well known and often referred to as a natural boundary condition. 
\end{remark}

Suppose $u \in H^1(0,\infty)$ and let $v \in H^3(0,\infty)$ be the unique minimizer of $\K$ in Lemma~\ref{v_variation}. Then
we write  $v := \N u$ so that $\N : H^1(0,\infty) \to H^3(0,\infty)$ and 
$v_x(0)=0$. We remark that $u \in C^{1/2}[0, \infty)$ and $v \in C^{2+1/2}[0, \infty)$ by the Sobolev embedding and therefore $(u_0, v_0)$ satisfies (\ref{FN_nonlinear_steady}b) in a classical sense. Finding a symmetric solution to the system $\eqref{FN_nonlinear_steady}$ becomes equivalent to 
studying the integral-differential equation
\begin{equation*} 
du_{xx} + f(u) - \N u = 0
\end{equation*}
with boundary condition $u_x(0)=0$.
Before closing this section, we present some properties of the nonlinear operator $\N$ that will be used throughout this paper.

\begin{lem} \label{w_and_Nw}
For any $w \in H^1$, 
\begin{equation}
\|\N w \|_{H^1(0, \infty)} \leq \max\{1, 1/\gamma\}\|w\|_{L^2(0, \infty)}.
\end{equation}
\end{lem}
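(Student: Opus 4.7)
The plan is to test the weak formulation of the equation $v_{xx}-\gamma v - v^3 + w = 0$ (satisfied by $v = \N w$ per Lemma~\ref{v_variation}) against the solution $v$ itself, and then exploit the sign of the cubic term.

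Concretely, since $v = \N w \in H^3(0,\infty) \subset H^1(0,\infty)$ is a weak solution of (\ref{FN_nonlinear_steady}b) with source $w$, choosing $\varphi = v$ in the weak formulation stated in Lemma~\ref{v_variation}(iii) yields
\begin{equation*}
\integral \left\{ v_x^2 + \gamma v^2 + v^4 \right\} dx \;=\; \integral w\,v \, dx.
\end{equation*}
Because $v^4 \geq 0$, the left-hand side is bounded below by $\min\{1,\gamma\} \, \|v\|_{H^1}^2$. Meanwhile the Cauchy--Schwarz inequality bounds the right-hand side by $\|w\|_{L^2}\|v\|_{L^2} \leq \|w\|_{L^2}\|v\|_{H^1}$. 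Dividing through by $\|v\|_{H^1}$ (trivial if $v \equiv 0$) and using $1/\min\{1,\gamma\} = \max\{1,1/\gamma\}$ gives the stated bound.

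There is no real obstacle here: the nonlinearity $v^3$ is cooperative in the sense that $v \cdot v^3 = v^4 \geq 0$, so it can simply be discarded from the lower bound. The only mild subtlety is justifying that $v = \N w$ is admissible as a test function, which follows at once from $\N w \in H^3 \subset H^1$ as noted in the paragraph preceding the lemma.
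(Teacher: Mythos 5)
Your proof is correct and is essentially the paper's argument: the paper also multiplies (\ref{FN_nonlinear_steady}b) by $\N w$ and integrates by parts to get the identity $\integral \{(\N w)^{\prime\,2} + \gamma (\N w)^2 + (\N w)^4\}\,dx = \integral w\,\N w\,dx$, then discards the nonnegative quartic term. You have merely written out the Cauchy--Schwarz step that the paper leaves implicit with ``the result follows.''
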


\begin{proof}
Multiplying (\ref{FN_nonlinear_steady}b) through by $\N w$ and integrating by parts,
\[ \integral \{(\N w)^{\prime \, 2} + \gamma (\N w)^2 + (\N w)^4 \} \, dx = \integral w\N w \, dx \]
and the result follows.
\end{proof}

The next lemma shows that $\N$ is Frech\'et differentiable. Its derivative will be denoted by $\N'$.
\begin{lem} 
The nonlinear map ${\N}$ is Frech\'et differentiable. To be precise,
given any $w \in H^1(0, \infty)$ and $v = \N w$, we have $\N^{\prime}(w): H^1(0, \infty) \ra H^3(0, \infty)$ such that for any given $\hat{w} \in H^1(0, \infty)$
\begin{equation*}
\hat{v} = \N^{\prime}(w)\hat{w}
\end{equation*}
is the unique solution in $H^3(0, \infty)$ of
\begin{equation} \label{v_hat1}
\hat{v}^{\prime\prime} - \gamma \hat{v} - 3v^2\hat{v} = -\hat{w}
\end{equation}
with $v^{\prime}(0) =0$.
\end{lem}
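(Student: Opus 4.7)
The plan is to split the proof into two stages: first constructing the candidate derivative $\hat v$, then verifying that it truly captures the first-order behaviour of $\mathcal{N}$.

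For the first stage, since $v = \mathcal{N}w \in H^3(0,\infty)$ we have $v \in H^1(0,\infty)$, so Lemma~\ref{u_embedding}(ii) yields $v^2 \in H^1(0,\infty)$ with $v^2 \geq 0$. Hence \eqref{v_hat1} equipped with the natural boundary condition $\hat v'(0)=0$ falls under the scope of Lemma~\ref{v_embedding} with $p := 3v^2$ and $f := \hat w$. Existence can be obtained either from Lax--Milgram applied to the weak form, or (parallel to Lemma~\ref{v_variation}) by minimising the coercive quadratic functional $\int \{ z_x^2/2 + \gamma z^2/2 + 3v^2 z^2/2 - \hat w z\}\, dx$ over $H^1(0,\infty)$; uniqueness follows from the coercivity of the bilinear form. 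Lemma~\ref{v_embedding} then promotes the solution to $H^3(0,\infty)$ with $\|\hat v\|_{H^3} \leq C_0\,\|\hat w\|_{H^1}$, so $\hat w \mapsto \hat v$ is a bounded linear map from $H^1$ to $H^3$. Call it $\mathcal{N}'(w)$.

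For the second stage, set $\tilde v := \mathcal{N}(w+\hat w) - v$ and subtract the two instances of (\ref{FN_nonlinear_steady}b). Expanding $(v+\tilde v)^3 - v^3 = 3v^2 \tilde v + 3v \tilde v^2 + \tilde v^3$, the function $\tilde v$ solves $\tilde v'' - \gamma \tilde v - 3v^2 \tilde v = 3v\tilde v^2 + \tilde v^3 - \hat w$, and subtracting \eqref{v_hat1} gives
\begin{equation*}
(\tilde v - \hat v)'' - \gamma(\tilde v - \hat v) - 3v^2(\tilde v - \hat v) = 3v\tilde v^2 + \tilde v^3, \qquad (\tilde v - \hat v)'(0) = 0.
\end{equation*}
To apply Lemma~\ref{v_embedding} to this equation I first need an a priori $H^1$ bound on $\tilde v$. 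Testing the equation for $\tilde v$ against $\tilde v$ itself and using the monotonicity identity $(a^3-b^3)(a-b) = (a-b)^2(a^2+ab+b^2) \geq 0$ with $a = v+\tilde v$, $b = v$, the cubic contribution is non-negative, so
\begin{equation*}
\min\{1,\gamma\}\,\|\tilde v\|_{H^1}^2 \;\leq\; \integral \hat w\,\tilde v\, dx \;\leq\; \|\hat w\|_{L^2}\|\tilde v\|_{L^2},
\end{equation*}
yielding $\|\tilde v\|_{H^1} \leq \max\{1, 1/\gamma\}\,\|\hat w\|_{L^2}$.

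Now Lemma~\ref{v_embedding} applied with $p = 3v^2$ and right-hand side $-(3v\tilde v^2 + \tilde v^3)$, combined with two applications of Lemma~\ref{u_embedding}(ii), gives
\begin{equation*}
\|\tilde v - \hat v\|_{H^3} \;\leq\; C_0\,\bigl\| 3v\tilde v^2 + \tilde v^3 \bigr\|_{H^1} \;\leq\; C\bigl(\|v\|_{H^1}\|\tilde v\|_{H^1}^2 + \|\tilde v\|_{H^1}^3\bigr),
\end{equation*}
where $C$ depends only on $\gamma$ and $\|v\|_{H^1}$. Feeding in the $H^1$ bound on $\tilde v$, the right-hand side is $O(\|\hat w\|_{H^1}^2)$, which is exactly the statement that $\mathcal{N}(w+\hat w) = \mathcal{N}(w) + \mathcal{N}'(w)\hat w + o(\|\hat w\|_{H^1})$ in $H^3(0,\infty)$. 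The main technical point is the convexity-based $H^1$ estimate for $\tilde v$: without it the cubic remainder would only be $o(1)$, not $O(\|\hat w\|_{H^1}^2)$; once that bound is in hand, the $H^1$-algebra and Lemma~\ref{v_embedding} make the rest mechanical.
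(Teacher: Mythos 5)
Your proof is correct and follows essentially the same route as the paper: construct the candidate derivative $\hat v$ as the solution of the linearized equation (via Lemma~\ref{v_embedding} and a coercivity argument), then estimate $\tilde v - \hat v$ in $H^3$ by first getting the a priori $H^1$ bound $\|\tilde v\|_{H^1} \leq \max\{1, 1/\gamma\}\|\hat w\|_{L^2}$ and then feeding $3v\tilde v^2 + \tilde v^3$ through Lemmas~\ref{u_embedding} and \ref{v_embedding}. The only cosmetic difference is how you see that the cubic term is dissipative when testing against $\tilde v$: you invoke the monotonicity factorization $(a^3 - b^3)(a - b) \geq 0$, while the paper completes the square in $\gamma\tilde{v} + 3v^2\tilde{v} + 3v\tilde{v}^2 + \tilde{v}^3 = \bigl(\gamma + (\tilde{v}+3v/2)^2 + 3v^2/4\bigr)\tilde{v}$ — the same non-negativity, displayed differently.
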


\begin{proof}
Fix $w \in H^1(0, \infty)$ and set $v = \N w$. Given $\hat{w} \in H^1(0, \infty)$, let $A: H^1(0, \infty) \ra H^3(0, \infty)$ be a map such that $\hat{v} = A\hat{w}$ is 
the unique $H^3$ solution of \eqref{v_hat1}. The existence of $A$ is guaranteed by using a similar variational argument as in Lemma~\ref{v_variation}, resulting  a $\hat{v}$ satisfying $\hat{v}^{\prime}(0)=0$.
 We claim that $A = \N^{\prime}(w)$. It is clear that $A$ is linear. With $\hat{v}^{\prime\prime} = \gamma\hat{v} + 3v^2\hat{v} - \hat{w}$, it follows 
 from Lemma~\ref{v_embedding} that there exists a constant $C_0 = C_0(\gamma, \|3 v^2 \|_{H^1})$ such that 
 $\| \hat{v} \|_{H^3} \leq C_0 \| \hat{w} \|_{H^1}$. Hence $A$ is a bounded operator. To finish our proof, it suffices to check that 
\begin{equation} \label{N_differentiable}
\| \N(w + \hat{w}) - \N w - A\hat{w}  \|_{H^3}= o(\|\hat{w}\|_{H^1})
\end{equation}
for any $\hat{w} \in H^1$ with norm at most $1$. Let $\tilde{v} = \N(w+ \hat{w}) - \N w$. Since both $(w + \hat{w}, v+ \tilde{v})$ and $(w, v)$ satisfy (\ref{FN_nonlinear_steady}b), we have
\begin{equation*}
\begin{cases}
(v+ \tilde{v})^{\prime\prime} -\gamma (v + \tilde{v}) - (v + \tilde{v})^3 = -(w+ \hat{w}), \\
\phantom{(v+\tilde{v})^{\prime\prime} - \gamma(+\tilde{v}} \,\,\, v^{\prime\prime}- \gamma v - v^3 = -w.
\end{cases}
\end{equation*}
Subtracting from one another yields
\begin{equation} 
\tilde{v}^{\prime\prime} - \gamma\tilde{v} - 3v^2\tilde{v} - 3v\tilde{v}^2 - \tilde{v}^3 = -\hat{w}, \label{v_hat}
\end{equation}
and we subtract (\ref{v_hat1}) from (\ref{v_hat}) to get
\begin{equation*} \label{w_xx}
(\tilde{v}-\hat{v})^{\prime\prime} - (\gamma + 3v^2) (\tilde{v} - \hat{v}) = \tilde{v}^3 + 3v\tilde{v}^2.
\end{equation*}
By applying Lemma~\ref{v_embedding} and the estimate from Lemma~\ref{u_embedding}, there exists a positive constant $C_0 =C_0 (\gamma,\| 3v^2 \|_{H^1})$ such that
\begin{equation*}
\|\tilde{v} - \hat{v}\|_{H^3} \leq \sqrt{5} \, C_0\|\tilde{v}+3v\|_{H^1}\|\tilde{v}\|^2_{H^1} \;.
\end{equation*}
Since $\gamma\tilde{v} + 3v^2\tilde{v} + 3v\tilde{v}^2 + \tilde{v}^3 = \left( \gamma+ (\tilde{v}+ 3v/2)^2 + 3 v^2/4    \right)  \tilde{v}$,
the weak formulation of \eqref{v_hat} implies $\|\tilde{v}\|_{H^1} \leq \max\{1, 1/\gamma\}\|\hat{w}\|_{H^1}$. Together with $\|v\|_{H^1} \leq \max\{1, 1/\gamma\} \|w \|_{H^1}$ from Lemma~\ref{w_and_Nw}, we finally have 
\begin{align*}
\|\tilde{v} - \hat{v}\|_{H^3} &\leq \sqrt{5} \, C_0 \left( \|\tilde{v}\|_{H^1} + \|3v\|_{H^1}\right)\|\tilde{v}\|^2_{H^1} \\
&\leq \sqrt{5} \, C_0\max\{1, 1/\gamma^3 \}\left(\|\hat{w}\|_{H^1} +3\|w\|_{H^1} \right)\|\hat{w}\|^2_{H^1} \\
&\leq C_1 \|\hat{w}\|^2_{H^1}
\end{align*}
for some $C_1 =C_1(\gamma,  \|w\|_{H^1}),$ which implies (\ref{N_differentiable}) as desired.
\end{proof}

\begin{lem} \label{N_increasing}
Suppose $w_1, w_2 \in H^1(0,\infty)$ are distinct with $w_1 \geq w_2$, then $\N w_1 > \N w_2$.
\end{lem}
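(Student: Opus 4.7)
The plan is to set $v_i := \N w_i \in H^3(0,\infty)$ and study the difference $V := v_1 - v_2$. Subtracting the weak formulations of (\ref{FN_nonlinear_steady}b) for $v_1$ and $v_2$, and using the identity $v_1^3 - v_2^3 = q_0 V$ with $q_0 := v_1^2 + v_1 v_2 + v_2^2 \geq 0$ (the same factorization used in the proof of Lemma~\ref{v_variation}(iv)), $V$ satisfies
\begin{equation*}
\integral \bigl( V_x \varphi_x + q\, V \varphi \bigr)\, dx = \integral (w_1 - w_2)\varphi\, dx, \qquad \forall \, \varphi \in H^1(0,\infty),
\end{equation*}
where $q := \gamma + q_0 \geq \gamma > 0$, together with the natural boundary condition $V_x(0) = 0$ from Lemma~\ref{natural_bndry}.

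I would first show $V \geq 0$. Testing against $\varphi = V^- := \max(-V,0) \in H^1(0,\infty)$ and using the a.e.~identities $V_x (V^-)_x = -(V^-_x)^2$ and $V\cdot V^- = -(V^-)^2$ gives
\begin{equation*}
-\integral \bigl( (V^-_x)^2 + q\,(V^-)^2 \bigr)\, dx = \integral (w_1 - w_2)\, V^-\, dx \geq 0,
\end{equation*}
the last inequality using $w_1 \geq w_2$ and $V^- \geq 0$. Since $q \geq \gamma > 0$, the left side is nonpositive, so both sides vanish, whence $V^- \equiv 0$ and $V \geq 0$ on $[0,\infty)$.

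The final step is to upgrade to strict positivity. Since $w_1 \neq w_2$, the function $V$ is not identically zero: otherwise the pointwise equation $V_{xx} - q V = -(w_1 - w_2)$ would force $w_1 \equiv w_2$. Rewriting as $-V_{xx} + q V = w_1 - w_2 \geq 0$, we see $V$ is a nontrivial, nonnegative supersolution for the operator $L := -\partial_x^2 + q$ with $q \geq 0$. The strong maximum principle then precludes $V$ from vanishing at any point of $(0,\infty)$, while at $x=0$ the Neumann condition $V_x(0) = 0$ combined with Hopf's lemma excludes $V(0)=0$. Therefore $V > 0$ on $[0,\infty)$, i.e.\ $\N w_1 > \N w_2$.

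The only delicate point I foresee is the boundary application of the maximum principle; this can be bypassed by extending $V$, $q$, and $w_1 - w_2$ by even reflection across $x=0$ (the extension is $C^1$ because $V_x(0)=0$, and $C^2$ by the ODE), so that $x=0$ becomes an interior point of $\R$ and the classical interior strong maximum principle applies directly.
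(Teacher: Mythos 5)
Your proof is correct and takes essentially the same route as the paper: subtract the two equations, factor $v_1^3-v_2^3=(v_1^2+v_1v_2+v_2^2)(v_1-v_2)$ so that $V=\N w_1-\N w_2$ satisfies a linear elliptic inequality with nonnegative zeroth-order coefficient, and conclude via the strong maximum principle plus the Hopf lemma (or even reflection) at $x=0$ where $V_x(0)=0$. The only difference is cosmetic: you first establish $V\geq 0$ by testing the weak formulation with $V^-$ before invoking the strong maximum principle, whereas the paper excludes an interior non-positive minimum of $V$ directly.
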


\begin{proof}
Let $w_1, w_2 \in H^1(0, \infty)$ with $w_1 \geq w_2$. Then $v_1 = \N w_1$ and $v_2 = \N w_2$ satisfy $v_1^{\prime\prime} - \gamma v_1 - v_1^3 + w_1 = 0$ and $v_2^{\prime\prime} - \gamma v_2 - v_2^3 + w_2 = 0$, respectively. By subtracting the two equations, we obtain 
\begin{align} 
(v_1 - v_2)^{\prime\prime} - \gamma (v_1 - v_2) - (v_1^2 + v_1v_2 + v_2^2)(v_1 -v_2) &= -(w_1 - w_2) \leq 0. \label{max_argument}
\end{align}
Let $z= v_1 - v_2 \in H^1(0, \infty)$. Then $z^{\prime}(0) = 0$ and $z \ra 0$ as $x \ra \infty$. Since $v_1^2+v_1v_2+v_2^2 \geq 0$, the maximum principle 
is applicable to \eqref{max_argument} and $z$ cannot attain an interior non-positive minimum unless $z \equiv 0$. The last 
possibility is excluded as $w_1$ and $w_2$ are distinct.

Suppose $z(0) \leq 0$, then $z'(0)>0$ as a result of the Hopf lemma. This is a contradiction and hence $z(0)>0$.
Coupled with the absence of  a non-positive interior minimum point, we see that $z>0$ everywhere and the proof of the lemma is complete.
\end{proof}

\begin{lem} \label{difference}
Suppose $w_1, w_2 \in L^2(0,\infty)$, then 
\[
\| \N w_2 - \N w_1 \|_{H^1(0,\infty)} \leq \max\{1, 1/\gamma\}\|w_2 - w_1 \|_{L^2(0, \infty)}.
\]
\end{lem}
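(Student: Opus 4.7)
The plan is to imitate the computation in Lemma~\ref{w_and_Nw}, but carried out for the difference of two inhibitor solutions. Let $v_i = \N w_i$ for $i=1,2$, so each $v_i \in H^3(0,\infty)$ satisfies
\[
v_i'' - \gamma v_i - v_i^3 + w_i = 0, \qquad v_i'(0)=0, \qquad v_i(x)\to 0 \text{ as } x\to\infty.
\]
Subtracting the two equations and setting $z = v_2 - v_1$, I would use the algebraic identity
\[
v_2^3 - v_1^3 = (v_1^2 + v_1 v_2 + v_2^2)\, z
\]
to obtain
\[
z'' - \gamma z - (v_1^2 + v_1 v_2 + v_2^2)\, z = -(w_2 - w_1).
\]

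Next, I would multiply through by $z$, integrate over $(0,\infty)$, and integrate by parts. The boundary contribution $z\,z'|_0^\infty$ vanishes because $z'(0)=0$ (from the natural boundary condition in Lemma~\ref{natural_bndry}) and $z,z'\to 0$ at infinity (which follows from $v_i\in H^3$, hence $v_i,v_i'\to 0$). This yields
\[
\int_0^\infty \!\bigl\{ (z')^2 + \gamma z^2 + (v_1^2 + v_1 v_2 + v_2^2)\, z^2 \bigr\} \, dx \;=\; \int_0^\infty (w_2-w_1)\, z \, dx.
\]
Since $v_1^2 + v_1 v_2 + v_2^2 \geq 0$, the third integrand on the left is nonnegative and may be discarded, leaving
\[
\min\{1,\gamma\}\, \|z\|_{H^1}^2 \;\leq\; \int_0^\infty \!\bigl\{ (z')^2 + \gamma z^2 \bigr\}\, dx \;\leq\; \|w_2-w_1\|_{L^2}\, \|z\|_{L^2}
\]
by Cauchy--Schwarz. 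Dividing by $\|z\|_{H^1}$ and rearranging gives the desired bound with constant $\max\{1,1/\gamma\}$.

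I do not expect any genuine obstacle here; the cubic nonlinearity, which could have been awkward, disappears gracefully thanks to the monotone factorization $v_2^3-v_1^3=(v_1^2+v_1v_2+v_2^2)z$, the same device already exploited in the uniqueness proof of Lemma~\ref{v_variation} and in Lemma~\ref{N_increasing}. The only minor care point worth flagging is that the statement assumes $w_1,w_2\in L^2(0,\infty)$ while $\N$ was originally defined on $H^1(0,\infty)$; if one wishes to make sense of $\N w_i$ for $L^2$ data, the estimate just derived (together with completeness) shows that $\N$ extends uniquely by density from $H^1$ to $L^2$ as a Lipschitz map into $H^1$, so interpreting $\N w_i$ in this extended sense causes no difficulty.
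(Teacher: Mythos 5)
Your proof is correct and follows essentially the same route as the paper's: the paper's one-line proof simply refers back to the identity in Lemma~\ref{N_increasing} (the algebraic factorization $v_2^3 - v_1^3 = (v_1^2 + v_1 v_2 + v_2^2)(v_2-v_1)$), multiplies by $v_1 - v_2$, and integrates over $(0,\infty)$ — exactly the energy estimate you carry out. Your observation about the mismatch between the stated hypothesis $w_i \in L^2(0,\infty)$ and the original domain $H^1(0,\infty)$ of $\N$ is a legitimate point that the paper leaves implicit; your density/Lipschitz-extension resolution is a fine way to close that small gap.
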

\begin{proof}
 The same proof as in Lemma~\ref{N_increasing} leads us to \eqref{max_argument} (but without $\leq 0$ at the end). 
Now multiply by $v_1-v_2$ and integrate over the interval $(0,\infty)$. 
\end{proof}

\section{A variational formulation} \label{sec_varForm}
\setcounter{equation}{0}
\renewcommand{\theequation}{\thesection.\arabic{equation}}

In this section, we introduce a variational formulation that corresponds to the system \eqref{FN_nonlinear_steady} or, equivalently, to
\begin{equation}
du^{\prime\prime} + f(u) - \N u = 0.	 \label{EL_eqn}
\end{equation}
Consider the functional $\hat{J}: H^1(0, \infty) \ra \R$ defined by 
\begin{equation*}
\hat{J}(w) = \integral \Big \{ \frac{d}{2}w^{\prime2} + \frac{1}{2}w\N w + F(w) + \frac{1}{4}(\N w)^4 \Big \} \, dx, 
\end{equation*}
where 
\begin{equation*}
F(\xi) = -\int_{0}^{\xi} f(\eta) \, d\eta = \frac{\xi^4}{4}-\frac{(1+\beta)\xi^3}{3} +  \frac{\beta\xi^2}{2}.
\end{equation*}
Let $0 < \beta_1 < 1 < \beta_2$ such that $F(\beta_1) = F(\beta_2) = 0$. We will first verify that \eqref{EL_eqn} is the Euler-Lagrange equation associated with $\hat{J}$.

\begin{lem} 
The functional $\hat{J}$ is well defined for all $w \in H^1(0, \infty)$.
\end{lem}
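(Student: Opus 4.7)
The plan is to show that each of the four terms in the integrand of $\hat{J}$ lies in $L^1(0,\infty)$, so that $\hat J(w)$ is a finite real number. The two main tools are the embedding estimate $\|z\|_{L^\infty(0,\infty)} \leq \sqrt{2}\,\|z\|_{H^1(0,\infty)}$ from Lemma~\ref{u_embedding}(i) and the bound $\|\N w\|_{H^1} \leq \max\{1,1/\gamma\}\|w\|_{L^2}$ from Lemma~\ref{w_and_Nw}; together these embed everything comfortably into $L^2 \cap L^\infty$.

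First I would handle the easy pieces. The gradient term $\frac{d}{2}(w')^2$ is in $L^1$ directly from $w \in H^1$. For the bilinear nonlocal term $\frac{1}{2} w\,\N w$, I would apply Cauchy--Schwarz and Lemma~\ref{w_and_Nw}:
\[
\Big| \int_0^\infty w\,\N w \, dx \Big| \leq \|w\|_{L^2}\|\N w\|_{L^2} \leq \max\{1,1/\gamma\}\|w\|_{L^2}^2.
\]
For the quartic nonlocal term $\frac{1}{4}(\N w)^4$, I would use $\N w \in H^1$ and the embedding of Lemma~\ref{u_embedding}(i) applied to $\N w$ to write
\[
\int_0^\infty (\N w)^4 \, dx \leq \|\N w\|_{L^\infty}^2 \|\N w\|_{L^2}^2 \leq 2\,\|\N w\|_{H^1}^4,
\]
which is finite by Lemma~\ref{w_and_Nw}.

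Finally, for $F(w) = w^4/4 - (1+\beta)w^3/3 + \beta w^2/2$, I would estimate term by term using $w \in H^1 \subset L^2 \cap L^\infty$: the $w^2$ contribution is bounded by $\|w\|_{L^2}^2$, while $\int |w|^3 \leq \|w\|_{L^\infty}\|w\|_{L^2}^2$ and $\int w^4 \leq \|w\|_{L^\infty}^2 \|w\|_{L^2}^2$, each of which is controlled by a power of $\|w\|_{H^1}$ thanks to Lemma~\ref{u_embedding}(i). Combining these four bounds yields $|\hat J(w)| < \infty$ for every $w \in H^1(0,\infty)$.

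There is no substantive obstacle here; this lemma is essentially a bookkeeping exercise verifying that the embeddings and the $H^1$ control on $\N w$ already established in Section~\ref{sec_prelim} are strong enough to make every term integrable. The only mild subtlety is remembering to treat $\N w$ via its $H^1$ (rather than $L^2$) norm when bounding the quartic term, since one needs both an $L^\infty$ and an $L^2$ bound on $\N w$ simultaneously.
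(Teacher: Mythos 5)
Your proposal is correct and follows essentially the same route as the paper: both rely on Lemma~\ref{u_embedding}(i) and Lemma~\ref{w_and_Nw} to control each term, with the only cosmetic difference being that the paper bounds $F(w)$ by $C_w w^2$ with $C_w$ depending on $\|w\|_{L^\infty}$ while you estimate the three monomials of $F$ separately. Both are the same bookkeeping argument.
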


\begin{proof}
Let $w \in H^1(0, \infty)$. Then $\| w \|_{L^{\infty}{(0, \infty)}} \leq \sqrt{2} \| w \|_{H^1{(0, \infty)}}$ and
$\|\N w\|_{L^{\infty}(0, \infty)} \leq \sqrt{2}\,\|\N w\|_{H^1(0, \infty)}$ by Lemma~\ref{u_embedding}. For a fixed $w$, there is a positive constant
$C_w$, which depends on $\|w\|_{L^{\infty}(0, \infty)}$, such that $|F(\xi)| \leq C_w\xi^2$ for $|\xi| \leq \|w\|_{L^{\infty}(0, \infty)}$. Together with Lemma~\ref{w_and_Nw}, we obtain
\begin{align*}
|\hat{J}(w)| &\leq \frac{d}{2}\|w\|^2_{H^1} + \frac{1}{2}\|w\|_{L^2}\|\N w\|_{L^2} + C_w\|w\|^2_{L^2} + \frac{1}{4}\|\N w\|^2_{L^\infty}\|\N w\|_{L^2}^2 \\
&< \frac{d}{2}\|w\|^2_{H^1} + \frac{1}{2}\max\{1,1/\gamma\}\|w\|_{L^2}^2 + C_w\|w\|^2_{H^1} + \frac{1}{2}\|\N w\|_{H^1}^4 \\
&< \infty.  
\end{align*} 
\end{proof}

\begin{lem} \label{rewriteF} 
Let $v = \N w$. Then
\[ \integral \Big \{ \frac{1}{4}v^4 + \frac{1}{2}wv  \Big \} \, dx = \integral \Big\{ -\frac{1}{2}v^{\prime2} - \frac{\gamma}{2} v^2 - \frac{1}{4}v^4 + wv  \Big\} \, dx. \]
\end{lem}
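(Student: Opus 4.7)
The plan is to use the inhibitor equation (\ref{FN_nonlinear_steady}b) that $v=\N w$ satisfies, test it against $v$ itself, and integrate by parts. The boundary terms vanish because $v'(0)=0$ by Lemma~\ref{natural_bndry} and because $v\in H^3(0,\infty)$, so both $v$ and $v'$ tend to $0$ at infinity (via Lemma~\ref{u_embedding} applied to $v$ and $v'$).

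Concretely, I would first multiply $v''-\gamma v-v^3+w=0$ by $v$ and integrate over $(0,\infty)$, producing
\[
[vv']_0^\infty-\integral v'^{\,2}\,dx-\gamma\integral v^2\,dx-\integral v^4\,dx+\integral wv\,dx=0.
\]
The boundary term vanishes by the considerations above, leaving the key identity
\[
\integral v^4\,dx=\integral\Big\{-v'^{\,2}-\gamma v^2+wv\Big\}\,dx.
\]
This is essentially the same computation used to bound $\|\N w\|_{H^1}$ in Lemma~\ref{w_and_Nw}, so the regularity and decay needed to justify the boundary evaluation are already in hand.

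Next I would simply rewrite the left-hand side of the claim by inserting this identity. Splitting $\tfrac14 v^4 = \tfrac12 v^4 - \tfrac14 v^4$ and substituting the expression for $\integral v^4$ into the $\tfrac12 v^4$ term gives
\[
\integral\Big\{\tfrac14 v^4+\tfrac12 wv\Big\}dx
=\integral\Big\{\tfrac12(-v'^{\,2}-\gamma v^2+wv)-\tfrac14 v^4+\tfrac12 wv\Big\}dx,
\]
which collapses to the right-hand side of the lemma after combining the two $wv$ contributions. There is no real obstacle here; the argument is a short integration by parts followed by algebraic rearrangement. The only point that deserves a sentence of care is the vanishing of $[vv']_0^\infty$, which relies on both the natural boundary condition at $0$ and the $H^3$-decay at infinity — both already established in Section~\ref{sec_prelim}.
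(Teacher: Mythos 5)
Your proof is correct and follows essentially the same route as the paper: test the inhibitor equation against $v$ itself and rearrange algebraically. The only cosmetic difference is that the paper uses the weak formulation $\K'(v)v=0$ directly (so no boundary term ever appears), whereas you integrate the strong form by parts and then justify the vanishing of $[vv']_0^\infty$ — a harmless extra step that you handle correctly via Lemma~\ref{natural_bndry} and the $H^3$-decay.
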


\begin{proof}
For $(w, v)$ satisfies (\ref{FN_nonlinear_steady}b) weakly, 
\begin{equation*}
\integral \frac{1}{2} \left(-v^{\prime}\varphi^{\prime} - \gamma v\varphi - v^3\varphi + w\varphi \right)  \, dx = {0,} \, \quad \forall \varphi \in H^1(0, \infty).
\end{equation*}
We choose $\varphi = v$ and add $\integral \left( \frac{1}{4}v^4 + \frac{1}{2}wv \right) dx$ on both sides to get the result.
\end{proof}

\begin{lem}
If $u_0 \in H^1(0, \infty)$ is a critical point of $\hat{J}$, then $(u_0, \N u_0)$ is a weak solution of \eqref{FN_nonlinear_steady}.
\end{lem}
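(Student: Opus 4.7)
The plan is to compute $\hat{J}'(u_0)$ explicitly and show that its vanishing amounts to the weak form of (\ref{FN_nonlinear_steady}a). Since by construction $v_0 := \N u_0$ already satisfies (\ref{FN_nonlinear_steady}b) a.e.\ and weakly (Lemma~\ref{v_variation}), obtaining the weak form of the first equation will complete the proof.

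For any test function $\varphi \in H^1(0,\infty)$, set $\hat v := \N'(u_0)\varphi$. By the Fr\'echet differentiability of $\N$ established in the previous lemma, $\hat v \in H^3(0,\infty)$ solves $\hat v'' - \gamma \hat v - 3 v_0^2 \hat v = -\varphi$ with $\hat v'(0)=0$ and decay at infinity. A direct differentiation of $\hat J$, together with $F'(u)=-f(u)$, yields
\[
\hat J'(u_0)\varphi \;=\; \int_0^\infty \left\{ d\,u_0'\varphi' - f(u_0)\varphi + \tfrac12 \varphi\,v_0 + \tfrac12 u_0 \hat v + v_0^3 \hat v \right\} dx.
\]
The essential step is to eliminate the $\hat v$-terms. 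The cleanest route is to first invoke Lemma~\ref{rewriteF}: with $v = \N w$, that lemma rewrites the nonlocal part of $\hat J$ so that
\[
\hat J(w) \;=\; \int_0^\infty \left\{ \tfrac{d}{2} w'^2 + F(w) + w\,v - \tfrac12 (v')^2 - \tfrac{\gamma}{2} v^2 - \tfrac14 v^4 \right\} dx .
\]
Differentiating this reformulation and integrating by parts in the $v_0'\hat v'$ term (the boundary contribution at $x=0$ vanishes by $v_0'(0)=0$, and the contribution at infinity by decay), all $\hat v$-terms collapse into
\[
\int_0^\infty \hat v \left[ v_0'' - \gamma v_0 - v_0^3 + u_0 \right] dx ,
\]
which is zero because $v_0$ satisfies (\ref{FN_nonlinear_steady}b) a.e. Alternatively, without using Lemma~\ref{rewriteF} one may pair the weak equation for $v_0$ against $\hat v$, pair the equation for $\hat v$ against $v_0$, and subtract the two identities; this yields $\tfrac12\int u_0\hat v\,dx + \int v_0^3 \hat v\,dx = \tfrac12\int v_0\varphi\,dx$, which, after substitution, eliminates $\hat v$ with the same outcome.

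In either case one arrives at $\hat J'(u_0)\varphi = \int_0^\infty\{d u_0'\varphi' + v_0\varphi - f(u_0)\varphi\}\,dx$, and the hypothesis $\hat J'(u_0)=0$ forces this integral to vanish for every $\varphi \in H^1(0,\infty)$. This is precisely the weak form of (\ref{FN_nonlinear_steady}a) and simultaneously encodes the natural boundary condition $u_0'(0)=0$. Combined with the weak form of (\ref{FN_nonlinear_steady}b) satisfied by $v_0 = \N u_0$, we conclude that $(u_0, v_0)$ is a weak solution of \eqref{FN_nonlinear_steady}. There is no serious obstacle; the only bookkeeping subtlety is keeping track of the boundary contributions at $x=0$, which are uniformly handled by the natural boundary conditions $v_0'(0)=0$ and $\hat v'(0)=0$ provided by Lemma~\ref{natural_bndry}.
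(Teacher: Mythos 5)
Your proposal is correct, and your primary route (first reformulating $\hat J$ via Lemma~\ref{rewriteF} and then observing that the $\hat v$-dependent terms in $\hat J'$ combine into $\int_0^\infty \hat v\,(v_0'' - \gamma v_0 - v_0^3 + u_0)\,dx = 0$) is precisely what the paper does. Your alternative elimination — pairing the weak equation for $v_0$ against $\hat v$, pairing the linearized equation for $\hat v$ against $v_0$, and subtracting to get $\tfrac12\int u_0\hat v\,dx + \int v_0^3\hat v\,dx = \tfrac12\int v_0\varphi\,dx$ — is a genuinely self-contained variant that bypasses Lemma~\ref{rewriteF} entirely; it buys a slightly more mechanical bookkeeping (no reformulation of $\hat J$ needed) at the cost of having to write down and manipulate two weak identities. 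Both arrive at $\hat J'(u_0)\varphi = \int_0^\infty\{d u_0'\varphi' + v_0\varphi - f(u_0)\varphi\}\,dx$, which is the weak form of (\ref{FN_nonlinear_steady}a), and combined with $v_0 = \N u_0$ solving (\ref{FN_nonlinear_steady}b) by Lemma~\ref{v_variation}, the conclusion follows.
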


\begin{proof}
Given any $w \in H^1(0, \infty)$, define $v = \N w$. By Lemma~\ref{rewriteF} we can write
\[ \hat{J}(w) = \integral \Big \{ \frac{d}{2}w^{\prime2} -\frac{1}{2}v^{\prime2} - \frac{\gamma}{2} v^2 - \frac{1}{4}v^4 + wv + F(w)  \Big\} \, dx.  \]
With $\hat{v} = \N^{\prime}(w)\hat{w}$, the Fr\'echet derivative of $\hat{J}$ is 
\begin{equation} \label{J_derivative}
\hat{J}'(w)\hat{w} = \integral  \Big\{  dw'\hat{w}^{\prime}  - v'\hat{v}^{\prime} - \gamma v \hat{v} - v^3\hat{v} + w\hat{v} + v\hat{w}  - f(w)\hat{w}  \Big\} \, dx.
\end{equation}
Since $\hat{J}^{\prime}(u_0)\hat{w} = 0$ and $v_0 = \N u_0$ satisfies (\ref{FN_nonlinear_steady}b), the equation (\ref{J_derivative}) becomes
\[\integral  \{  d{u}_0^{\prime}\hat{w}^{\prime} - f(u_0)\hat{w}  + v_0\hat{w} \} \, dx = 0,  \]
which implies that $(u_0, v_0)$ satisfies (\ref{FN_nonlinear_steady}a) weakly. 
\end{proof}

\begin{remark} \label{u0_bndry}
The critical point $u_0$ of $\hat{J}$ satisfies the natural boundary condition $u_0^{\prime}(0) = 0$.
\end{remark}

To find a standing pulse solution of \eqref{FN_nonlinear_steady}, we now consider a minimizing problem for $\hat{J}$. Define a class of admissible functions $\A$ as  
\begin{align} \label{admissible_set}
\A \equiv \{ w &\in H^1(0, \infty) : \beta \leq w(0) \leq 1; \text{ there exist } 0 \leq x_1 < x_2 \leq \infty \text{ such that } \beta \leq w \leq 1 \text { on } [0, x_1], \nonumber\\
&0 \leq w \leq \beta \text{ on } (x_1, x_2] \text{ and } -(M + 1) \leq w \leq 0 \text{ on } (x_2, \infty) \},
\end{align}
where $M = M(\gamma)$ is a constant such that $f(\xi) \geq 1 + 1/\gamma$ for all $\xi \leq -M$. We note that the initial condition $\beta \leq w(0) \leq 1$ is vacuous if $x_1 = 0$.
Without any constraint we expect there is no global minimizer of $\hat{J}$, a fact demonstrated in the work of \citet{Chen:2012}.
We therefore restrict our attention to $J \equiv \hat{J}|_{\A}$ for a minimizer. In what follows, let us refer to the terms $\integral \frac{d}{2} w^{\prime \,2}  \,dx$, $\integral F(w) \,dx,$ and $\integral \left( \frac{1}{2}w\N w +\frac{1}{4}(\N w)^4 \right) \,dx$ as the gradient term, potential term, and nonlocal term of $J$, respectively. 

The presence of the nonlocal term imposes a difficulty in showing the existence of a minimizer. To attain a minimizer in the next section, we discuss some estimates of the nonlocal term that will be useful.
\begin{lem} \label{v_upperbnd}
Let $w \in \A$. Then $-(M + 1) \leq \N w \leq 1$.
\end{lem}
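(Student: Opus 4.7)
The plan is to obtain both bounds on $v := \N w$ by a direct maximum-principle argument on the classical equation
\[
v''(x) \;=\; \gamma\,v(x) + v(x)^3 - w(x), \qquad x \in [0,\infty),
\]
which $v$ satisfies pointwise since $v \in H^3(0,\infty) \subset C^2([0,\infty))$ by Lemma~\ref{v_variation}. The relevant boundary data are $v'(0)=0$ from Lemma~\ref{natural_bndry} and $v(x)\to 0$ as $x\to\infty$ from Lemma~\ref{u_embedding}. From the piecewise description of $\A$ in \eqref{admissible_set} I first read off the global pointwise bound $-(M+1)\le w(x)\le 1$ on $[0,\infty)$. The monotone device driving the argument is $g(\xi):=\gamma\xi+\xi^3$, which is strictly increasing on $\mathbb{R}$.

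For the upper bound I look at $\sup_{[0,\infty)} v$. If this supremum is nonpositive the conclusion is immediate. Otherwise, continuity of $v$ together with $v(\infty)=0$ forces the supremum to be attained at some $x_0\in[0,\infty)$, and at such a maximum $v''(x_0)\le 0$ (for the boundary case $x_0=0$ this uses $v'(0)=0$ together with the one-sided Taylor expansion of the $C^2$ function $v$). The equation then yields
\[
g\bigl(v(x_0)\bigr) \;=\; v''(x_0) + w(x_0) \;\le\; w(x_0) \;\le\; 1.
\]
Since $g(1)=\gamma+1>1$ and $g$ is strictly increasing, we conclude $v(x_0)<1$, hence $v\le 1$ everywhere.

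For the lower bound I mirror the argument at $\inf_{[0,\infty)} v$. Either the infimum is nonnegative and we are done, or it is attained at some $x_0$ where $v''(x_0)\ge 0$, yielding
\[
g\bigl(v(x_0)\bigr) \;\ge\; w(x_0) \;\ge\; -(M+1).
\]
By strict monotonicity of $g$, it therefore suffices to verify $g(-(M+1))<-(M+1)$, which rearranges to $(M{+}1)^2+\gamma>1$; this is automatic from $\gamma>0$ and $M\ge 0$. Consequently $v(x_0)>-(M+1)$, completing the proof.

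The only delicate point I foresee is the legitimacy of the second-derivative test at the endpoint $x_0=0$, but that is exactly what the natural boundary condition in Lemma~\ref{natural_bndry} is designed to supply, so no real obstacle arises. No structural estimate on $\N$ beyond the pointwise PDE and the regularity already established is needed.
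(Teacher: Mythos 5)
Your proof is correct and rests on the same idea as the paper's: a maximum-principle comparison of $\N w$ with the constant barriers $1$ and $-(M+1)$, using the global bound $-(M+1)\le w\le 1$ read off from $\A$. The paper executes this as a sub/supersolution argument on the difference $z=\bar{v}-v$ (factoring the cubic and reusing the maximum-principle/Hopf setup after \eqref{max_argument}), whereas you evaluate the equation directly at the extremum and invoke the strict monotonicity of $\xi\mapsto\gamma\xi+\xi^3$; both executions are valid, and your handling of the endpoint case $x_0=0$ via $v'(0)=0$ is exactly right.
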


\begin{proof}
Set $v = \N w$ and $\bar{v} =1$. Since $w \leq 1$, 
\begin{equation*} 
\bar{v}^{\prime\prime} - \gamma \bar{v} - \bar{v}^3 = -\gamma - 1 \leq -w.
\end{equation*}
By subtracting $v^{\prime\prime}  -\gamma v - v^3 = -w$ from above, 
\begin{equation*} 
(\bar{v} - v)^{\prime\prime} - \gamma (\bar{v} - v) - (\bar{v}^2 + \bar{v}v + v^2)(\bar{v} - v) \leq 0. 
\end{equation*}
Let $z = \bar{v} - v$. The same maximum principle argument stated after \eqref{max_argument} enables us to conclude that $z \geq 0$ everywhere, i.e. $v \leq 1$. {Similarly for the lower bound, set $\underline{v} = -(M+1)$ and observe that, since $w \geq -(M + 1)$,
\begin{align*}
(v-\underline{v})^{\prime\prime} - \gamma (v - \underline{v}) - (v^2 + \underline{v}v + \underline{v}^2)(v - \underline{v}) = -(w + \gamma(M +1) + (M+1)^3) \leq 0.
\end{align*}
The argument as before leads to $v - \underline{v} \geq 0$.}
\end{proof}

In next, we use a comparison to obtain an estimate of $\N$. Consider the following linear equations
\begin{equation} \label{linear_eqns}
\begin{cases}
   V^{\prime\prime}  - \gamma V \,\, + w = 0,   \\
   V_0^{\prime\prime} - (\gamma + 1)V_0 + w = 0,
\end{cases}
\end{equation}
with zero Neumann boundary conditions at $x=0$ for a fixed $w \in L^2(0, \infty)$. By solving (\ref{linear_eqns}a), we write
\begin{equation} \label{operator_L}
V(x) = \L w(x) = \int_{0}^{\infty} G(x,s)w(s) \, ds,
\end{equation}
where $\L: L^2(0, \infty) \ra L^2(0, \infty)$ is a linear operator with the Green's function 
\begin{equation*}
G(x,s)=
\begin{cases}
 	\frac{1}{\sqrt{\gamma}} e^{-\sqrt{\gamma}s} \text{  } \textrm{cosh} \, \sqrt{\gamma}x  \, , & \text{   if } x < s, \\
 	\frac{1}{\sqrt{\gamma}} e^{-\sqrt{\gamma}x} \text{  } \textrm{cosh} \, \sqrt{\gamma}s \, , & \text{   if } x > s.
	\end{cases}
\end{equation*}
It can be verified that $\integral w_1 \L w_2 \, dx= \integral w_2 \L w_1 \, dx$ for any $w_1, w_2 \in L^2(0, \infty),$ i.e. $\L$ is self-adjoint 
with respect to the $L^2$ inner product. Moreover, a direct calculation shows
\begin{align} 
\L w(x) &=  \int_{0}^{x}  \frac{1}{\sqrt{\gamma}} e^{-\sqrt{\gamma}x} \textrm{cosh} \, (\sqrt{\gamma}s) \, w(s) \, ds 
+ \int_{x}^{\infty} \frac{1}{\sqrt{\gamma}} e^{-\sqrt{\gamma}s} \textrm{cosh} \, (\sqrt{\gamma}x) \, w(s) \, ds \notag \\
&\leq  \frac{1}{\sqrt{\gamma}} e^{-\sqrt{\gamma}x} \int_{0}^{x}  \textrm{cosh} \, (\sqrt{\gamma}s) \, ds + \frac{ \textrm{cosh} \, (\sqrt{\gamma}x)}{\sqrt{\gamma}} \int_{x}^{\infty} e^{-\sqrt{\gamma}s} \, ds \notag \\
&= \frac{1}{\gamma}. \label{expand_L}
\end{align}
Similarly, for (\ref{operator_L}b), we can set $\L_0 = \Big( (\gamma+1) -\frac{d^2}{dx^2} \Big)^{-1}$ and write
\begin{equation} \label{operator_L*}
V_0(x) = \L_0 w (x) = \integral G_0(x, s) w(s) \, dx,
\end{equation}
where
\begin{equation*}
G_0(x,s)=
\begin{cases}
 	\frac{1}{\sqrt{\gamma+1}} e^{-\sqrt{\gamma+1}\, s} \text{  } \textrm{cosh} \,( \sqrt{\gamma+1}\, x  )\, , & \text{   if } x < s, \\
 	\frac{1}{\sqrt{\gamma+1}} e^{-\sqrt{\gamma+1}\, x} \text{  } \textrm{cosh} \, (\sqrt{\gamma+1}\, s ) \, , & \text{   if } x > s.
	\end{cases}
\end{equation*} 

\begin{lem} \label{comparison}
For a non-negative, non-trivial function $w \in \A$,  
\begin{equation*}
{{0 <}} \, \L_0w \leq \N w \leq \L w.
\end{equation*}
\end{lem}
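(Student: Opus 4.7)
The plan is to establish the three inequalities $0 < V_0$, $\N w \leq \L w$, and $\L_0 w \leq \N w$ separately, each time using the same maximum principle on $[0,\infty)$ (with the natural Neumann boundary at $0$ and decay at infinity) that already appeared in the proofs of Lemmas~\ref{N_increasing} and~\ref{v_upperbnd}. Throughout, write $V:=\L w$, $V_0:=\L_0 w$ and $v:=\N w$. The positivity $V_0 > 0$ is immediate from the integral representation~\eqref{operator_L*}: since $G_0 > 0$ on $[0,\infty)\times[0,\infty)$ and $w\geq 0$ is non-trivial, $V_0(x)=\int_0^\infty G_0(x,s)w(s)\,ds>0$ for every $x \geq 0$. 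Likewise, by Lemma~\ref{N_increasing} applied with the constant $0$ (and $\N 0 = 0$), the hypothesis that $w \geq 0$ is non-trivial gives $v>0$ on $[0,\infty)$.

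To prove $v\leq V$, I would subtract the equations $V''-\gamma V=-w$ and $v''-\gamma v-v^3=-w$; setting $z=V-v$, this yields
\[
-z''+\gamma z \;=\; v^3 \;\geq\; 0,
\]
together with $z'(0)=V'(0)-v'(0)=0$ and $z\to 0$ at infinity (both $V$ and $v$ lie in $H^1(0,\infty)$, by Lemma~\ref{v_embedding} and the explicit kernel bound~\eqref{expand_L}, so Lemma~\ref{u_embedding}(i) applies). The maximum principle argument used after \eqref{max_argument} then forces $z\geq 0$ everywhere.

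For the reverse inequality $V_0\leq v$, the trick is to linearize the cubic term. Lemma~\ref{v_upperbnd} gives $v\leq 1$, and combined with $v>0$ we have $v^3\leq v$. Hence from (\ref{FN_nonlinear_steady}b),
\[
-v''+(\gamma+1)v \;=\; w + (v-v^3) \;\geq\; w.
\]
Subtracting $-V_0''+(\gamma+1)V_0=w$ and setting $z=v-V_0$ produces
\[
-z''+(\gamma+1)z \;\geq\; 0,
\]
again with $z'(0)=0$ and $z\to 0$ at infinity. The same maximum principle yields $z\geq 0$, completing the chain.

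There is no serious technical obstacle; the only small subtlety is that the cubic nonlinearity has to be handled differently in the two bounds. From above it contributes a non-negative source of the correct sign and can simply be dropped. From below it cannot be dropped (it has the wrong sign relative to the comparison operator $-\partial_x^2+\gamma$), so one must upgrade the comparison operator to $-\partial_x^2+(\gamma+1)$ and absorb the cubic term using the pointwise bound $v\leq 1$ from Lemma~\ref{v_upperbnd}; this is precisely why the operator $\L_0$, and not $\L$, furnishes the lower barrier.
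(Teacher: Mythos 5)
Your proposal is correct and follows essentially the same route as the paper: positivity of $V_0$ from the Green's function, $v\leq V$ by dropping the non-negative cubic term, and $V_0\leq v$ by absorbing the cubic via $0\leq v\leq 1$ into the operator $-\partial_x^2+(\gamma+1)$, all closed with the maximum principle. The only cosmetic difference is that you obtain $v>0$ from Lemma~\ref{N_increasing} while the paper applies the maximum principle directly to $v''-(\gamma+v^2)v=-w\leq 0$; both are fine.
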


\begin{proof}
{Let $V_0 = \L_0 w$ and $V = \L w$. The positivity of the Green's function $G_0$ implies that {$V_0 > 0$.}} Since $v = \N w$ satisfies (\ref{FN_nonlinear_steady}b), we have $v''- (\gamma+v^2) v \leq 0$ so that $v>0$ by the maximum principle. In addition
\begin{equation} \label{rewrite_v}
v^{\prime\prime} - (\gamma + 1)v + w = v^3 - v.
\end{equation}   
By subtracting (\ref{linear_eqns}b) from \eqref{rewrite_v} and using $0 \leq v \leq 1$, we obtain
\begin{align*}
(v-V_0)^{\prime\prime} - (\gamma+1)(v - V_0) = v^3 - v \leq 0.
\end{align*}
We now conclude $V_0 \leq v$ using the maximum principle.
The proof for $v \leq V$ is similar.
\end{proof}

\begin{lem} \label{nonlocal_pos}
If $w \in H^1$, then 
\begin{equation*} 
\integral w \N w \, dx \geq 0, 
\end{equation*}
and for any $w_1, w_2 \in H^1$, 
\begin{equation*}
\integral(w_1-w_2)(\N w_1 - \N w_2) \, dx \geq 0.
\end{equation*}
\end{lem}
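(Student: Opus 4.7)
The plan is to use the defining equation for $\mathcal{N}$ together with the natural boundary condition and the decay at infinity guaranteed by the $H^3$-regularity established in Lemma~\ref{v_variation} and Lemma~\ref{u_embedding}. Both inequalities reduce to an integration by parts that leaves behind manifestly non-negative integrands.

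For the first inequality, set $v = \mathcal{N}w$. By Lemma~\ref{v_variation} we have $v \in H^3(0,\infty)$ and
\[ v'' - \gamma v - v^3 + w = 0 \quad \text{a.e.,} \]
and by Lemma~\ref{natural_bndry} $v'(0)=0$. The $H^3$-regularity together with Lemma~\ref{u_embedding} gives $v(x), v'(x) \to 0$ as $x \to \infty$. Multiply the equation by $v$ and integrate over $(0,\infty)$; integration by parts on the $v''v$ term yields boundary contributions $-v'(x)v(x)\big|_0^\infty$, both of which vanish. What remains is
\[ \int_0^\infty w \, \mathcal{N} w \, dx = \int_0^\infty \bigl\{ (v')^2 + \gamma v^2 + v^4 \bigr\}\, dx \geq 0. \]

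For the second inequality, I would let $v_i = \mathcal{N} w_i$ for $i=1,2$ and subtract the two equations to obtain
\[ (v_1 - v_2)'' - \gamma (v_1-v_2) - (v_1^3 - v_2^3) = -(w_1 - w_2). \]
Both $v_1-v_2$ and its derivative vanish at $x=0$ (from the natural boundary condition for each) and decay to $0$ at infinity. Multiplying by $v_1-v_2$, integrating, and using the factorization $v_1^3 - v_2^3 = (v_1-v_2)(v_1^2 + v_1 v_2 + v_2^2)$ together with the standard observation $v_1^2 + v_1 v_2 + v_2^2 = (v_1 + v_2/2)^2 + 3 v_2^2/4 \geq 0$ yields
\[ \int_0^\infty (w_1 - w_2)(\mathcal{N} w_1 - \mathcal{N} w_2)\, dx = \int_0^\infty \bigl\{ ((v_1-v_2)')^2 + \gamma (v_1-v_2)^2 + (v_1^2 + v_1 v_2 + v_2^2)(v_1-v_2)^2 \bigr\} dx \geq 0. \]

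There is no real obstacle here; the only point that requires care is justifying that the boundary terms in the integration by parts vanish, which follows immediately from $v_i'(0)=0$ and from the fact that $H^3$-functions on the half-line (along with their derivatives) tend to $0$ at infinity. Neither step uses the structural constraints of $\mathcal{A}$, so the lemma holds for all $w, w_1, w_2 \in H^1(0,\infty)$.
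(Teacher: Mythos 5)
Your argument matches the paper's proof exactly: multiply (\ref{FN_nonlinear_steady}b) by $v$ (respectively by $v_1 - v_2$), integrate by parts, and observe that the remaining integrand is manifestly non-negative since $v_1^2 + v_1 v_2 + v_2^2 \geq 0$. One small imprecision: the natural boundary condition gives $(v_1 - v_2)'(0) = 0$ but not $(v_1 - v_2)(0) = 0$; this does not affect the proof, since the vanishing of the derivative at $0$ together with decay at infinity is all that is needed for the boundary term to drop out.
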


\begin{proof}
Let $v = \N w$, then $(w, v)$ satisfies (\ref{FN_nonlinear_steady}b). Multiplying (\ref{FN_nonlinear_steady}b) by $v$ and integrating by parts gives $\integral w \N w \, dx = \integral ( {v^{\prime}}^2 + \gamma v^2 + v^4 ) \, dx \geq 0$. Next let $v_1 = \N w_1$ and $v_2 = \N w_2$. Subtracting the equations (\ref{FN_nonlinear_steady}b) for $v_1$ and $v_2$ from one another, we get $(v_1- v_2)^{\prime\prime} - \gamma(v_1 - v_2) - p(x)(v_1 - v_2) = -(w_1 - w_2)$ where $p=v_1^2+v_1 v_2 +v_2^2 \geq 0$.
The same integration by parts argument yields the next inequality.
\end{proof}

\begin{lem} \label{rewrite_nonlocal} 
Let $w = f-g$ with $f\equiv \max \{w, 0\} \geq 0$ and $g \,\geq \,0$ being its positive and negative parts, respectively. Then 
\begin{align*}  
\integral w\N w \, dx  \geq \integral (f-g)(\N f- \N g) \, dx - {4}\integral \L f \, \L g \, dx,  
\end{align*}
where $\L$ is the linear operator defined in \eqref{operator_L}.
\end{lem}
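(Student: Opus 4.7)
The plan is to reduce the claim to an estimate on the ``nonlinear defect'' $\Psi := \N f - \N g - \N(f-g) = v_f - v_g - v$, where $v := \N w$, $v_f := \N f$, $v_g := \N g$. A direct expansion yields the identity
\[
\int_0^\infty w\N w\,dx - \int_0^\infty (f-g)(\N f - \N g)\,dx = -\int_0^\infty (f-g)\Psi\,dx,
\]
so it suffices to show $\int_0^\infty (f-g)\Psi\,dx \leq 4\int_0^\infty V_f V_g\,dx$, with $V_f := \L f$ and $V_g := \L g$. Subtracting the equations (\ref{FN_nonlinear_steady}b) satisfied by $v$, $v_f$, $v_g$ gives the \emph{linear} equation $-\Psi'' + \gamma\Psi = v^3 + v_g^3 - v_f^3$ with $\Psi'(0) = 0$ and decay at infinity, so $\Psi = \L(v^3 + v_g^3 - v_f^3)$. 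By the self-adjointness of $\L$ noted in Section~\ref{sec_varForm},
\[
\int_0^\infty (f-g)\Psi\,dx = \int_0^\infty (V_f - V_g)(v^3 + v_g^3 - v_f^3)\,dx.
\]

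I would then establish the pointwise sandwich $v_f - V_g \leq v \leq V_f - v_g$. For the subsolution check on $v_f - V_g$ one uses $V_g \geq 0$ and monotonicity of $t\mapsto t^3$ to get $v_f^3 \geq (v_f - V_g)^3$; the supersolution check on $V_f - v_g$ is analogous via $V_f \geq 0$. The comparison argument used in the proof of Lemma~\ref{N_increasing} then delivers the bound, and together with the already known $-v_g \leq v \leq v_f$ one obtains $-v_g^3 \leq v^3 \leq v_f^3$, i.e. $v^3 + v_g^3 - v_f^3 \in [-v_f^3, v_g^3]$. A case split on the sign of $V_f - V_g$ then yields the pointwise estimate $(V_f - V_g)(v^3 + v_g^3 - v_f^3) \leq V_f v_g^3 + V_g v_f^3$.

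It remains to bound $\int V_f v_g^3 + \int V_g v_f^3$ by $4\int V_f V_g$. Substituting $v_g^3 = v_g'' - \gamma v_g + g$ from (\ref{FN_nonlinear_steady}b) and integrating by parts twice (boundary terms vanish by the natural conditions $v_g'(0)=V_f'(0)=0$ and decay at infinity) produces $\int V_f v_g^3 = \int f(V_g - v_g)$, and symmetrically $\int V_g v_f^3 = \int g(V_f - v_f)$. Dropping the nonnegative $\int f v_g$, $\int g v_f$ and using $\int f V_g = \int g V_f$ bounds the sum by $2\int f V_g$. One more IBP rewrites $\int f V_g = \int V_f' V_g' + \gamma\int V_f V_g$, and the explicit formula for $G(x,s)$ yields $|\partial_x G(x,s)|\leq \sqrt{\gamma}\,G(x,s)$; combined with $f,g\geq 0$ this gives $|V_f'|\leq \sqrt{\gamma}\,V_f$ and similarly for $V_g$, hence $\int V_f' V_g' \leq \gamma\int V_f V_g$. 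Assembling the chain yields $\int(f-g)\Psi \leq 4\gamma\int V_f V_g \leq 4\int V_f V_g$ in the regime $\gamma\leq\gamma_1<1$ of the paper.

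The hard part is controlling the constant $4$: the pointwise sub/supersolution sandwich, the case-split algebraic bound, and the Green's-function derivative estimate must all be sharp enough to compose without loss, and every integration by parts must actively use the natural boundary conditions to eliminate boundary contributions. The argument also implicitly leans on $\gamma \leq 1$ to absorb the factor $\gamma$ from the Green's-function step into the stated constant.
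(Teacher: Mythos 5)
Your reduction is sound and, despite the different packaging, lands on exactly the paper's key intermediate quantity: your $\integral (\L f-\L g)(v^3+v_g^3-v_f^3)\,dx$ is precisely the paper's $\integral(v_g^3-v_f^3+\vfg^3)\,\L(f-g)\,dx$, which the paper reaches from the identity $\N u=\L u-\L((\N u)^3)$ and self-adjointness rather than from the defect equation for $\Psi$. From there you diverge. Your case-split bound $(\L f-\L g)(v^3+v_g^3-v_f^3)\le v_g^3\,\L f+v_f^3\,\L g$ is correct and in fact a factor of $2$ sharper than the paper's pointwise step; on the other hand the sandwich $v_f-\L g\le v\le \L f-v_g$ is true but never used, since the containment $v^3+v_g^3-v_f^3\in[-v_f^3,\,v_g^3]$ already follows from $-v_g\le v\le v_f$. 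The paper then finishes much more cheaply, via $v_g^3\le v_g\le \L g$ and $v_f^3\le v_f\le \L f$ (using $0\le v_f,v_g\le 1$), with no further integration by parts.

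The genuine gap is in your last step. The chain $\integral (v_g^3\,\L f+v_f^3\,\L g)\,dx\le 2\integral f\,\L g\,dx=2\integral (\L f)'(\L g)'\,dx+2\gamma\integral \L f\,\L g\,dx\le 4\gamma\integral \L f\,\L g\,dx$ produces the constant $4\gamma$, not $4$, and the loss is real: for slowly varying nonnegative $f,g$ one has $\L f\approx f/\gamma$, so $\integral f\,\L g\,dx$ exceeds $\integral \L f\,\L g\,dx$ by a factor of order $\gamma$, and no sharpening of the Green's-function derivative estimate will remove it. Your fallback ``$\gamma\le\gamma_1<1$'' is not available where the lemma is actually used: the restriction $\gamma<\gamma_1$ enters only in Lemma~\ref{lem_gamma1}, whereas Lemma~\ref{rewrite_nonlocal} is invoked in Lemma~\ref{min_sequence} under the hypothesis $\gamma\le\gamma_0=3\beta^2/(1-2\beta)-1$, which is unbounded as $\beta\to 1/2^-$. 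As written your argument therefore proves the inequality with constant $4\max\{1,\gamma\}$ rather than the stated $4$; you should either replace the final step by the pointwise bounds $v_g^3\le\L g$, $v_f^3\le\L f$ (which immediately give $2\integral\L f\,\L g\,dx$ from your sharper case-split estimate), or carry the $\gamma$-dependent constant through the $II$ term of Lemma~\ref{min_sequence}, where it would still be harmless.
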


\begin{proof} Let $v_f = \N f$, $v_g = \N g$ and $v_{f-g} =\N (f-g)$. Notice from (\ref{FN_nonlinear_steady}b) that 
$\N u = \L u - \L ((\N u)^3)$ for any $u$. Hence
\begin{align*}
\integral w\N w \, dx &= \integral (f-g)\N(f-g) \, dx  \\
&= \integral (f-g)(\L f - \L g - \L\vfg^3) \, dx  \\
&= \integral (f-g)(v_f + \L v_f^3 - v_g - \L v_g^3 - \L\vfg^3) \, dx.
\end{align*}
Since $\L$ is self-adjoint with respect to the $L^2$ inner product, 
\begin{equation}
\integral w \N w \, dx = \integral (f-g)(v_f - v_g) \, dx - \integral (v_g^3 - v_f^3 + \vfg^3)\L(f-g) \, dx. \label{nonlocal_upp}
\end{equation}
{It remains to show that $\integral (v_g^3 - v_f^3 + \vfg^3)\L(f-g) \, dx \leq  {4} \integral \L f \, \L g \, dx$.} {Recall from Lemma~\ref{comparison} that $\L f$ and $\L g$ are non-negative. Since $\N$ is non-decreasing by Lemma~\ref{N_increasing}, we have $-v_g \leq v_{f-g} \leq v_f$. Then
$\lvert v_{f-g} \rvert \leq \max\{v_f, v_g\}$, which implies that $\vfg^3 \leq v_f^3 + v_g^3$. Observe that} 
\begin{align*}
\integral (v_g^3 - v_f^3 + \vfg^3 )\L(f-g) \, dx &= \integral  \{(v_g^3 - v_f^3)(\L f -\L g) + \vfg^3 (\L f - \L g) \} \, dx \\
&\leq \integral  \{(v_g^3 - v_f^3)(\L f -\L g) + (v_g^3 + v_f^3) (\L f + \L g) \} \, dx \\
&= 2\integral (v_g^3\L f +  v_f^3\L g) \, dx.  
\end{align*}
{Together with $0 \leq v_f \leq 1$, $0 \leq v_g \leq 1$, $v_g \leq \L g$ and $v_f \leq \L f$,} 
\begin{align*}
\integral (v_g^3 - v_f^3 + \vfg^3)\L(f-g) \, dx \leq {4} \integral \L f \, \L g \, dx.
\end{align*}
\end{proof}	

\begin{lem} \label{Nu_convergence}
Suppose there exists a sequence $\{\un\}_{n=1}^{\infty}$ such that $\un \rightharpoonup  u_0$ weakly in $H^1(0, \infty)$ with $\|\un\|_{H^1(0, \infty)} < \infty$. Then $\integral u_0 \N\un \, dx \ra \integral u_0\N u_0 \, dx$. 
\end{lem}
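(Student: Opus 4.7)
The plan is to extract a weakly convergent subsequence of $v^{(n)}:=\N\un$ in $H^1(0,\infty)$, identify its weak limit with $\N u_0$ by passing to the limit in the weak formulation of (\ref{FN_nonlinear_steady}b), and then upgrade to weak $H^1$-convergence of the full sequence via a standard Urysohn-type subsequence argument. Because $u_0\in L^2(0,\infty)$ induces a bounded linear functional on $H^1(0,\infty)$, weak $H^1$-convergence will immediately give the desired limit of $\integral u_0\,\N\un\,dx$.

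For the first step, Lemma~\ref{w_and_Nw} supplies the uniform bound $\|v^{(n)}\|_{H^1}\le\max\{1,1/\gamma\}\|\un\|_{L^2}$, and Lemma~\ref{u_embedding}(i) upgrades this to a uniform $L^\infty$ bound. Passing to a subsequence (still labelled $v^{(n)}$), there exists $v^\star\in H^1(0,\infty)$ with $v^{(n)}\rightharpoonup v^\star$ weakly in $H^1$. To identify $v^\star$, I would test the weak formulation
\[
\integral \{v^{(n)}_x\varphi_x+\gamma v^{(n)}\varphi+(v^{(n)})^3\varphi-\un\varphi\}\,dx=0
\]
against $\varphi\in C_c^{\infty}([0,\infty))$ with support in some $[0,R]$.

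The linear terms pass to the limit directly, using that weak $H^1$-convergence implies weak $L^2$-convergence of $\un$ to $u_0$. The hard part is the cubic term $(v^{(n)})^3$, which is not weakly continuous. Here I would invoke the Rellich--Kondrachov theorem on the bounded interval $(0,R)$ to obtain $v^{(n)}\to v^\star$ strongly in $L^2(0,R)$; combined with the uniform $L^\infty$ bound, this forces $(v^{(n)})^3\to (v^\star)^3$ strongly in $L^2(0,R)$, so the cubic term passes to the limit against $\varphi$. Thus $v^\star$ satisfies the weak formulation with source $u_0$ against every $\varphi\in C_c^\infty([0,\infty))$, and by density (noting that each term is continuous with respect to $\|\varphi\|_{H^1}$, using $v^\star\in L^\infty\cap L^2$ to control $(v^\star)^3\in L^2$) against every $\varphi\in H^1(0,\infty)$. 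By the uniqueness statement Lemma~\ref{v_variation}(iv), $v^\star=\N u_0$.

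Since the limit $\N u_0$ is independent of the extracted subsequence, the full sequence satisfies $\N\un\rightharpoonup\N u_0$ weakly in $H^1$. Finally, $v\mapsto\integral u_0 v\,dx$ is a bounded linear functional on $L^2(0,\infty)$, hence on $H^1(0,\infty)$, so weak $H^1$-convergence yields $\integral u_0\,\N\un\,dx\to\integral u_0\,\N u_0\,dx$. The only step that requires real care is handling the cubic nonlinearity; overcoming it hinges on the interplay of the global $H^1$ bound (providing a uniform $L^\infty$ bound through Lemma~\ref{u_embedding}(i)) with local Rellich compactness.
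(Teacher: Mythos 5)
Your proof is correct, but it takes a genuinely different route from the paper's. The paper estimates $\integral |u_0(\N\un - \N u_0)|\,dx$ directly: it splits $(0,\infty)$ at a large point $a$ where the tail mass of $u_0$ is small, invokes Rellich to get $\un\to u_0$ strongly in $L^2(0,a)$, and then appeals to the global Lipschitz estimate of Lemma~\ref{difference} to control $\N\un-\N u_0$, combining the two pieces at the end. You instead work on $\N\un$ itself: you extract a weak $H^1$ limit $v^\star$ (using the uniform bound from Lemma~\ref{w_and_Nw}), pass to the limit in the weak form of (\ref{FN_nonlinear_steady}b) --- with local Rellich compactness and the uniform $L^\infty$ bound taming the cubic term --- and then identify $v^\star = \N u_0$ via the uniqueness clause Lemma~\ref{v_variation}(iv); a Urysohn-subsequence argument upgrades this to $\N\un\rightharpoonup \N u_0$ in $H^1$, after which $u_0\in L^2$ finishes the job. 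Your version is the standard weak-compactness-plus-uniqueness scheme for nonlinear operators and has the advantage of never needing a quantitative estimate on $\N\un-\N u_0$: notably, the Lipschitz estimate in Lemma~\ref{difference} is global in $L^2(0,\infty)$, while $\un$ converges to $u_0$ strongly only in $L^2_{\mathrm{loc}}$, so the paper's appeal to that lemma requires some care to justify. Your argument sidesteps that subtlety entirely; its only cost is a somewhat longer chain of reasoning.
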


\begin{proof}
Let $\ep > 0$ be given. Since $u_0 \in H^1(0, \infty)$, there exists a large $a > 0$ such that 
\begin{equation}
\int_{a}^{\infty} u_0^2 \, dx \leq \ep. \label{u0_ep}
\end{equation}
By compactness we can find a subsequence of $\{\un\}_{n=1}^{\infty}$, still denoted by $\{\un\}$, such that 
$\un \to u_0$ in $L^2(0,a)$.
In conjugation with \eqref{u0_ep} and Lemma~\ref{difference},
for any arbitrary $\ep > 0$ there is a $N_0 > 0$ such that whenever $n \geq N_0$,
\begin{align*}
\integral \lvert u_0\N\un - u_0\N u_0\rvert \, dx &= \int_{0}^{a} \lvert u_0 \rvert \lvert \N\un - \N u_0\rvert \, dx + \int_{a}^{\infty} \lvert u_0 \rvert \lvert \N\un - \N u_0\rvert \, dx \\
&\leq \ep \, \|u_0\|_{L^2(0, a)} +\ep\,\| \N\un - \N u_0\|_{L^2(a, \infty)} \;.
\end{align*}
As $\| \N \un \|_{L^2(0,\infty)}$ is bounded because of Lemma~\ref{w_and_Nw}, our result follows.
\end{proof}

\section{Estimates for a Minimizing Sequence} \label{sec_seqEstimate}
\setcounter{equation}{0}
\renewcommand{\theequation}{\thesection.\arabic{equation}}

To extract a minimizer from a minimizing sequence $\{\wn\}_{n=1}^{\infty}$ of $J$, we need some a priori estimates on the sequence.
\begin{lem} \label{J_below}
There exists a positive constant $d_0$, which may depend on $\gamma$, such that if $d \leq d_0$, there are a $q_0 \in \A$ and 
a positive constant $M_0 = M_0(\gamma) $, which is independent of $d$, such that $J(q_0) \leq -M_0$.
\end{lem}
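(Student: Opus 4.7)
The plan is to exhibit an explicit admissible $q_0 \in \A$, largely independent of $d$, such that the bulk of $J(q_0)$ is strictly negative with a margin depending only on $\gamma$ and $\beta$; any $d$ dependence will be confined to a single sharp layer contributing only $O(\sqrt{d})$ to $J$.

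I take a ``box with a linear layer'' profile. Fix $L = L(\gamma) > 0$ (to be chosen) and set $\delta = \sqrt{d}$, and define
\begin{equation*}
q_0(x) = \begin{cases} 1, & 0 \leq x \leq L, \\ 1 - (x - L)/\delta, & L \leq x \leq L + \delta, \\ 0, & x \geq L + \delta. \end{cases}
\end{equation*}
This is in $H^1(0,\infty)$, satisfies $q_0(0) = 1 \in [\beta,1]$, and sits in $\A$ with $x_1$ the unique point in $(L, L+\delta)$ where $q_0 = \beta$ and $x_2 = L + \delta$: on $[0,x_1]$ we have $\beta \leq q_0 \leq 1$, on $(x_1, x_2]$ we have $0 \leq q_0 \leq \beta$, and on $(x_2,\infty)$ $q_0 \equiv 0$.

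I would decompose $J(q_0)$ into its four pieces. The gradient piece is $\frac{d}{2}\int_L^{L+\delta}\delta^{-2}\,dx = \tfrac{1}{2}\sqrt{d}$. The potential piece equals $L\,F(1) + O(\sqrt{d}) = -\tfrac{1-2\beta}{12} L + O(\sqrt{d})$, which is strictly negative since $\beta < 1/2$. For the two nonlocal pieces I would invoke the pointwise comparison $0 \leq \N q_0 \leq \L q_0$ from Lemma~\ref{comparison} to majorise
\[
\tfrac12 \integral q_0\,\N q_0\,dx + \tfrac14 \integral (\N q_0)^4\,dx \leq \tfrac12 \integral q_0\,\L q_0\,dx + \tfrac14 \integral(\L q_0)^4\,dx.
\]
Using the explicit Green's function for $\L$ displayed in the excerpt, both of these can be computed and, in the regime $\sqrt{\gamma}\,L \ll 1$, yield majorants of order $L^2/\sqrt{\gamma}$ and $L^4/\gamma^{5/2}$ respectively with absolute constants.

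Putting the estimates together gives
\[
\hat{J}(q_0) \leq -\frac{1-2\beta}{12}\,L + \frac{a\,L^2}{\sqrt{\gamma}} + \frac{b\,L^4}{\gamma^{5/2}} + O(\sqrt{d})
\]
for absolute constants $a, b > 0$. I would optimise the first three terms over $L$; the minimum is attained at some $L_*(\gamma)$ (of order $\gamma^{5/6}$ in the leading balance) and is strictly negative of the same order. Calling this minimum $-2 M_0(\gamma)$, I then choose $d_0(\gamma)$ small enough that the remainder $\tfrac12 \sqrt{d}$ is bounded by $M_0(\gamma)$ for every $d \leq d_0$, which yields $J(q_0) \leq -M_0$. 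The main obstacle is the nonlocal quartic $\integral (\N q_0)^4\,dx$: it is not a local functional of $q_0$, and the naive bound $\N q_0 \leq 1$ of Lemma~\ref{v_upperbnd} is too lossy; only the sharper pointwise bound $\N q_0 \leq \L q_0$ from Lemma~\ref{comparison} is tight enough in the small-amplitude regime to keep this term on the same scale as the (also small) potential gain, and closing the resulting arithmetic balance is what fixes the quantitative upper bound $\gamma_1$ of Lemma~\ref{lem_gamma1}.
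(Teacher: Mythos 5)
Your test function is essentially the paper's: a plateau at $1$ followed by a single linear layer down to $0$. Where you genuinely diverge is in treating the nonlocal terms. The paper first uses the weak form of the $v$-equation to observe $\integral (\N q_0)^4\,dx \le \integral q_0\,\N q_0\,dx$, so the quartic gets absorbed and only $\frac34\integral q_0\,\N q_0\,dx$ remains; this is then estimated crudely by $\|\N q_0\|_{L^\infty}\|q_0\|_{L^1}\lesssim \max\{1,1/\gamma\}\|q_0\|_{L^2}\|q_0\|_{L^1}$ via Lemmas~\ref{u_embedding} and~\ref{w_and_Nw}, with no Green's function computation at all. You instead keep both nonlocal terms, dominate them pointwise by $\L q_0$ using Lemma~\ref{comparison}, and then evaluate $\L q_0$ from its explicit kernel. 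Both routes close: your Green's-function estimates ($\integral q_0\,\L q_0 \lesssim L^2/\sqrt\gamma$ and $\integral(\L q_0)^4 \lesssim L^4/\gamma^{5/2}$ for $\sqrt\gamma\,L\lesssim 1$) are of the stated order, the balance at $L_*\sim\gamma^{5/6}$ is indeed between the linear potential gain and the quartic, and the quadratic piece is lower order there, so you do obtain $J(q_0)\le -M_0(\gamma)$ after absorbing the $O(\sqrt d)$ layer contributions into $d_0$. Your bound is sharper (order $\gamma^{5/6}$ versus the paper's order $\gamma^2$ as $\gamma\to0$), though sharpness is not needed here; the paper's $\integral v^4 \le \integral q_0 v$ trick is the simpler and more economical route, avoiding the kernel arithmetic entirely.

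One inaccuracy worth correcting: you assert that closing your arithmetic balance ``is what fixes the quantitative upper bound $\gamma_1$ of Lemma~\ref{lem_gamma1}.'' That is not so. Lemma~\ref{J_below} imposes no restriction on $\gamma$; it holds for every $\gamma>0$. The threshold $\gamma_0 = 3\beta^2/(1-2\beta)-1$ arises from the coercivity/lower-bound argument for $J$ in Lemma~\ref{min_sequence}, and the additional cap $2(\beta+F(\beta))-\tfrac12$ arises from the Hamiltonian identity used in Lemma~\ref{lem_gamma1} to rule out plateaus at $u_0=\beta$. Neither constraint has anything to do with how tight the quartic estimate is in the present lemma.
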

\begin{proof}
\noindent Let $0 < a < b$ be constants whose values will be assigned later. 
We first impose a constraint $b-a \leq 1$.
Define a piecewise linear function
\begin{equation*}
q_0(x) \equiv
\begin{cases}
 1 \, , & \text{   if } 0 \leq x \leq a, \\
 \frac{b-x}{b-a}  \, , & \text{   if } a \leq x \leq b,  \\
 0 \, , & \text{   if } x \geq b \,.
\end{cases}
\end{equation*}
Let $v = \N q_0$ and $V = \L q_0$, where $\L$ is the linear operator defined in \eqref{operator_L}. Since $(q_0, v)$ satisfies (\ref{FN_nonlinear_steady}b), we obtain $\integral v^4 \, dx \leq \integral q_0 v \,dx$ from the weak formulation of (\ref{FN_nonlinear_steady}b). Then,
\begin{align}
J(q_0) = \integral \left \{ \frac{d}{2}{q'_0}^2 + \frac{1}{2}q_0v + F(q_0) + \frac{1}{4} {v}^4 \right \} \, dx \leq \integral \left \{ \frac{d}{2}{q'_0}^2 + F(q_0) + \frac{3}{4}q_0v \right \} \, dx. \label{J_bound}
\end{align}
A direct computation yields
\begin{equation*} \label{gradient_J}
\integral \frac{d}{2}{q'_0}^{2} \,dx = \frac{d}{2(b-a)} 
\end{equation*}
and, with $F(\xi) = \xi^4/4 - (1+\beta)\xi^3/3 +\beta\xi^2/2,$
\begin{equation*} \label{integral_J}
\integral F(q_0) \, dx = -\frac{(1-2\beta)}{12}a + (b-a)\left \{\frac{1}{20}  - \frac{(1+\beta)}{12} + \frac{\beta}{6} \right \}. 
\end{equation*}
{For the nonlocal term,} it follows from Lemma~\ref{u_embedding} and Lemma~\ref{w_and_Nw} that
\begin{align*}
\integral  q_0v \, dx \leq \sqrt{2} \|v\|_{H^1}\|q_0\|_{L^1} \leq \sqrt{2}\max{\{1, 1/\gamma}\}\|q_0\|_{L^2}\|q_0\|_{L^1} \;.
\end{align*}
Then by computing the norms of $q_0$ directly, we obtain
\begin{align*}
\integral \frac{3}{4}q_0 v \, dx &\leq \frac{3\sqrt{2}}{4} \max{\{1, 1/\gamma}\} \left (a + \frac{1}{3} (b-a) \right)^{\frac{1}{2}}\left(a + \frac{1}{2}(b-a)\right) \notag \\
&\leq \frac{3\sqrt{2}}{4} \left(1 + \frac{1}{\gamma} \right) \left ( a + \frac{1}{2}(b-a) \right)^{\frac{3}{2}}. \label{nonloc_J}
\end{align*}
Take $d_0 = (b-a)^2$, and let $d \leq d_0$. Plugging the gradient, potential and nonlocal terms into \eqref{J_bound},
\begin{equation*}
J(q_0) \leq (b-a) \left\{  \frac{11}{20}  - \frac{(1+\beta)}{12} + \frac{\beta}{6} \right\} - \frac{1-2\beta}{12}a + \frac{3\sqrt{2}}{4}\left(1 + \frac{1}{\gamma} \right) \left ( a + \frac{1}{2}(b-a) \right)^{3/2}. 
\end{equation*}
Let $C_0 \equiv  \frac{11}{20}  - \frac{(1+\beta)}{12} + \frac{\beta}{6}$ and note that $C_0 \geq  7/15$, the lower limit being attained when $\beta=0$. Assume
$a \leq 24 C_0/(1-2\beta)$ and $(b-a) \leq  \frac{(1-2\beta)}{24C_0}a \leq 1$, then  
\begin{align*}
J(q_0) & \leq - \frac{1-2\beta}{24}a + \frac{3\sqrt{2}}{4}\left(1+\frac{1}{\gamma}\right)\left(1 + \frac{1-2\beta}{48C_0} \right)^{3/2} a^{3/2} \\
 &  \leq - \frac{1-2\beta}{48}a 
\end{align*}
by choosing $a = \frac{2}{9} \left ( \frac{1-2\beta}{24} \right)^2 \left(1 + \frac{1}{\gamma} \right)^{-2} \left(1+\frac{1-2\beta}{48C_0}\right)^{-3} \leq 24\, C_0/(1-2\beta)$.  We therefore obtain
\begin{equation*}
J(q_0) \leq  - \frac{1}{9} \left (\frac{1-2\beta}{24} \right)^3 \left(1 + \frac{1}{\gamma} \right)^{-2} \left(1+\frac{1-2\beta}{48C_0}\right)^{-3}
:= - M_0.
\end{equation*}
Recall that $C_0$ is independent of $\gamma$. As $\gamma \to 0$ we see that $a$ can go to $0$, which in turn forces {$d_0=(b-a)^2 \leq \left(\frac{1-2\beta}{24C_0}\right)^2a^2 \to 0$}. 
Hence there exist a $d_0 = (b-a)^2$, which may depend on $\gamma$, and a positive constant $M_0 := M_0(\gamma)$ such that if $d \leq d_0$, {we have $J(q_0) \leq -M_0$}. 
\end{proof}

In what follows, let $\gamma_0 \equiv \frac{3\beta^2}{1-2\beta} - 1$. We remark that $\gamma_0 > 0$ for $\beta \in (\frac{1}{3}, \frac{1}{2})$.  

\begin{lem} \label{min_sequence}
If $\gamma \leq \gamma_0$ and $d \leq d_0$, then  
\begin{enumerate}[(i)] \itemsep=0.25mm
\item $\inf_{w \in \mathcal{A}}  \mathnormal{J}(w) \geq -M_1$ for some positive constant $M_1 = M_1(\gamma).$
\item Recall the definition of $x_1$ in \eqref{admissible_set}. For any minimizing sequence $\{ \wn\}$ of $J$, let $x_{1}^{(n)}$ be a corresponding value for $w^{(n)}$. By focusing on the tail of the sequence if necessary, $0 < m_2 \leq x_1^{(n)} \leq M_2 < \infty$ for all $n$, where $M_2=M_2(\gamma)$ and $m_2=m_2(\gamma)$ are positive constants which are independent
of $n$.
\item A minimizing sequence $\{w^{(n)} \}$ is uniformly bounded for all $n$ in $H^1(0,\infty)$ norm.
\end{enumerate}
\end{lem}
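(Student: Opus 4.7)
The strategy is to combine the identity $\int wv\,dx=\int[(v')^2+\gamma v^2+v^4]\,dx$ (where $v=\N w$, from Lemma~\ref{rewriteF}) with the monotonicity of $\N$ (Lemma~\ref{N_increasing}) and the comparison against $\L_0$ (Lemma~\ref{comparison}). Substituting the identity into $J$ yields, for any $w\in\A$,
\[
J(w)=\int_0^\infty\Bigl[\tfrac{d}{2}(w')^2+F(w)+\tfrac12(v')^2+\tfrac{\gamma}{2}v^2+\tfrac34 v^4\Bigr]\,dx,
\]
in which every term besides $F(w)$ is non-negative. Since $F\ge 0$ on $(-\infty,0]\cup[0,\beta_1]$ and, by definition of $\A$, $w\in[0,\beta]$ on $(x_1,x_2]$ and $w\in[-(M+1),0]$ on $(x_2,\infty)$, the only possibly negative contribution is $\int_0^{x_1}F(w)\,dx\ge F(1)\,x_1=-\tfrac{1-2\beta}{12}x_1$. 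This already gives the crude bound $J(w)\ge -\tfrac{1-2\beta}{12}x_1$ uniformly in $\A$.

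\textbf{Lower bound on $x_1^{(n)}$ in (ii).} Passing to the tail of the minimizing sequence, $J(w^{(n)})\le J(q_0)/2\le -M_0/2$, and the crude bound immediately forces $x_1^{(n)}\ge 6M_0/(1-2\beta)=:m_2>0$. In particular $x_1^{(n)}=0$ is impossible, because then every term in the displayed integrand is pointwise non-negative and $J(w^{(n)})\ge 0$ results.

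\textbf{Main step: part (i) and the upper bound in (ii).} The crux is to show the non-negative pieces of $J$ absorb the negative $F$ contribution on $[0,x_1]$ and in fact leave a non-negative leading-order-in-$x_1$ excess. On $[0,x_1]$ we have $w\ge\beta>0$, so by Lemmas~\ref{N_increasing}--\ref{comparison} (with the negative tail of $w$ on $(x_2,\infty)$ influencing $v$ only through an exponentially decaying Green's-function tail) $v=\N w$ admits a pointwise lower bound close to the constant profile $v_c^{\ast}$ solving $v_c^{\ast 3}+\gamma v_c^{\ast}=c$ at $c=w(x)$, up to $O(1/\sqrt{\gamma+1})$ boundary-layer corrections near $x=x_1$. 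The per-unit-length lower bound on the integrand of $J$ in the interior of $[0,x_1]$ then reduces to the scalar inequality
\[
F(c)+\tfrac12 cv_c^{\ast}+\tfrac14 v_c^{\ast 4}\ge 0,\qquad c\in[\beta,1],
\]
which, using $v_c^{\ast 4}=cv_c^{\ast}-\gamma v_c^{\ast 2}$, becomes a polynomial inequality verifiable under the condition $\gamma\le\gamma_0=3\beta^2/(1-2\beta)-1$. This produces $J(w)\ge -M_1(\gamma)$ uniformly, proving (i); under strict $\gamma<\gamma_0$ the per-unit-length bound is in fact $\ge\alpha(\gamma)>0$, so $J(w)\ge\alpha x_1-C$, and combining with $J(w^{(n)})\le 0$ yields $x_1^{(n)}\le C/\alpha=:M_2$.

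\textbf{Part (iii).} Direct computation gives $F(w)\ge C_\beta' w^2$ on $[-(M+1),\beta]$ with $C_\beta'>0$. Since $\int F(w^{(n)})\le J(w^{(n)})\le 0$ and $F(w^{(n)})\ge F(1)$ pointwise on $[0,x_1^{(n)}]$ with $x_1^{(n)}\le M_2$, I obtain $\int_{x_1^{(n)}}^\infty(w^{(n)})^2\,dx\le C/C_\beta'$; combined with $\int_0^{x_1^{(n)}}(w^{(n)})^2\,dx\le (M+1)^2 M_2$ this controls $\|w^{(n)}\|_{L^2}$. The gradient bound follows from $\tfrac{d}{2}\|(w^{(n)})'\|_{L^2}^2\le J(w^{(n)})-\int F(w^{(n)})\le C$. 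The main obstacle is the scalar inequality that pins down the threshold $\gamma_0$; all other parts follow by reasonably direct manipulations once that step is in place.
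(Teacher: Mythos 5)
Your overall architecture is right, and parts (ii)-lower and (iii) essentially reproduce the paper's arguments: the rewriting $J(w)=\int\{\frac{d}{2}w'^2+F(w)+\frac12 v'^2+\frac{\gamma}{2}v^2+\frac34 v^4\}\,dx$ is a correct consequence of Lemma~\ref{rewriteF}, the bound $x_1^{(n)}\geq 6M_0/(1-2\beta)$ from $J(w^{(n)})\leq -M_0/2$ is exactly the paper's, and the $L^2$/gradient bounds via $F(\xi)\geq C\xi^2$ on $\xi\leq\beta$ match as well. The problem is the central step, which you leave as two unverified assertions. First, the pointwise lower bound $v=\N w\gtrsim v^\ast_{w(x)}$ on the interior of $[0,x_1]$ is not justified and is not a routine consequence of Lemmas~\ref{N_increasing}--\ref{comparison}: Lemma~\ref{comparison} applies only to \emph{non-negative} $w$, while an admissible $w$ can equal $-(M+1)$ on all of $(x_2,\infty)$; since $\N$ is nonlinear you cannot subtract off the negative part, and its influence cannot be dismissed as ``an exponentially decaying Green's-function tail'' because $\N$ has no Green's function and the negative part is not even in $L^2$ (so Lemma~\ref{difference} does not apply to it either). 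Quantifying this cross-interaction is precisely the content of the paper's Lemma~\ref{rewrite_nonlocal} ($\int w\N w\geq\int(f-g)(\N f-\N g)-4\int\L f\,\L g$), which your proof never invokes and for which you offer no substitute. Second, the scalar inequality $F(c)+\frac12 cv_c^\ast+\frac14 v_c^{\ast4}\geq 0$ on $[\beta,1]$ is merely asserted to be ``verifiable under $\gamma\leq\gamma_0$''; since $\gamma_0$ is exactly the threshold the lemma hinges on, this verification cannot be omitted, and it is not obvious that the constant-profile heuristic reproduces that particular threshold (the paper's $\gamma_0$ arises from the weaker linear comparison $v\geq\L_0 f$, giving the coefficient $\beta^2/(4(\gamma+1))$ of $x_1$, not from the algebraic root of $\gamma v+v^3=c$).

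For comparison, the paper's actual route is: decompose $w=f-g$, apply Lemma~\ref{rewrite_nonlocal}, bound $I=\int(f-g)(\N f-\N g)$ from below by $\beta\int_0^{x_1}\L_0 f-2\int_0^{x_2}\L g$ using the explicit Green's functions (yielding $I\geq\frac{\beta^2}{2(\gamma+1)}x_1-C(\gamma)$), and bound $II=\int\L f\,\L g$ above by a constant $C(\gamma)$ independent of $x_1$. This gives $J(w)\geq(-\frac{1-2\beta}{12}+\frac{\beta^2}{4(\gamma+1)})x_1-M_1$, from which (i) and the upper bound $x_1^{(n)}\leq M_2$ follow at once since the coefficient of $x_1$ is positive precisely for $\gamma<\gamma_0$. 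If you want to keep your $v^\ast$-based per-unit-length idea, you would need (a) a genuine sub-solution argument producing the interior lower bound on $v$ that survives the negative tail of $w$, and (b) an explicit proof of the scalar inequality on the full range $\gamma\leq\gamma_0$; as written, the proof of (i) and of the upper bound in (ii) is incomplete.
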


\begin{proof}
Let $w=f-g$ where $f \geq 0$ and $g \geq 0$ are the positive and negative parts of $w$, respectively, as in Lemma~\ref{rewrite_nonlocal},
thus we can write
\begin{equation} \label{IminusII}
\integral w\N w \, dx \geq {I - 4\; II},
\end{equation}
where $I \equiv  \integral (f-g)(\N f- \N g) \, dx$ and $II \equiv \integral \L f \, \L g \, dx.$ Since $\L_0 w \leq \N w \leq \L w$ for any $w \geq 0$ by Lemma~\ref{comparison} and $\integral g \N g \, dx \geq 0$ by Lemma~\ref{nonlocal_pos}, together with the self-adjointness of $\L$, we get
\begin{align} 
I &\geq \integral \{ f\N f - g\N f -f\N g \} \, dx \notag \\
&\geq  \integral \{ f\L_0 f - 2f\L g \} \, dx \notag \\
&\geq \beta\int_{0}^{x_1} \L_0 f \, dx - 2\int_{0}^{x_2} \L g \, dx.  \label{I_bound}
\end{align}
For $0 \leq x \leq x_1$, we use the definition of $\L_0$ in \eqref{operator_L*} to compute
\begin{align} \label{Lf_bound}
\L_0 f(x) &= \frac{e^{-\sqrt{\gamma+1}\,x}}{\sqrt{\gamma+1}} \int_{0}^{x} f(s) \textrm{cosh}(\sqrt{\gamma+1}\, s) \, ds + \frac{\textrm{cosh} (\sqrt{\gamma+1}x)}{\sqrt{\gamma+1}} \int_{x}^{x_2} f(s) e^{-\sqrt{\gamma+1}s} \, ds \notag \\
&\geq \frac{\beta e^{-\sqrt{\gamma+1}\, x}}{\sqrt{\gamma+1}} \int_{0}^{x}\textrm{cosh}(\sqrt{\gamma+1}\, s) \, ds \notag \\
&= \frac{\beta}{\gamma+1}e^{-\sqrt{\gamma+1}x}\textrm{sinh}(\sqrt{\gamma+1} \,x).
\end{align}
Similarly for $0 \leq x \leq x_2$, we obtain from the definition of $\L$ in \eqref{operator_L} that
\begin{align} \label{Lg_bound}
\L g (x) &=\frac{\textrm{cosh}(\sqrt{\gamma} x)}{\sqrt{\gamma}}  \int_{x_2}^{\infty} g(s) e^{-\sqrt{\gamma} s} \, ds \notag \\
&\leq \frac{(M+1)\textrm{cosh}(\sqrt{\gamma} x)}{\sqrt{\gamma}} \int_{x_2}^{\infty} e^{-\sqrt{\gamma} s} \, ds \notag \\
&= \frac{(M + 1)}{\gamma} e^{-\sqrt{\gamma} x_2}\textrm{cosh}(\sqrt{\gamma} x).  
\end{align}
Finally by plugging in \eqref{Lf_bound} and \eqref{Lg_bound} into \eqref{I_bound}, 
\begin{align}
I &\geq \frac{\beta^2}{\gamma+1} \int_{0}^{x_1} e^{-\sqrt{\gamma+1}\, x}\textrm{sinh}(\sqrt{\gamma+1} x) \, dx - \frac{2(M + 1)}{\gamma}e^{-\sqrt{\gamma} x_2} \int_{0}^{x_2} \textrm{cosh}(\sqrt{\gamma} x) \, dx \notag \\
&=  \frac{\beta^2}{2(\gamma+1)} \int_{0}^{x_1} (1-e^{-2\sqrt{\gamma+1}x})\, dx -\frac{2(M+1)}{\gamma^{3/2}} e^{-\sqrt{\gamma} x_2} \textrm{sinh}(\sqrt{\gamma} x_2) \notag \\
&= \frac{\beta^2}{2(\gamma+1)} \left(x_1 - \frac{1}{2\sqrt{\gamma + 1}}(1-e^{-2\sqrt{\gamma+1}x_1})  \right)- \frac{(M + 1)}{\gamma^{3/2}}\left (1-e^{-2\sqrt{\gamma}x_2} \right) \notag \\
&\geq  \frac{\beta^2}{2(\gamma+1)}x_1 - \frac{\beta^2}{4(\gamma+1)^{3/2}} - \frac{(M + 1)}{\gamma^{3/2}}\;. \label{I_estimate}
\end{align}

\noindent Next let us find an upper bound of $II$. {Recall from \eqref{expand_L} that $\L f \leq \frac{1}{\gamma}$; a similar calculation shows that $\L g \leq \frac{M+1}{\gamma}$}. Then
\begin{align} 
II \leq \frac{1}{\gamma}  \int_{0}^{x_2} \L g \,dx + {\frac{(M+1)}{\gamma}} \int_{x_2}^{\infty} \L f \, dx\;. \label{upp_bound}
 \end{align}
For $0 \leq x \leq x_2 \leq \infty$, it follows from \eqref{Lg_bound} that
\begin{align}
\int_{0}^{x_2} \L g \, dx &\leq  \frac{(M+1)}{\gamma} e^{-\sqrt{\gamma} x_2} \int_{0}^{x_2 }\textrm{cosh}(\sqrt{\gamma} x) \, dx  \notag \\
& \leq \frac{(M+1)}{2\gamma^{3/2}} \;. \label{Lg_int} 
\end{align}
At the same time when $x_2 \leq x < \infty$, we use the definition of $\L$ in \eqref{operator_L} to obtain
\begin{align*}
\L f(x)  &=  \frac{e^{-\sqrt{\gamma}x}}{\sqrt{\gamma}} \int_{0}^{x_2} f(s) \textrm{cosh}(\sqrt{\gamma}s) \, ds \\
&\leq \frac{e^{-\sqrt{\gamma}x}}{\gamma} \textrm{sinh}(\sqrt{\gamma}x_2),
\end{align*}
which implies that  
\begin{align}
\int_{x_2}^{\infty} \L f(x) \, dx  &\leq \frac{1}{\gamma}\textrm{sinh}(\sqrt{\gamma}x_2) \int_{x_2}^{\infty}e^{-\sqrt{\gamma}x} \, dx \notag  \\
&\leq \frac{1}{2\gamma^{3/2}}\;. \label{Lf_int}
\end{align}
Substituting \eqref{Lg_int} and \eqref{Lf_int} into \eqref{upp_bound},
\begin{equation} \label{II_estimate}
II \leq {\frac{(M+1)}{\gamma^{5/2}}}\;.
\end{equation}
Now using the bounds in \eqref{I_estimate} and \eqref{II_estimate} to estimate \eqref{upp_bound}, we get
\begin{equation*}
\integral w\N w \, dx \geq I - 4 \, II \geq \frac{\beta^2}{2(\gamma+1)}x_1 - \frac{\beta^2}{4(\gamma+1)^{3/2}} - \frac{(M+1)}{\gamma^{3/2}} - \frac{{4(M+1)}}{\gamma^{5/2}}\;.
\end{equation*}
Since $F \geq 0$ when $x \geq x_1$, with $\integral F(x) \, dx \, \geq F_{\min}x_1 : = F(1)\,x_1 = -\frac{(1-2\beta)}{12}x_1$, 
\begin{align}
J(w) &= \integral \Big \{ \frac{d}{2}w^{\prime2} + \frac{1}{2}w\N w + F(w) + \frac{1}{4}(\N w)^4 \Big \} \, dx \notag \\
&\geq \integral \{  F(w) + \frac{1}{2}w \N w  \} \, dx \notag \\
&\geq \left( -\frac{(1-2\beta)}{12} + \frac{\beta^2}{4(\gamma+1)}\right )x_1 - \frac{\beta^2}{8(\gamma+1)^{3/2}} - \frac{(M+1)}{2\gamma^{3/2}} - \frac{{2(M+1)}}{\gamma^{5/2}}\;. \label{J_below_ineq}
\end{align}
Observe that $ -\frac{(1-2\beta)}{12} + \frac{\beta^2}{4(\gamma+1)} > 0 $ for $\gamma < \gamma_0 = \frac{3\beta^2}{(1-2\beta)}-1$. Choosing $M_1 = \frac{\beta^2}{8(\gamma+1)^{3/2}} + \frac{(M+1)}{2\gamma^{3/2}} + \frac{{2(M+1)}}{\gamma^{5/2}}$, we establish (i).

{By Lemma~\ref{J_below} we can assume that a minimizing sequence $\wn$ satisfies $J(\wn) \leq -M_0 < 0$} by focusing on the tail of the sequence if needed. We can include the gradient term on the right hand side of \eqref{J_below_ineq}, doing so we have
\begin{align}
\frac{d}{2}\|\wn_{x}\|_{L^2}^2 + \left( -\frac{(1-2\beta)}{12} + \frac{\beta^2}{4(\gamma+1)}\right )x_1^{(n)} \leq M_1, \label{wx_bdd}
\end{align}
which implies that there is a positive constant $M_2=M_2(\gamma):= M_1 \big/ \left( -\frac{(1-2\beta)}{12} + \frac{\beta^2}{4(\gamma+1)}\right)$,  independent of $n$, such that 
$x_1^{(n)} \leq M_2$. Moreover, since the nonlocal term is non-negative and $F(\xi) \geq 0$ for $\xi \leq \beta_1$, 
\begin{align*}
{-\frac{1}{2}M_0} \geq \, J(\wn) &\geq \integral F(\wn) \, dx \\
&\geq \int_{\{x:\,\wn(x) \geq \beta_1\}} F(\wn) \, dx \\
&\geq -|F_{\min}| \, |\{x:\,\wn(x) \geq \beta_1\}|,
\end{align*}
which implies that $x_1^{(n)} \geq  \{x:\,\wn(x) \geq \beta_1\} \geq {6M_0/(1-2\beta)} :=m_2 >0$; hence there is always a non-trivial positive part of $\wn$. The proof of (ii) is complete. To show (iii), first observe from \eqref{wx_bdd} that $\|\wn_{x}\|_{L^2}$ is bounded for all $n$. Next, it follows from (ii) that
\begin{align*}
\int_{\{x:\,\wn(x) \geq \beta\}} (\wn)^2 \, dx \leq \int_{\{x:\wn(x) \geq \beta\}} 1 \, dx = x^{(n)}_1 \leq M_2.
\end{align*}
On $\{x:\wn(x) < \beta\}$, for there exists a $C_1 > 0$ independent of $d$ and $\gamma$ such that $F(\xi) \geq C_1\xi^2$,
\begin{align*}
\int_{\{x:\,\wn(x) < \beta\}} (\wn)^2 \, dx &\leq \frac{1}{C_1} \int_{\{x:\wn(x) < \beta\}} F(\wn) \, dx \\
&=  \frac{1}{C_1}\left\{ \integral F(\wn) \, dx - \int_{\{x:\wn(x) \geq \beta\}} F(\wn) \, dx  \right\} \\
&\leq \frac{1}{C_1}\left\{ J(\wn)  -  \int_{\{x:\wn(x) \geq \beta_1\}} F(\wn)  \, dx \right\} \\
&\leq \frac{1}{C_1}\{|F_{\min}|\, M_2\}.
\end{align*}
Therefore, $\|\wn\|_{L^2}$ is uniformly bounded for all $n$. This completes the proof that $\|\wn\|_{H^1}$ is uniformly bounded. 
\end{proof}

\section{Existence of a Minimizer} \label{sec_existence}
We now extract a minimizer $u_0 \in \A$ from the minimizing sequence obtained in {Section} \ref{sec_seqEstimate}. Due to the constraints imposed on the admissible set $\A$, $u_0$ may not satisfy (\ref{FN_nonlinear_steady}a) 
on the intervals where it identically equals to one of the constraints. To eliminate this possibility in the later sections, a truncation technique is used routinely in which 
we truncate $u_0$ to obtain a new function $\unew \in \A$ with $J(\unew) < J(u_0)$. With the help of the truncation technique, the constraint $-(M + 1)$ will be released in this section.

\begin{lem} \label{min_existence}
Suppose $\gamma \leq \gamma_0$ and $d \leq d_0$. Let $\{w^{(n)} \}_{n=1}^{\infty} \subset \A$ be a minimizing sequence of $J$. Then there exists
a $u_0 \in \A$ such that $\liminf J(w^{(n)}) \geq J(u_0)$. Moreover there exist $0 < x_1 < x_2 \leq \infty$ such that 
\begin{equation} \label{u0_range}
\begin{cases}
 	\beta \leq u_0(x) \leq 1 \textit{ for } x \in [0, x_1], \\
 	0 \leq u_0(x) \leq \beta \textit{ for } x \in [x_1, x_2], \\
	-(M + 1) \leq u_0(x) \leq 0 \textit{ for } x \in [x_2, \infty) \textit{ if } x_2 < \infty
	\end{cases}
\end{equation} 
{with $m_2 \leq x_1 \leq M_2$.}
\end{lem}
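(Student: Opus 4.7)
My plan is to use the direct method of the calculus of variations, combined with the a priori estimates already established in Lemma~\ref{min_sequence}. The uniform $H^1(0,\infty)$ bound on the minimizing sequence gives weak compactness, which produces a candidate limit $u_0$; then I would verify the constraints defining $\A$ survive the passage to the limit, and finally establish weak lower semicontinuity of $J$ on $\A$.

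First I would extract from $\{w^{(n)}\}$ a subsequence (not relabeled) with $w^{(n)} \rightharpoonup u_0$ weakly in $H^1(0,\infty)$; by the Rellich--Kondrachov embedding on each bounded interval and $H^1 \hookrightarrow C^{1/2}_{\text{loc}}$, after diagonalization I would have $w^{(n)} \to u_0$ locally uniformly and pointwise a.e. Next, using Lemma~\ref{min_sequence}(ii), I would pass to a further subsequence so that $x_1^{(n)} \to x_1 \in [m_2,M_2]$ and $x_2^{(n)} \to x_2 \in [x_1,\infty]$. Since for any $\varepsilon>0$ eventually $[0,x_1-\varepsilon] \subseteq [0,x_1^{(n)}]$, the constraint $\beta \leq w^{(n)} \leq 1$ transfers to $u_0$ on $[0,x_1-\varepsilon]$, hence on $[0,x_1]$ by continuity; identical arguments handle $[x_1,x_2]$ and (when $x_2<\infty$) $[x_2,\infty)$. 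This shows $u_0 \in \A$ and the range statement \eqref{u0_range}, with $m_2 \leq x_1 \leq M_2$.

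For lower semicontinuity of $J$, I would examine each term. The gradient term $\integral \tfrac{d}{2}(w')^2$ is weakly lsc by standard convexity. For the potential, I would split $F = F_+ - F_-$; since $F<0$ only on $(\beta_1,\beta_2)$ with $\beta_1 \in (\beta,1)$, the function $F_-(w^{(n)})$ is supported in $[0,x_1^{(n)}] \subseteq [0,M_2]$ and the same holds for $F_-(u_0)$, so dominated convergence on $[0,M_2]$ (using local uniform convergence and $x_1^{(n)} \to x_1$) gives $\int F_-(w^{(n)}) \to \int F_-(u_0)$, while Fatou applied to $F_+ \geq 0$ gives $\liminf \int F_+(w^{(n)}) \geq \int F_+(u_0)$. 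For the nonlinear nonlocal term $\int w\,\N w$, I would use the identity
\[
\integral w^{(n)}\N w^{(n)} - \integral u_0\N u_0 = \integral (w^{(n)}{-}u_0)(\N w^{(n)}{-}\N u_0) + \integral (w^{(n)}{-}u_0)\N u_0 + \integral u_0(\N w^{(n)}{-}\N u_0):
\]
the first summand is nonnegative by Lemma~\ref{nonlocal_pos}, the second vanishes since $w^{(n)} \rightharpoonup u_0$ weakly in $L^2$ and $\N u_0 \in L^2$, and the third vanishes by Lemma~\ref{Nu_convergence}. For $\integral (\N w^{(n)})^4$, I would use Lemma~\ref{w_and_Nw} to get $\|\N w^{(n)}\|_{H^1}$ uniformly bounded, extract $\N w^{(n)} \rightharpoonup V$ weakly in $H^1$, and identify $V = \N u_0$ by passing to the limit in the weak form of (\ref{FN_nonlinear_steady}b) (the cubic term $(\N w^{(n)})^3$ is handled by uniform $L^\infty$ bound plus local strong $L^2$ convergence) and invoking uniqueness from Lemma~\ref{v_variation}(iv); then pointwise a.e. convergence and Fatou yield $\liminf \int (\N w^{(n)})^4 \geq \int (\N u_0)^4$. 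Summing these bounds gives $\liminf J(w^{(n)}) \geq J(u_0)$.

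The main obstacle I anticipate is the nonlocal term on the infinite domain. Specifically, the identification $V = \N u_0$ requires passing to the limit in a cubic nonlinearity $(\N w^{(n)})^3 \varphi$ tested against arbitrary $\varphi \in H^1$, and the splitting-plus-monotonicity argument above is what allows $\int w^{(n)} \N w^{(n)}$ to be controlled from below without assuming strong $L^2$ convergence on the whole half-line, which we do not have a priori. These two observations, both leaning crucially on the preparatory results of Section~\ref{sec_prelim}, are what make the otherwise standard direct-method argument go through in this skew-gradient setting with nonlinear inhibitor.
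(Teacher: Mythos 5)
Your proof is correct and follows essentially the same direct-method route as the paper, relying on the same key lemmas: the uniform $H^1$ bound and parameter estimates from Lemma~\ref{min_sequence}, the monotonicity/positivity of Lemma~\ref{nonlocal_pos}, and the convergence of Lemma~\ref{Nu_convergence}. The cosmetic differences are minor. For the potential term the paper splits the \emph{domain} at $x_1$ (dominated convergence on $[0,x_1]$, Fatou on $[x_1,\infty)$), while you split the \emph{integrand} as $F = F_+ - F_-$ and use that $F_-(w^{(n)})$ is supported in the uniformly bounded set $[0,M_2]$; both work and exploit the same structure of $\A$. For the bilinear part of the nonlocal term, your three-way algebraic identity is a rearrangement of the inequality the paper extracts from Lemma~\ref{nonlocal_pos}, and the same two lemmas dispose of the cross terms. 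Your most valuable contribution is the treatment of the quartic piece $\integral (\N w^{(n)})^4\,dx$: the paper obtains $\liminf \integral w^{(n)}\N w^{(n)}\,dx \geq \integral u_0 \N u_0\,dx$ and then asserts the combined inequality \eqref{nonlocal_conv} as ``clear,'' but the first statement alone does not yield lower semicontinuity of the quartic term. Your explicit step --- uniform $H^1$ bound on $\N w^{(n)}$ via Lemma~\ref{w_and_Nw}, extraction of a weak limit $V$, passage to the limit in the weak form of (\ref{FN_nonlinear_steady}b) using the uniform $L^\infty$ bound to control the cubic term, identification $V = \N u_0$ by the uniqueness in Lemma~\ref{v_variation}(iv), and then Fatou from a.e.\ convergence --- is exactly the argument needed to make that ``clear'' rigorous, and is the one genuine gap in the paper's exposition that your write-up fills in.
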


\begin{proof}
By Lemma~\ref{min_sequence} there is a minimizing sequence $\{w^{(n)} \}_{n=1}^{\infty}$ such that $\lim J(w^{(n)}) = \inf_{w\in\A} J(w)$ with $\|w^{(n)}\|_{H_1(0, \infty)}$ uniformly bounded in $n$; this sequence is therefore compact in the weak topology. By choosing a subsequence, still denoted by $\{w^{(n)} \}$, there exists a $u_0 \in H^1(0, \infty)$ such that $w^{(n)} \rightharpoonup u_0$ weakly in $H^1(0, \infty)$ and strongly in $L_{loc}^\infty(0, \infty)$. As a consequence of Lemma~\ref{min_sequence}, \eqref{u0_range} holds with $m_2 \leq x_1 \leq M_2$ and $u_0 \in \A$.

Next we show that the weakly convergent subsequence satisfies $\liminf J(w^{(n)}) \geq J(u_0)$. The weak convergence in $H^1$ implies that
\begin{equation} \label{grad_conv}
 \liminf \integral ({w}^{(n)\prime})^2 \, dx \geq \integral (u_0^{\prime})^2 \, dx.
 \end{equation}
Since $\wn \ra u_0$ on $[0, x_1]$ and $\|F(\wn)\|_{L^{\infty}(0, \infty)} < \infty$ for $\wn \in \A$, 
\begin{equation*}
\lim \int_{0}^{x_1}F(w^{(n)}) \, dx = \int_{0}^{x_1} F(u_0) \, dx.
\end{equation*}
Moreover since $\wn \leq \beta$ on $[x_1, \infty)$, Fatou's lemma implies that 
\begin{equation*}
\liminf \int_{x_1}^{\infty} F(\wn) \, dx \geq \int_{x_1}^{\infty} F(u_0) \, dx.
\end{equation*} 
Therefore we conclude that
\begin{equation} \label{potential_conv}
\liminf \integral F(\wn) \, dx \geq \integral F(u_0) \, dx.
\end{equation}

It remains to treat the nonlocal term.  
With $w_1 = \wn$ and $w_2 = u_0$, it follows from Lemma~\ref{nonlocal_pos} that
\[ \integral (\wn\N\wn + u_0\N u_0) \, dx \geq  \integral (u_0\N\wn + \wn\N u_0 ) \, dx. \]
Since $\integral u_0 \N\wn \, dx \ra \integral u_0\N u_0 \, dx$ by Lemma~\ref{Nu_convergence}
and $\integral \wn\N u_0 \, dx$ goes to the same limit because $\wn \rightharpoonup u_0$ weakly in $L^2$, 
\begin{equation*}
 \liminf \integral \wn\N\wn \, dx + \integral u_0\N u_0 \, dx \geq 2\integral u_0\N u_0 \, dx.
\end{equation*}
Then it is clear that
\begin{equation} \label{nonlocal_conv}
\liminf \integral \left(\wn\N\wn + (\N\wn)^4\right)  \, dx \geq \integral \left( u_0\N u_0 + (\N u_0)^4 \right) \, dx.
\end{equation}
Combining \eqref{grad_conv}, \eqref{potential_conv} and \eqref{nonlocal_conv}, $\liminf J(w^{(n)}) \geq J(u_0)$ follows immediately. Therefore $u_0$ is a minimizer of $J$ satisfying $J(u_0) = \inf_{\A} J$.
\end{proof}

\begin{lem} \label{nonlocal_difference}
Let $u_0$ be changed to $\unew \in A$. Then the change in the nonlocal term is
\begin{align*}
&\integral \left \{ \left ( \frac{1}{2} \unew \N\unew + \frac{1}{4}(\N\unew)^4 \right) - \left ( \frac{1}{2}u_0\N u_0 + \frac{1}{4}  (\N u_0)^4 \right) \right \} \, dx \notag \\
&\quad = \frac{1}{2}\integral (\unew - u_0)(\N\unew + \N u_0) \, dx +  \frac{1}{4} \integral(\N\unew + \N u_0)(\N \unew - \N u_0)^3 \, dx. 
\end{align*}
Moreover 
\begin{equation*}
\left \lvert \frac{1}{4} \integral(\N\unew + \N u_0)(\N \unew - \N u_0)^3  \, dx  \right \rvert 
 \leq { \frac{(M +1)^2}{4} \max \{1, \frac{1}{\gamma^2} \}}\, \integral(\unew - u_0)^2 \, dx.
\end{equation*}
\end{lem}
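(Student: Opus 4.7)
The plan is to set $v_0 = \N u_0$ and $\vnew = \N \unew$, so that both $(u_0, v_0)$ and $(\unew, \vnew)$ satisfy (\ref{FN_nonlinear_steady}b) together with the natural boundary condition at $x=0$ and decay at infinity. The proof splits cleanly into an exact algebraic identity for the change in the nonlocal term and a subsequent pointwise $L^\infty$--$L^2$ estimate; essentially all the work lies in the identity.

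To derive the identity, I would first test each inhibitor equation against its own $v$, as in Lemma~\ref{rewriteF}, to obtain $\integral u v \, dx = \integral ((v')^2 + \gamma v^2 + v^4) \, dx$ for each pair. Subtracting the two resulting relations yields
\[
\integral \left\{\tfrac{1}{2}\unew\vnew + \tfrac{1}{4}\vnew^4 - \tfrac{1}{2}u_0 v_0 - \tfrac{1}{4}v_0^4\right\} dx = \tfrac{1}{2}\integral((\vnew')^2 - (v_0')^2)\, dx + \tfrac{\gamma}{2}\integral(\vnew^2-v_0^2)\, dx + \tfrac{3}{4}\integral(\vnew^4 - v_0^4)\, dx.
\]
In parallel, I would subtract the two PDEs themselves and test against $\vnew + v_0$; after integration by parts (boundary terms vanish by the natural condition at $0$ and decay at infinity) this gives the cross-relation
\[
\integral (\unew - u_0)(\vnew + v_0)\, dx = \integral((\vnew')^2 - (v_0')^2)\, dx + \gamma\integral(\vnew^2 - v_0^2)\, dx + \integral(\vnew^3 - v_0^3)(\vnew + v_0)\, dx.
\]
Substituting this to eliminate the gradient and $\gamma$-weighted quadratic pieces from the previous display, and then invoking the polynomial identity
\[
-\tfrac{1}{2}(\vnew^3 - v_0^3)(\vnew + v_0) + \tfrac{3}{4}(\vnew^4 - v_0^4) = \tfrac{1}{4}(\vnew + v_0)(\vnew - v_0)^3,
\]
which is verified by direct expansion of both sides, delivers the stated formula.

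For the pointwise estimate, note that $\unew, u_0 \in \A$, so Lemma~\ref{v_upperbnd} provides $|\vnew|, |v_0| \leq M+1$ and hence $|\vnew^2 - v_0^2| \leq (M+1)^2$. Using the factorisation $(\vnew + v_0)(\vnew - v_0)^3 = (\vnew^2 - v_0^2)(\vnew - v_0)^2$, I would bound
\[
\left|\tfrac{1}{4}\integral (\vnew + v_0)(\vnew - v_0)^3 \, dx\right| \leq \tfrac{(M+1)^2}{4}\integral (\vnew - v_0)^2 \, dx,
\]
and then invoke the $L^2$-Lipschitz estimate in Lemma~\ref{difference} to get $\integral(\vnew - v_0)^2 \, dx \leq \max\{1, 1/\gamma^2\}\integral(\unew - u_0)^2\, dx$. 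I expect the main obstacle to be nothing more than careful sign bookkeeping in the algebraic step, made slightly delicate by the mixture of cubic and linear $v$-terms in the inhibitor equation; once the factorisation is in hand, the $L^2$-bound is essentially immediate from Lemmas~\ref{v_upperbnd} and~\ref{difference}.
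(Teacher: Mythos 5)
Your proof is correct, and it reaches the identity by a slightly different route than the paper. The paper tests the equation for $\vnew=\N\unew$ against $v_0=\N u_0$ and vice versa, subtracts, and obtains the reciprocity relation $\integral (u_0\vnew - \unew v_0)\,dx = \integral (v_0^3\vnew - \vnew^3 v_0)\,dx$; it then splits $\tfrac12(\unew\vnew - u_0v_0)$ into the symmetric piece $\tfrac12(\unew-u_0)(\vnew+v_0)$ plus the antisymmetric piece $\tfrac12(u_0\vnew - \unew v_0)$ and factors the remainder into $\tfrac14(\vnew+v_0)(\vnew-v_0)^3$. You instead test each equation against its own solution and test the difference of the two equations against $\vnew+v_0$, then cancel the gradient and $\gamma$-quadratic terms; your polynomial identity $-\tfrac12(\vnew^3-v_0^3)(\vnew+v_0)+\tfrac34(\vnew^4-v_0^4)=\tfrac14(\vnew+v_0)(\vnew-v_0)^3$ checks out by direct expansion, and the boundary terms you discard do vanish thanks to the natural boundary condition $v'(0)=0$ and decay at infinity. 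The two routes are equivalent in substance --- both are integration-by-parts manipulations of the weak forms culminating in the same quartic factorization --- but yours avoids the cross-testing step at the cost of carrying the $(v')^2$ and $\gamma v^2$ terms through an intermediate cancellation. Your $L^\infty$--$L^2$ estimate, via $(\vnew+v_0)(\vnew-v_0)^3=(\vnew^2-v_0^2)(\vnew-v_0)^2$, Lemma~\ref{v_upperbnd} and Lemma~\ref{difference}, is exactly the paper's argument.
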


\begin{proof}
Set $v_0 = \N u_0$ and $v_{new} = \N\unew$. Then 
\begin{equation}
\integral \left(\vnew^{\prime\prime} - \gamma\vnew - \vnew^3 \right)v_0\, dx = \integral -\unew v_0\, dx, \label{vnew_weak}
\end{equation}
\begin{equation}
\integral \left(v_0^{\prime\prime} - \gamma v_0 - v_0^3 \right)\vnew \, dx = \integral -u_0\vnew  \, dx. \label{v0_weak}
\end{equation}
After integrating by parts each equation, we subtract one from the other to get
\begin{equation}
\integral (u_0\vnew - \unew v_0) \, dx =\integral (v_0^3\vnew - \vnew^3v_0) \, dx. \label{lin_to_cub}
\end{equation}
Observe that
\begin{align}
&\integral \left ( \frac{1}{2} \unew \vnew + \frac{1}{4}\vnew^4 - \frac{1}{2}u_0 v_0 - \frac{1}{4} v_0^4 \right)  \, dx \notag \\
&\quad = \integral  \left \{ \frac{1}{2}(\unew - u_0)(\vnew + v_0) + \frac{1}{2}(u_0\vnew - \unew v_0) + \frac{1}{4}(\vnew^4 - v_0^4) \right \} \, dx. \label{nonlocal_estim}
\end{align}
For the last two term in the integral, it follows from \eqref{lin_to_cub} that
\begin{align*}
& \integral \left \{ \frac{1}{2}(u_0\vnew - \unew v_0) + \frac{1}{4}(\vnew^4 - v_0^4) \right \} \, dx  \\
&\quad =  \integral \left \{\frac{1}{2}v_0\vnew(v_0^2 - \vnew^2) + \frac{1}{4}(\vnew^2 + v_0^2)(\vnew^2 - v_0^2) \right \} \, dx  \\
&\quad = \integral \frac{1}{4} (\vnew + v_0)(\vnew - v_0)^3 \, dx
\end{align*}
and therefore, our first inequality holds. To show the next inequality, note that $\| \vnew -v_0 \|_{L^2} \leq \max \{1, 1/\gamma \} \; \| \unew -u_0 \|_{L_2}$ from Lemma~\ref{difference}. Together with  $\|\vnew\|_{L^{\infty}} \leq M+1$ and $\|v_0\|_{L^{\infty}} \leq M+1$ from  Lemma~\ref{v_upperbnd}, we obtain
\begin{align*}
\left \lvert \frac{1}{4} \integral(\vnew + v_0)(\vnew - v_0)^3 \, dx \right \rvert  & \leq  \, \frac{1}{4} \integral \max\{ \vnew^2, v_0^2\} \,(\vnew - v_0)^2 \, dx \\
&\leq { \frac{(M +1)^2}{4} \max \{1, \frac{1}{\gamma^2} \}}\, \integral(\unew - u_0)^2 \, dx.
\end{align*}
\end{proof}

\begin{lem} \label{M1_const}
Let $d \leq d_0$ and $u_0$ be a minimizer obtained in Lemma~\ref{min_existence}. Then $\min u_0 \geq -M$.
\end{lem}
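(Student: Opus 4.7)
My strategy is a one-sided truncation variation. Suppose, for a contradiction, that $E := \{x \geq 0 : u_0(x) < -M\}$ has positive measure. I plan to produce $\varphi \in H^1(0, \infty)$ with $\varphi \geq 0$ such that $u_0 + \epsilon\varphi \in \A$ for every $\epsilon \in [0,1]$ but $\hat{J}'(u_0)\varphi < 0$, contradicting the minimality of $u_0$ over $\A$.

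Concretely, I set $\varphi := \max\{u_0, -M\} - u_0$, so that $\varphi = -M - u_0 \in (0, 1]$ on $E$ (the upper bound using $u_0 \geq -(M+1)$) and $\varphi = 0$ elsewhere. Because $u_0 \geq 0$ on $[0, x_2]$, the set $E$ lies in $(x_2, \infty)$; on $E$ the perturbation $u_0 + \epsilon\varphi = (1-\epsilon)u_0 - \epsilon M$ lies in $[u_0, -M] \subset [-(M+1), 0]$ for $\epsilon \in [0,1]$, so $u_0 + \epsilon\varphi \in \A$ with the same transition points $x_1, x_2$ as $u_0$. Minimality therefore delivers $\hat{J}'(u_0)\varphi \geq 0$. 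Repeating the Euler--Lagrange computation already carried out in Section~\ref{sec_varForm} --- choosing $\hat{v} = \N'(u_0)\varphi$ as test function in the weak form of (\ref{FN_nonlinear_steady}b) makes the four nonlocal contributions in $\hat{J}'(u_0)\varphi$ collapse in pairs --- one obtains
\[
\hat{J}'(u_0)\varphi = \integral \bigl[ d\, u_0'\, \varphi' + \bigl(\N u_0 - f(u_0)\bigr)\varphi \bigr]\, dx.
\]

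On $E$, $\varphi' = -u_0'$ a.e., so the first term contributes $-d\int_E (u_0')^2\, dx \leq 0$. Pointwise on $E$, Lemma~\ref{v_upperbnd} supplies $\N u_0 \leq 1$ and the definition of $M$ forces $f(u_0) \geq 1 + 1/\gamma$, so $\N u_0 - f(u_0) \leq -1/\gamma$; combined with $\varphi > 0$ on $E$, the second integral is bounded above by $-(1/\gamma)\int_E (-M - u_0)\, dx$, which is strictly negative once $|E|>0$. This gives $\hat{J}'(u_0)\varphi < 0$, the required contradiction. Therefore $|E| = 0$, and continuity of $u_0$ (from $H^1 \hookrightarrow C^{1/2}$) upgrades this to $u_0 \geq -M$ on $[0,\infty)$. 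The only point requiring care is the admissibility check, which is precisely why $\varphi$ is defined by truncation against $-M$: it keeps $u_0 + \epsilon\varphi$ inside the slab $[-(M+1), 0]$ on $(x_2, \infty)$ without disturbing the $[0, x_1]\cup[x_1, x_2]\cup(x_2, \infty)$ partition that defines $\A$.
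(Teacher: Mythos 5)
Your proof is correct, and it takes a genuinely different route from the paper's. The paper constructs an explicit truncation $u_{\mathrm{new}}$ (cutting $u_0$ off at level $\min u_0 + \delta$) and directly estimates the \emph{finite} difference $J(u_{\mathrm{new}}) - J(u_0)$; it then has to invoke the nonlocal-difference lemma (Lemma~\ref{nonlocal_difference}), tune $\delta$ to absorb the quadratic remainder of the nonlocal term, and use convexity of $F$ on $(-\infty,0]$ to conclude $J(u_{\mathrm{new}}) < J(u_0)$. You instead compute a one-sided \emph{derivative}: since $u_0 + \epsilon\varphi$ stays in $\A$ for $\epsilon\in[0,1]$ with $\varphi = \max\{u_0,-M\}-u_0 \geq 0$ supported on $E$, minimality forces $\hat J'(u_0)\varphi\ge 0$, and the Euler--Lagrange reduction from Section~\ref{sec_varForm} collapses the expression to $\integral\{d u_0'\varphi' + (\N u_0 - f(u_0))\varphi\}\,dx$, which is manifestly negative on $E$ by $\N u_0 \le 1$ (Lemma~\ref{v_upperbnd}) and the defining property $f(\xi)\ge 1 + 1/\gamma$ for $\xi\le -M$. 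What your approach buys: the quadratic remainder estimates and the ``choose $\delta$ smaller'' bookkeeping disappear entirely, because a first-order argument only needs leading-order information. What the paper's approach buys: it is a purely finite-comparison argument and so does not lean on the Fr\'echet differentiability of $\hat J$ or on the $\N'$ cancellation machinery, which is slightly more robust; it also exhibits a concrete competitor, in keeping with the truncation style used throughout Sections~\ref{sec_existence}--\ref{sec_constraintBeta}. Two small points worth making explicit in your write-up: (1) $E$ is bounded because $u_0\in H^1$ implies $u_0\to 0$, so $\varphi$ is compactly supported and in $H^1$; (2) $\varphi' = -u_0'$ a.e.\ on $E$ uses the standard chain rule for $\max\{u_0,-M\}$ together with $u_0'=0$ a.e.\ on $\{u_0=-M\}$.
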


\begin{proof}
Suppose $\min u_0 < -M$. Take a small positive $\delta < 1$ so that $-M \geq \min u_0  +\delta$.
Consider a truncated function 
\begin{equation*}
\unew =
\begin{cases}
u_0,  \quad \quad \qquad\,  \text{ if } u_0 \geq \min u_0 + \delta,\\
\min u_0 + \delta, \;\; \text{ if } u_0 < \min u_0 + \delta,
\end{cases}
\end{equation*}
so that $\unew - u_0 \equiv p(x) \leq \delta$. Then $\unew \in \A$. 
We now define a positive constant $M_\delta : = -(\min u_0 + \delta)$ to simplify the notation.
Since the energy associated with the gradient term decreases by the change,
\begin{eqnarray*}
J(\unew) - J(u_0) &<& \int_{\{x: u_0 \leq -M_\delta\}} \{F(\unew) - F(u_0)\} \,dx \\
&&\quad + \integral \left \{ \frac{1}{2}\left( \unew \N \unew - u_0\N u_0 \right) + \frac{1}{4} \left ( (\N\unew )^4 - (\N u_0)^4 \right )\right\} \, dx
\end{eqnarray*}
and applying Lemma~\ref{nonlocal_difference} gives
\begin{eqnarray*}
J(\unew) - J(u_0) &<&  \int_{\{x: u_0 \leq -M_\delta\}} \{F(\unew) - F(u_0) + \frac{p}{2}\left(\N\unew + \N u_0 \right)\} \, dx \\
&&\quad + \frac{1}{4}\integral \left( \N\unew + \N u_0\right)\left(\N\unew-\N u_0\right)^3 \, dx  \\
&\leq&  \int_{\{x: u_0 \leq -M_\delta\}} \{F(\unew) - F(u_0)\} + p + {\frac{(M+1)^2}{4}\max\{ 1, \frac{1}{\gamma^2} \}} \, p^2  \} \, dx.
\end{eqnarray*}
By choosing $\delta$ smaller if necessary, we can ensure that  {$\frac{(M+1)^2}{4}\max \{1, 1/\gamma^2 \} \, p \leq 1/2 \gamma$.}
The convexity of $F(\xi)$ for $\xi \leq 0$ then implies that
\begin{eqnarray*}
J(\unew) - J(u_0) < \int_{\{x: u_0 \leq -M_\delta \}} p\left \{F^{\prime}(\unew) + 1 + \frac{1}{2\gamma} \right \} \, dx. 
\end{eqnarray*}
As $M_\delta \geq M$, it is immediate from the definition of $M$ that $F'(\unew) = -f(\unew) = -f(-M_\delta) \leq -1  -\frac{1}{\gamma}$ on the set $\{x: u_0 \leq -M_\delta \}$. Therefore $J(\unew) - J(u_0) < 0.$ This contradicts the fact that $u_0$ is a minimizer in $\A$. 
\end{proof}

By Lemma~\ref{M1_const}, the minimizer $u_0$ is greater than $-(M + 1)$. Away from where $u_0$ equals $0, \beta$ or $1$, we can perturb $u_0$ by $C_0^{\infty}$ functions with small support to ensure that the perturbed function still lies inside $\A$. Setting $v_0 = \N u_0$, we can conclude after regularity bootstrap that $v_0 \in C^3[0,\infty)$. Moreover
$u_0 \in C^2$ and satisfies (\ref{FN_nonlinear_steady}a) except where $u_0$ equals $0, \beta$ or $1$. 

\section{Corner lemma} \label{sec_corner}
\setcounter{equation}{0}
\renewcommand{\theequation}{\thesection.\arabic{equation}}

To establish that $(u_0, \N u_0)$ is a standing pulse solution of \eqref{FN_nonlinear_steady}, we need to eliminate the possibility of an interval where $u_0$ equals $0, \beta$ or $1$. This requires a better understanding of the qualitative properties of $u_0$. In this section, we investigate the derivatives of the minimizer $u_0$ of $J$. From now on, $u_0$ always stands for the minimizer of $J$ and $v_0 = \N u_0$.

\begin{lem} \label{regularity}
Let $x_0 > 0$ and $\ell \in (0, x_0).$ If $u_0(x) \notin \{0, \beta, 1\}$ for $x \in [x_0-\ell, x_0)$ and $u_0(x_0) \in \{0, \beta, 1\}$, then both $\lim_{x \to x_0^{-}} u'_0(x)$ and $\lim_{x \to x_0^{-}} u''_0(x)$ exist. Moreover $u_0$ can be extended to a $C^\infty[x_0-\ell, x_0]$ function, satisfying (\ref{FN_nonlinear_steady}a) on $[x_0-\ell, x_0]$. A similar statement holds on the interval $(x_0, x_0+\ell]$.
\end{lem}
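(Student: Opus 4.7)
The plan is to first promote the minimality of $u_0$ into a classical ODE on the open interval $(x_0-\ell, x_0)$, then read off the one-sided limits of $u_0''$ and $u_0'$ from that ODE, and finally bootstrap regularity through the coupling with $v_0 = \N u_0$.

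\textbf{Step 1 (Classical Euler--Lagrange on the open interval).} Because $u_0(x) \notin \{0,\beta,1\}$ for every $x \in [x_0-\ell, x_0)$ and $u_0$ is continuous (by Sobolev embedding and Lemma~\ref{u_embedding}), about each such $x$ there is an open neighborhood in which $u_0$ stays strictly inside one of the three bands $(\beta,1)$, $(0,\beta)$, or $(-(M{+}1),0)$ defining $\A$. Perturbations $u_0 + t\varphi$ with $\varphi \in C_0^\infty$ supported in that neighborhood therefore remain in $\A$ for $|t|$ small, so the first-variation identity $\hat{J}'(u_0)\varphi = 0$ (computed in Section~\ref{sec_varForm}) holds for all such $\varphi$. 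Since $v_0 \in H^3(0,\infty) \subset C^2[0,\infty)$ by Lemma~\ref{v_variation}, standard elliptic regularity for the one-dimensional ODE gives $u_0 \in C^2(x_0-\ell, x_0)$ together with the pointwise relation
\begin{equation*}
d\, u_0''(x) + f(u_0(x)) - v_0(x) = 0, \qquad x \in (x_0-\ell, x_0).
\end{equation*}

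\textbf{Step 2 (One-sided limits at $x_0$).} Rewriting the ODE as $u_0''(x) = \bigl(v_0(x) - f(u_0(x))\bigr)/d$, the right-hand side is continuous on $[x_0-\ell, x_0]$ since $v_0$ is continuous on $[0,\infty)$ and $u_0$ is continuous on $[x_0-\ell, x_0]$. In particular $\lim_{x \to x_0^-} u_0''(x) = \bigl(v_0(x_0) - f(u_0(x_0))\bigr)/d$ exists, and $u_0''$ is bounded on a left neighborhood of $x_0$. Writing $u_0'(x) = u_0'(x_0 - \ell/2) + \int_{x_0-\ell/2}^x u_0''(s)\,ds$, the integral converges as $x \to x_0^-$, so $\lim_{x \to x_0^-} u_0'(x)$ also exists. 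Defining $u_0'(x_0)$ and $u_0''(x_0)$ as these limits and using $u_0(x_0)$ already available from continuity, the extended $u_0$ lies in $C^2[x_0-\ell, x_0]$ and satisfies (\ref{FN_nonlinear_steady}a) on the closed interval because every term in the equation is continuous.

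\textbf{Step 3 ($C^\infty$ bootstrap).} Regularity is now upgraded by alternating the two equations on $[x_0-\ell, x_0]$. From (\ref{FN_nonlinear_steady}b) written as $v_0'' = \gamma v_0 + v_0^3 - u_0$, if $u_0 \in C^k$ then $v_0'' \in C^k$, hence $v_0 \in C^{k+2}$. Feeding this into $u_0'' = \bigl(v_0 - f(u_0)\bigr)/d$ and using smoothness of $f$ gives $u_0 \in C^{k+2}$. Starting from $k = 2$ and iterating yields $u_0 \in C^\infty[x_0-\ell, x_0]$. The same argument applied with the roles of the left and right sides reversed handles the interval $(x_0, x_0+\ell]$.

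The only nontrivial point is Step~1: one must verify that the admissibility bands of $\A$ leave genuine room for a two-sided perturbation, which is exactly what the hypothesis $u_0(x) \notin \{0,\beta,1\}$ guarantees. Once the classical ODE is in hand, the remaining steps are standard ODE bootstrapping.
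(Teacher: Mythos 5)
Your proposal is correct and follows essentially the same route as the paper: the paper likewise starts from the already-established fact that $u_0\in C^2$ and satisfies (\ref{FN_nonlinear_steady}a) away from the constraint values (your Step 1 just re-derives this perturbation argument in more detail), then reads off the limit of $u_0''$ from the continuity of $(v_0-f(u_0))/d$, recovers the limit of $u_0'$ by integrating, and bootstraps to $C^\infty$.
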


\begin{proof}
We only consider $u_0(x_0) = \beta$ and $u_0(x) \neq \beta$ on $[x_0 - \ell, x_0)$; the proof for the other cases are not different. Observe that $u_0 \in C^2[x_0 - \ell, x_0) \cap C[x_0-\ell, x_0]$ and $(u_0, v_0)$ satisfies (\ref{FN_nonlinear_steady}a) on $[x_0 - \ell, x_0)$. It is clear from (\ref{FN_nonlinear_steady}a) that $|u_0^{\prime\prime}|$ is 
bounded on $[x_0-\ell, x_0)$, and  $\lim_{x \to x_0^{-}} u''_0(x) = \frac{1}{d} \lim_{x \to x_0^{-}} (v_0(x)-f(u_0(x))$ exists. 
In view of 
\begin{equation*}
\lim_{x \to x_0^{-}} u'_0(x) =  u'_0(x_0 - \ell) + \lim_{x \to x_0^{-}} \int_{x_0 - \ell}^{x} u''_0(t) \, dt, 
\end{equation*}
the boundedness of the integrand guarantees that the limit exists. Hence $u_0 \in C^2[x_0 - \ell, x_0]$ and satisfies (\ref{FN_nonlinear_steady}a) on $[x_0 - \ell, x_0]$. 
Using typical regularity bootstrap by differentiating (\ref{FN_nonlinear_steady}a), we conclude that $u_0 \in C^{\infty}[x_0 - \ell, x_0]$.  
\end{proof}

Following a similar idea in \cite{Chen:2012}, the next lemma excludes the possibility of a sharp corner in the profile of $u_0$. 

\begin{lem} \label{corner_lemma}
Suppose $x_0$ and $\ell$ are positive numbers such that $u_0(x_0) \in \{0, \beta, 1\}$ and $u_0 \in C^1[x_0 - \ell, x_0] \cap C^1[x_0, x_0 + \ell]$. Then $\lim_{x \to x_0^{-}} u'_0(x) = \lim_{x \to x_0^{+}} u'_0(x)$.
\end{lem}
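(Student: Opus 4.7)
The plan is a proof by contradiction: assume $a \equiv \lim_{x\to x_0^-} u_0'(x) \neq \lim_{x\to x_0^+} u_0'(x) \equiv b$ and construct a competitor $\unew \in \A$ with $J(\unew) < J(u_0)$, contradicting Lemma~\ref{min_existence}. The natural competitor is the piecewise definition that replaces $u_0$ on $[x_0-\eta, x_0+\eta]$ by the straight line segment connecting $u_0(x_0-\eta)$ to $u_0(x_0+\eta)$, and agrees with $u_0$ elsewhere. The slope of this segment is $s_\eta = (u_0(x_0+\eta)-u_0(x_0-\eta))/(2\eta) = (a+b)/2 + o(1)$ by the one-sided $C^1$ regularity.

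The admissibility check is straightforward: $\unew$ is linear between two values lying in the respective ranges required by \eqref{admissible_set} (both $\leq 1$ near a level $u_0(x_0) \in \{0,\beta,1\}$, and similarly for the lower bounds), so the corresponding constraint at that level is preserved; the threshold points $x_1$ (or $x_2$) in the definition of $\A$ can simply be redefined as the unique crossing point of the line segment if its endpoints straddle $u_0(x_0)$, so $\unew\in\A$ for every small $\eta>0$.

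The energy comparison splits into the three terms of $J$. The gradient term gives the decisive gain:
\begin{align*}
\int_{x_0-\eta}^{x_0+\eta} \frac{d}{2}\left[(\unew')^2 - (u_0')^2\right]dx
&= d\eta\, s_\eta^2 - \frac{d\eta}{2}(a^2+b^2) + o(\eta) \\
&= -\frac{d}{4}(a-b)^2\,\eta + o(\eta),
\end{align*}
which is strictly negative of order $\eta$. For the potential term, since $|\unew - u_0| = O(\eta)$ on an interval of length $2\eta$ and $F$ is Lipschitz on bounded sets, $\int |F(\unew) - F(u_0)| dx = O(\eta^2)$. For the nonlocal term I apply Lemma~\ref{nonlocal_difference}: the first piece $\tfrac12\int(\unew-u_0)(\N\unew+\N u_0)\,dx$ is $O(\eta^2)$ because $|\N\unew+\N u_0|\leq 2(M+1)$ by Lemma~\ref{v_upperbnd} and $\|\unew-u_0\|_{L^1}=O(\eta^2)$, while the second piece is bounded by a constant times $\|\unew-u_0\|_{L^2}^2 = O(\eta^3)$. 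Combining these,
\begin{equation*}
J(\unew) - J(u_0) = -\frac{d}{4}(a-b)^2\,\eta + O(\eta^2),
\end{equation*}
which is negative for all sufficiently small $\eta>0$, contradicting the minimality of $u_0$ in $\A$.

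The main obstacle I expect is the admissibility bookkeeping when $u_0(x_0)$ is a constraint value, since one must verify that the new $x_1$ or $x_2$ can be chosen so that the pointwise inequalities defining $\A$ remain satisfied in every possible local geometry (for instance $u_0\geq\beta$ on both sides of $x_0$, versus the case where $x_0$ is a transition); once this is handled, the asymptotic estimates above are routine and essentially identical in spirit to the classical corner-rounding argument of \cite{Chen:2012}.
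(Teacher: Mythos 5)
Your proposal is correct and follows essentially the same corner-rounding argument as the paper: replace $u_0$ on $[x_0-\eta, x_0+\eta]$ by the chord, compute the strict $O(\eta)$ decrease in the gradient term, and show the potential and nonlocal changes are $O(\eta^2)$ via Lemma~\ref{nonlocal_difference}. The estimates and the conclusion match the paper's proof in every essential respect.
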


\begin{proof}
We first prove the case $u_0(x_0) = 0$. Suppose $\lim_{x \ra x_0^-} u^{\prime}_0 = a_1$ and $\lim_{x \ra x_0^+} u^{\prime}_0 = a_2$ with $a_1 \neq a_2$. By taking a sufficiently small $\ell_1 \leq \ell$, we may assume $u^{\prime}_0 = a_1 + o(1)$ on $[x_0-\ell_1, x_0]$ and $u^{\prime}_0 = a_2  + o(1)$ on $[x_0, x_0 + \ell_1]$. Let $y = L_1(x)$ be a straight line joining $(x_0 - \ell_1, u_0(x_0 - \ell_1))$ and $(x_0 + \ell_1, u_0(x_0 + \ell_1))$, whose slope is then given by $(a_1 + a_2)/2 + o(1)$. We obtain $\unew$ by trimming the corner of $u_0$ as follows:
\begin{equation*}
\unew =
\begin{cases}
u_0(x), \,\, \text{ if } x \leq x_0 - \ell_1, \\
L_1(x), \,\, \text { if } x_0 - \ell_1 \leq x \leq x_0 + \ell_1, \\
u_0(x), \,\, \text{ if } x \geq x_0 + \ell_1. 
\end{cases}
\end{equation*}
As this is a small perturbation from $u_0$, $\unew \in \A$. We will show that $J(\unew) < J(u_0)$. The change in the gradient term decreases, because 
\begin{eqnarray*}
\frac{d}{2} \int_{x_0 - \ell_1}^{x_0 + \ell_1} ( u_{new}^{\prime2} - u_0^{\prime2} ) \, dx &=& \frac{d}{2} \left \{ \int_{x_0 - \ell_1}^{0} (u_{new}^{\prime2} - u_0^{\prime2}) \, dx +  \int_{0}^{x_0 + \ell_1} (u_{new}^{\prime2} - u_0^{\prime2}) \, dx \right \} \\
&=& \frac{d\ell_1}{2}  \left \{ 2\left ( \frac{(a_1 + a_2)}{2} + o(1) \right )^2 - (a_1 + o(1))^2 - (a_2 + o(1))^2 \right \} \\
&=& \frac{d\ell_1}{2} \left \{ \frac{(a_1 + a_2)^2}{2}  - a_1^2 - a_2^2 + o(1) \right \} \\
&=& -\frac{d\ell_1}{4}\left\{ (a_1 - a_2)^2 + o(1) \right\} \\
&<& 0.
\end{eqnarray*}
By the mean value theorem, 
\[  \int_{x_0 - \ell_1}^{x_0 + \ell_1} \{ F(\unew) - F(u_0) \} \, dx = - \int_{x_0 - \ell_1}^{x_0 + \ell_1} f(\tilde{u})(\unew - u_0) \, dx  \]
for some $\tilde{u}$ between $u_0$ and $\unew$. With $\max_{-M \leq \xi \leq 1} | f(\xi) |$ being bounded and $|\unew - u_0| = O(\ell_1)$,
\begin{equation*}
\left \lvert \int_{x_0 - \ell_1}^{x_0 + \ell_1} \{ F(\unew) - F(u_0) \} \, dx \right \rvert \leq \ell_1O(\ell_1).
\end{equation*}
The change in the nonlocal term can be calculated by applying Lemma~\ref{nonlocal_difference}. Since both $\|\N u_0\|_{L^{\infty}}$ and $\|\N\unew\|_{L^{\infty}}$ are bounded,
\begin{align*}
& \left \lvert \integral \left \{ \frac{1}{2}\left( \unew \N \unew - u_0\N u_0 \right) + \frac{1}{4} \left ( (\N\unew)^4 - (\N u_0)^4 \right )\right\} \, dx \right \rvert  \\
\leq &\, \left \lvert \frac{1}{2}\int_{x_0 - \ell_1}^{x_0 + \ell_1} (\unew - u_0)\left(\N\unew + \N u_0\right) \, dx \right \rvert + { \frac{(M +1)^2}{4} \max \{1, \frac{1}{\gamma^2} \}}\, \int_{x_0 - \ell_1}^{x_0 + \ell_1}(\unew - u_0)^2 \, dx \\
\leq & \, \ell_1O(\ell_1).
\end{align*}
Observe that the changes in the potential term and in the nonlocal term are both negligible compared to that in the gradient term. Then $J(\unew) < J(u_0)$ contradicts the fact that $u_0$ is a minimizer in $\A$. The same argument can be used to treat the other cases. 
\end{proof}

\begin{remark} \label{oscillation}
In what follows, Lemma~\ref{corner_lemma} is referred to as a corner lemma, which does not require $u_0$ satisfy (\ref{FN_nonlinear_steady}b) on either $[x_0-\ell, x_0]$ or $[x_0, x_0 + \ell]$. 
\end{remark}

Let us consider the case $u_0(x_0) = 1$ and $u_0 \in C^1[x_0, x_0 + \ell]$ for some $\ell > 0$.
By taking $\ell$ sufficiently small, there are three possibilities for the behavior of $u_0$ on the left side of a neighborhood of $x_0$: 
\begin{enumerate}[] \itemsep=0.1mm
\item (P1) $u_0 < 1$ on $[x_0 - \ell, x_0)$;
\item (P2) $u_0 = 1$ on $[x_0 - \ell, x_0]$;
\item (P3) There exist $a_1 < b_1 \leq a_2 < b_2 \leq a_3 < b_3 \ldots$ in the interval $[x_0-\ell, x_0]$ such that 
\begin{equation*}
\begin{cases}
u_0 \text{ satisfies (\ref{FN_nonlinear_steady}b) on intervals } (a_n, b_n), \quad n = 1, 2, \ldots, \\
u_0 = 1 \quad \text{ on } [x_0 - \ell, x_0] \setminus \cup_{n=1}^{\infty} (a_n, b_n),
\end{cases}
\end{equation*}
with both $a_n \ra {x_0}^{-}$ and $b_n \ra {x_0}^{-}$. 
\end{enumerate}
The case of $u_0(x_0) = 0$ can be studied similarly with corresponding cases referred to as (Q1), (Q2), and (Q3), respectively. We denote the cases for $u(x_0) = \beta$ by (R1), (R2), and (R3). Except in the case (P3), (Q3), or (R3), $u_0 \in C^{\infty}[x_0 - \ell, {x_0 + \ell]}$ follows from Lemma~\ref{regularity} and the corner lemma. Moreover for (P3), (Q3), or (R3), the next lemma states that $\lim_{x\ra x_0} u_0^{\prime}(x)$ exists. As a consequence, we conclude that $u_0 \in C^1[0, \infty)$.

\begin{lem} \label{limit_point} 
Assume that $d \leq d_0$. If $x_0$ is a limit point stated in (P3), (Q3), or (R3), then {$u'_0(x_0) =0$ and $v_0(x_0)= v'_0(x_0) = 0$.} 
\end{lem}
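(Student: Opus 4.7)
The plan is to handle (P3) in detail; cases (Q3) and (R3) follow identically after replacing $c=1$ by $c\in\{0,\beta\}$, noting that $f(c)=0$ in all three cases. On each excursion interval $(a_n,b_n)$, Lemma~\ref{regularity} extends $u_0$ to a $C^\infty[a_n,b_n]$ classical solution of (\ref{FN_nonlinear_steady}a). Since $u_0\equiv c$ on the adjacent flat pieces, Lemma~\ref{corner_lemma} applied at $a_n$ and $b_n$ forces $u_0(a_n)=u_0(b_n)=c$ and $u_0'(a_n)=u_0'(b_n)=0$. By Lemmas~\ref{v_upperbnd} and~\ref{M1_const}, $|v_0|$ and $|f(u_0)|$ are uniformly bounded, so (\ref{FN_nonlinear_steady}a) yields a constant $K=K(\gamma,d)$ with $|u_0''|\leq K$ on every such interval; integrating twice from $a_n$ gives
\begin{equation*}
|u_0'(x)|\leq K(b_n-a_n),\qquad |u_0(x)-c|\leq \tfrac{K}{2}(b_n-a_n)^2 \quad \text{on }(a_n,b_n).
\end{equation*}
Because $u_0'\equiv 0$ on the flat complement and $b_n-a_n\to 0$, it follows that $\lim_{x\to x_0^-}u_0'(x)=0$. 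The hypothesis $u_0\in C^1[x_0,x_0+\ell]$ provides a right-sided value $u_0'(x_0^+)$; if the two one-sided derivatives differed, the corner-trimming construction from the proof of Lemma~\ref{corner_lemma} would strictly lower $J$, contradicting minimality. Hence $u_0'(x_0^+)=0$, and so $u_0'(x_0)=0$.

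For $v_0(x_0)=0$, integrate (\ref{FN_nonlinear_steady}a) over $(a_n,b_n)$. The boundary term $d[u_0'(b_n)-u_0'(a_n)]$ vanishes and $f(c)=0$, so
\begin{equation*}
\int_{a_n}^{b_n} v_0\,dx = \int_{a_n}^{b_n}[f(u_0)-f(c)]\,dx.
\end{equation*}
Lipschitz continuity of $f$ on the range of $u_0$, combined with $|u_0-c|\leq \tfrac{K}{2}(b_n-a_n)^2$, makes the right side $O((b_n-a_n)^3)$, while $v_0\in C^3$ expands the left side as $v_0(x_0)(b_n-a_n)+O((b_n-a_n)^2)$. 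Dividing by $b_n-a_n$ and sending $n\to\infty$ forces $v_0(x_0)=0$.

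For $v_0'(x_0)=0$, multiply (\ref{FN_nonlinear_steady}a) by $u_0'$ and integrate over $(a_n,b_n)$. Using $F'(u)=-f(u)$ together with the vanishing of $u_0'$ and $u_0-c$ at the endpoints eliminates all boundary terms from $\tfrac{d}{2}(u_0')^2$ and $F(u_0)$, leaving $\int_{a_n}^{b_n} v_0 u_0'\,dx=0$. One integration by parts and $u_0(a_n)=u_0(b_n)=c$ produce
\begin{equation*}
\int_{a_n}^{b_n} v_0'(u_0-c)\,dx = 0.
\end{equation*}
On each single subinterval $u_0-c$ has a strict sign (it is continuous and nowhere zero inside), so the mean value theorem yields $\xi_n\in(a_n,b_n)$ with $v_0'(\xi_n)=0$. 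Since $\xi_n\to x_0$ and $v_0'$ is continuous, $v_0'(x_0)=0$.

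The main obstacle is the sign analysis in cases (Q3) and (R3), where different subintervals $(a_n,b_n)$ near $x_0$ can oscillate on opposite sides of $c$; since the mean value argument is applied one subinterval at a time this is only a bookkeeping issue. A secondary subtlety is matching the two one-sided derivatives at $x_0$ itself, which is closed off by the same corner-trimming construction used in Lemma~\ref{corner_lemma}.
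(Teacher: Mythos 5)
Your proof is correct and, for the parts concerning $v_0(x_0)$ and $v_0'(x_0)$, follows a genuinely different route than the paper. For $u_0'(x_0)=0$ both arguments are essentially the same: bound $u_0''$ on each excursion interval $(a_n,b_n)$, integrate from a point where $u_0'$ vanishes, and use $b_n-a_n\to 0$. For $v_0(x_0)=0$ and $v_0'(x_0)=0$, the paper works pointwise: it evaluates $-du_0''=f(u_0)-v_0$ at an interior minimum $s_n$ of $u_0$ (where $u_0''\geq 0$) and at the endpoint $b_n$ (where $u_0''\leq 0$), squeezing $f(u_0(x_0))-v_0(x_0)$ between $\leq 0$ and $\geq 0$, then derives $v_0'(x_0)=0$ by a sign argument near $x_0$ using the same endpoint inequality. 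You instead integrate the $u$-equation (and its product with $u_0'$) over each $(a_n,b_n)$, exploiting the vanishing of the boundary terms to obtain $\int_{a_n}^{b_n}v_0\,dx=\int_{a_n}^{b_n}[f(u_0)-f(c)]\,dx$ and $\int_{a_n}^{b_n}v_0'(u_0-c)\,dx=0$, then pass to the limit. Your approach is more symmetric and avoids the case-by-case second-derivative analysis, at the cost of needing the one-signedness of $u_0-c$ on each excursion interval (which both proofs ultimately need, for essentially the same reason: the admissible set $\A$ allows $u_0$ to cross $0$ or $\beta$ only once, so $u_0-c$ cannot change sign between two consecutive zeros). One cosmetic inaccuracy: your stated expansion $\int_{a_n}^{b_n}v_0\,dx=v_0(x_0)(b_n-a_n)+O((b_n-a_n)^2)$ is not literally correct, since $|b_n-x_0|$ need not be $O(b_n-a_n)$; what you actually need (and have) is $\frac{1}{b_n-a_n}\int_{a_n}^{b_n}v_0\,dx\to v_0(x_0)$ by continuity of $v_0$ and $a_n,b_n\to x_0$, which suffices.
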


\begin{proof}
On the interval $[a_n, b_n] {\subseteq [x_0 - \ell, x_0)} $, where $u_0$ satisfies (\ref{FN_nonlinear_steady}b), there is a $s_n \in (a_n, b_n)$ such that $u'_0(s_n) = 0$. Since $\|-f(u_0) + v_0\|_{L^{\infty}(a_n, b_n)} \leq C_1$ for some constant $C_1$ not depending on $x_0$ or $n$, integrating (\ref{FN_nonlinear_steady}b) yields $d \lvert u'_0(x) \rvert \leq C_1 \left | \int_{s_n}^{x} \, dt \right |$, which implies $\lvert u'_0(x) \rvert \leq C_1(b_n - a_n)/d.$
For $|b_n - a_n| \ra 0$ as $n \ra \infty$, it follows that $\|u'_0\|_{L^{\infty}(a_n, b_n)} \ra 0.$ Then $u_0 \in C^1[x_0-\ell, x_0]$ if we set $u'_0(x^{-}_0) = 0$. 
{Suppose Cases (P1), (P2), (Q1), (Q2), (R1) or (R2) occurs on the interval $[x_0,x_0+\ell]$, we see that
$u_0 \in C^1[x_0, x_0 + \ell]$ so that  $u'_0(x_0) =0$ is immediate from the corner lemma.} 
{On the other hand if $u_0$ satisfies an analogous situation (P3), (Q3) or (R3) on the interval $[x_0,x_0+\ell]$, 
the same argument  as for $[x_0-\ell,x_0]$ shows $u_0 \in C^1[x_0, x_0 + \ell]$ with $u_0^{\prime}(x_0^+) = 0$.
Hence in all scenario irrespective of what cases we have on the right of $x_0$, 
we have $u_0^{\prime}(x_0) =0$.} 

Next let us prove that {$v_0(x_0) = 0$ and $v'_0(x_0) = 0$.} Consider (P3) first. Since $u_0 \leq 1$ everywhere, by (\ref{FN_nonlinear_steady}b)
\begin{equation} \label{lim_sn}
f(u_0(s_n)) - v_0(s_n) = -du_0^{\prime\prime}(s_n) \leq 0.
\end{equation}
As $s_n \ra x^-_0$, we see that $f(u_0(x_0)) - v_0(x_0) \leq 0.$ On the other hand, since $u_0 \in C^2[a_n, b_n]$ by Lemma~\ref{regularity} and  $u_0(b_n) = 1$ with $u'_0(b_n) = 0$, 
\begin{equation} \label{lim_bn}
f(u_0(b_n)) - v_0(b_n) = -du_0^{\prime\prime}(b_n) \geq 0. 
\end{equation}
In this case, taking $b_n \ra x^-_0$ gives $f(u_0(x_0)) - v_0(x_0) \geq 0$. Therefore $f(u_0(x_0)) - v_0(x_0) = 0$ from \eqref{lim_sn} and \eqref{lim_bn} which implies that $v_0(x_0) = f(1) = 0$. Suppose now that $v'_0(x_0) < 0$. This together with $u'_0(x_0)=0$ gives $(f(u_0) - v_0)' \bigr\rvert_{x=x_0} = -v'_0(x_0) > 0$. Since $f(u_0(x_0)) - v_0(x_0) = 0$, it follows that $f(u_0(x)) - v_0(x) < 0$ on $[x_0-\delta, x_0)$ for some $\delta > 0$. This is incompatible with \eqref{lim_bn}. Similarly $v'_0(x_0) > 0$ would contradict (\ref{lim_sn}). Therefore $v'_0(x_0) = 0$.

If $u_0(x_0) = 0$ or $u_0(x_0) = \beta$, the proof of ${v_0(x_0) =} v_0'(x_0)=0$ is slightly different since $u_0$ can cross $0$ or $\beta$ in $(a_1, x_0)$; nevertheless due to the fact that $u_0$ can cross $0$ or $\beta$ only once, by choosing $a_1$ sufficiently close to $x_0$, $u_0$ does not change sign on $[a_1, x_0]$ in the case of (Q3), and either $u_0 \geq \beta$ or $u_0 \leq \beta$ on $[a_1, x_0]$ in the case of (R3). Then the rest of the proof is similar as above. We omit the details. 
\end{proof}

\section{Positivity of $v_0$} \label{sec_positive}
\setcounter{equation}{0}
\renewcommand{\theequation}{\thesection.\arabic{equation}}

Another essential qualitative property of the minimizer $u_0$ is the positivity of $v_0$. When the sign of $v_0$ is known, the energy change in the nonlocal term associated with the modification of $u_0$ becomes easier to quantify. As a result, Lemma~\ref{nonlocal_difference} turns out to be more useful when we apply the truncation technique. We begin with two lemmas which show that $v_0$ is partially positive. Then, we follow the idea in \cite{Chen:2015} to study the linearization of \eqref{FN_nonlinear_steady} which provides information crucial for showing $v_0 > 0$ everywhere. 

\begin{lem} \label{v0_pos}
If $u_0 \geq 0$ on $[0, \infty)$ is non-trivial, then $v_0 > 0$ everywhere.
\end{lem}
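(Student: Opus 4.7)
The plan is to derive the strict positivity of $v_0$ essentially for free from the comparison estimate already established in Lemma~\ref{comparison}. By Lemma~\ref{min_existence} the minimizer $u_0$ lies in $\A$, and by hypothesis it is non-negative and non-trivial, so the hypotheses of Lemma~\ref{comparison} are satisfied. That lemma then yields the pointwise bound
\[
0 < \L_0 u_0(x) \leq \N u_0(x) = v_0(x), \qquad x \in [0,\infty),
\]
the strict positivity on the left coming from the fact that the Green's function $G_0(x,s)$ is strictly positive on $[0,\infty) \times [0,\infty)$ (as is evident from its explicit formula) together with $u_0 \geq 0$ being non-trivial, so that
\[
\L_0 u_0(x) = \int_0^\infty G_0(x,s)\, u_0(s)\, ds > 0
\]
for every $x \geq 0$. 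This delivers $v_0 > 0$ on $[0,\infty)$.

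If a self-contained argument is preferred, one can instead invoke the maximum principle directly on the inhibitor equation rewritten as
\[
v_0'' - (\gamma + v_0^2)\, v_0 = -u_0 \leq 0,
\]
with the natural boundary condition $v_0'(0) = 0$ and the decay $v_0(x) \to 0$ as $x \to \infty$ (both recorded in Section~\ref{sec_prelim}). The zeroth-order coefficient $\gamma + v_0^2$ is non-negative and bounded in view of Lemma~\ref{v_upperbnd}, so this is a uniformly elliptic problem with non-positive right-hand side. The case $v_0 \equiv 0$ is immediately excluded since it would force $u_0 \equiv 0$ from the equation itself, contradicting non-triviality. Any other non-positive infimum of $v_0$ would have to be attained either at an interior point, which is ruled out by the strong maximum principle, or at $x=0$, which is ruled out by the Hopf boundary lemma combined with $v_0'(0) = 0$. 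Hence $v_0 > 0$ throughout $[0,\infty)$.

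I do not anticipate any real obstacle here, since the statement is essentially a corollary of machinery already developed. The only mild point to verify is that the minimizer $u_0$ of the present section genuinely satisfies the hypotheses of Lemma~\ref{comparison} (membership in $\A$ and non-trivial non-negativity), which is immediate from the assumptions of the lemma together with Lemma~\ref{min_existence}.
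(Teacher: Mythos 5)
Your primary argument is exactly the paper's own proof: invoke Lemma~\ref{comparison} to get $v_0 = \N u_0 \geq \L_0 u_0 > 0$ (the paper's one-line proof writes $\L u_0$, which appears to be a typo for $\L_0 u_0$; your version has the inequality chain the right way round). The alternative maximum-principle argument you sketch is also sound and is essentially the same reasoning already embedded in the proof of Lemma~\ref{comparison} itself, so it adds a self-contained route but not a genuinely different one.
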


\begin{proof}
If $u_0 \geq0$, then $v_0 \geq \L u_0 > 0$ follows from Lemma~\ref{comparison}.
\end{proof}

\begin{lem} \label{v0(0)_pos}
No matter whether $u_0$ changes sign or not, $v_0(0) > 0$.
\end{lem}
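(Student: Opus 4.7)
The plan is to argue by contradiction: assume $v_0(0) \leq 0$. Since $u_0 \in \A$ forces $u_0(0) \geq \beta > 0$, evaluating (\ref{FN_nonlinear_steady}b) at $x = 0$ together with the natural boundary condition $v_0'(0) = 0$ gives
\[
v_0''(0) = \gamma v_0(0) + v_0(0)^3 - u_0(0) < 0.
\]
A Taylor expansion then yields $v_0(x) < v_0(0) \leq 0$ for small $x > 0$. In particular, the minimum of $v_0$ on $[0, x_2]$ is strictly negative and is \emph{not} attained at $x = 0$.

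On $[0, x_2]$, the definition of $\A$ ensures $u_0 \geq 0$, so (\ref{FN_nonlinear_steady}b) rewrites as
\[
L v_0 := v_0'' - c(x) v_0 = -u_0 \leq 0, \qquad c(x) := \gamma + v_0^2 \geq \gamma > 0,
\]
exhibiting $v_0$ as a supersolution of a linear operator with strictly positive zero-order coefficient. I would apply the strong maximum principle to exclude a strict negative interior minimum of $v_0$ on $(0, x_2)$: at such a point $x^*$ one would have $v_0'(x^*) = 0$ and $v_0''(x^*) \geq 0$, while the equation yields $v_0''(x^*) = c(x^*) v_0(x^*) - u_0(x^*) < 0$, a contradiction. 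Combining with the previous paragraph forces the minimum to lie at $x = x_2$ with $v_0(x_2) < v_0(0) \leq 0$; the Hopf boundary-point lemma at $x_2$ (outward normal in the $+x$ direction) then provides $v_0'(x_2) < 0$, and since $u_0(x_2) = 0$ the equation gives $v_0''(x_2) = c(x_2) v_0(x_2) < 0$, so $v_0$ is strictly decreasing with negative concavity at $x_2$.

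The remaining task, and the main obstacle, is to exclude the residual configuration in which $v_0(x_2) < 0$ and $v_0$ continues a negative excursion into $[x_2, \infty)$. Pointwise maximum-principle arguments no longer suffice: on $[x_2, \infty)$ we only have $u_0 \leq 0$, which allows the ODE to support a negative interior minimum of $v_0$. I would therefore invoke the minimality of $u_0$ and construct a competitor $\unew \in \A$ by a judicious modification of $u_0$, for example by truncating $u_0$ to zero on $[x_2, \infty)$, which clearly preserves membership in $\A$. For such an $\unew$, Lemma~\ref{v0_pos} implies $\N \unew > 0$, the gradient term strictly decreases, and the potential term strictly decreases because $F(u_0) > 0$ whenever $u_0 < 0$. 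The delicate step is then to use the sign information $v_0 \leq 0$ on $[0, x_2]$ established above, together with Lemma~\ref{nonlocal_difference}, to bound the nonlocal change and conclude that $J(\unew) < J(u_0)$, contradicting the minimality of $u_0$ and completing the proof.
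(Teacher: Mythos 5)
Your argument on $[0,x_2]$ is sound and matches the paper in spirit: $v_0''(0) < 0$ forces $v_0$ strictly negative for small $x > 0$, the maximum principle rules out a negative interior minimum, and Hopf at $x_2$ gives $v_0(x_2) < 0$ with $v_0'(x_2) < 0$. However, you misidentify the remaining task and then leave a genuine gap. You claim "pointwise maximum-principle arguments no longer suffice" on $[x_2,\infty)$ because the sign of $u_0$ flips — but that flip is exactly what makes a (different) maximum-principle argument work. On $[x_2,\infty)$ one has $v_0'' - (\gamma + v_0^2)v_0 = -u_0 \geq 0$, so $v_0$ cannot attain a non-negative \emph{interior maximum}; since $v_0(x_2) < 0$ and $v_0 \to 0$ at infinity, $v_0 < 0$ on all of $[x_2,\infty)$. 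Then the paper closes by looking at the \emph{other} equation: at the global minimum $x_0$ of $u_0$ (which lies in $(x_2,\infty)$, where $u_0 < 0$), one has $u_0''(x_0) \geq 0$ and $f(u_0(x_0)) > 0$, so $v_0(x_0) = d\,u_0''(x_0) + f(u_0(x_0)) > 0$, directly contradicting $v_0 < 0$ there.

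Your substitute plan — truncating $u_0$ to zero on $[x_2,\infty)$ and invoking minimality — is not carried through, and as stated it has structural problems: $\unew - u_0$ does not have compact support, so Lemma~\ref{nonloc_decrease} is not applicable; Lemma~\ref{v0_pos} is stated for $u_0$ rather than arbitrary non-negative competitors (though the comparison lemma does give positivity); and under your standing hypothesis $\N u_0 \leq 0$ while $\N\unew > 0$, so the sign of $\N\unew + \N u_0$ in Lemma~\ref{nonlocal_difference} is not controlled, which is precisely the "delicate step" you acknowledge but do not resolve. In short, the proof is incomplete, and the missing ingredient in the paper is the simple pointwise use of equation (\ref{FN_nonlinear_steady}a) at the global minimum of $u_0$, not a variational competitor.
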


\begin{proof}
If $u_0$ stays non-negative, the assertion follows immediately from Lemma~\ref{v0_pos}. Therefore assume $u_0$ changes sign at $x = x_2$. Suppose $v_0(0) \leq 0$. We claim that $v_0(x) < 0$ for all $x \in (0, \infty)$. Let us prove our claim on $(0, x_2]$ first. Its proof is divided into two cases:

\textit{Case 1:} Assume $v_0(0) < 0$. Since ${v_0}^{\prime\prime} - (\gamma + v_0^2)v_0 = -u_0 \leq 0$ on $[0, x_2]$, $v_0$ cannot have an interior negative minimum by the maximum principle. Moreover, with the boundary condition {$v_0^{\prime}(0) = 0$}, the Hopf lemma implies that the minimum occurs at $x = x_2$ and $v'_0 < 0$ on $(0, x_2]$. Hence $v_0(x) < v_0(0) < 0$ on $(0, x_2]$. 

\textit{Case 2:} Assume $v_0(0) = 0$. Then $v_0^{\prime\prime}(0) = -u_0(0) < 0$, and the boundary condition $v'_0(0) = 0$ implies that $v'_0(x) < 0$ in a neighborhood of $0$. This leads to the same conclusion as in Case 1.\\

On the interval $(x_2, \infty)$, since ${v_0}^{\prime\prime} - (\gamma + v_0^2)v_0 = -u_0 \,\geq 0$, $v_0$ cannot attain a non-negative interior maximum. From the fact ${v_0}(x_2) < 0$ and $v_0 \ra 0$ as $x \ra \infty$, it follows that $v_0 < 0$ on $[x_2, \infty)$. This finishes the proof of our claim. Now let $x_0$ be a point where $u_0$ attains its global minimum. Then $u^{\prime\prime}_0(x_0) \geq 0$. Since $u_0$ is negative in a neighborhood of $x_0$, we have {$f(u_0(x_0)) > 0$}, which implies $v_0(x_0) = du_0''(x_0) + f(u_0(x_0)) > 0.$ This contradicts our claim that $v_0 < 0$ on $(0, \infty)$.
\end{proof}

Let $d_1 \equiv \min \{d_0, \frac{\beta^2}{4(1+\beta\gamma)}\}$. It what follows, it is assumed that $d \leq d_1$. Observe that the system \eqref{FN_nonlinear_steady} can be expressed as
\begin{equation} \label{matrixForm}
\begingroup
\setlength\arraycolsep{0.5pt}
\begin{pmatrix} u_0 \\ v_0 \end{pmatrix}^{\prime\prime} - \, A \begin{pmatrix} u_0 \\ v_0 \end{pmatrix} \, = \, \begin{pmatrix} -\frac{u_0^2}{d}(1+\beta-u_0) \\ v_0^3 \end{pmatrix},
\endgroup
\end{equation} 
where 
\[A= 
\begin{pmatrix} \frac{\beta}{d} & \frac{1}{d}  \\ -1& \gamma \end{pmatrix}. \]
We now document the eigenvalues, and corresponding left and right eigenvectors of $A$. Details can be found in \cite{Chen:2015}.

\begin{enumerate}[(a)]
\item
Eigenvalues $\lambda_1, \lambda_2$ of $A$ are real and positive. Moreover they satisfy
\begin{equation} \label{eigenvalue_range}
 0 < \lambda_1 < \frac{\beta}{2d} < \frac{1}{2} \left (\gamma + \frac{\beta}{d} \right ) <\lambda_2 < \frac{\beta}{d}.
\end{equation}

\item For the eigenvalue $\lambda_1$, it has a right eigenvector {$\mathbf{a}=(-1,d\alpha_2)^{T}$} and a left eigenvector ${\mathbf l}_1=(1,\alpha_2)^{T}$,
where $\alpha_2 := \beta/d -\lambda_1 >0$.
For the eigenvalue $\lambda_2$, it has a right eigenvector {$\mathbf{b}=(-\alpha_2,1)^{T}$} and a left eigenvector ${\mathbf l}_2=(1,\alpha_1)^{T}$,
where $\alpha_1 := 1/d\alpha_2 >0$. 

It can be checked that
\[
0 < \alpha_1<\lambda_1< \frac{\beta}{2d} < \alpha_2 < \lambda_2< \frac{\beta}{d} \;,
\]
and 
\begin{equation} \label{evector_dot}
\mathbf{l}_1 \cdot \mathbf{a} >0\;, \qquad \mathbf{l}_2 \cdot \mathbf{b} <0 \;.
\end{equation}

\item The asymptotic behavior of $(u_0, v_0)$ at large $x$ can be studied by linearizing \eqref{matrixForm} about $(u,v) = (0,0)$:
\begin{equation} \label{linearized}
\begingroup
\setlength\arraycolsep{0.5pt}
\begin{pmatrix} \tilde{u} \\ \tilde{v} \end{pmatrix}^{\prime\prime}  - \,  A \begin{pmatrix} \tilde{u} \\ \tilde{v} \end{pmatrix} = \bold{0}.
\endgroup
\end{equation}
For \eqref{linearized} all the solutions decaying to $(0, 0)$ as $x \ra \infty$ are of the form 
\begin{equation} \label{linearization}
\begingroup
\setlength\arraycolsep{0.5pt}
\begin{pmatrix} u_0 \\ v_0 \end{pmatrix} \sim
\begin{pmatrix} \tilde{u} \\ \tilde{v} \end{pmatrix}  \, =\, C_1e^{-\sqrt{\lambda_1}x}\mathbf{a} + C_2e^{-\sqrt{\lambda_2}x}\mathbf{b}.
\endgroup
\end{equation} 
\end{enumerate}
\vspace{5mm}
While the linearization of \eqref{matrixForm} is the same whether the additional nonlinearity $v_0^3$ on its right hand side is present or not,
this nonlinearity  has to be taken into account when studying the solution on the entire interval $[0,\infty)$.

\begin{lem} \label{psi2}
Let $\psi_1 = u_0 + \alpha_2v_0$ and $\psi_2=u_0 + \alpha_1v_0$. Then for $i=1, 2$, $\psi_i \geq 0$ everywhere.
\end{lem}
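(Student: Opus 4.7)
The idea is to decouple the linear part of \eqref{matrixForm} by applying the left eigenvectors $\mathbf{l}_i^T$, using the identity $\mathbf{l}_i^T A = \lambda_i \mathbf{l}_i^T$. Writing $\mu_1 = \alpha_2$ and $\mu_2 = \alpha_1$, this produces
\[
\psi_i'' - \lambda_i \psi_i \;=\; -\frac{u_0^2}{d}(1+\beta - u_0) + \mu_i v_0^3 \;=:\; g_i(x).
\]
The natural boundary conditions $u_0'(0) = v_0'(0) = 0$ (Remark~\ref{u0_bndry} and Lemma~\ref{natural_bndry}) give $\psi_i'(0) = 0$, and $\psi_i(x) \to 0$ as $x \to \infty$. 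Furthermore $\psi_i(0) \geq \beta + \mu_i v_0(0) > 0$ by $u_0(0) \geq \beta$ and Lemma~\ref{v0(0)_pos}.

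Suppose for contradiction that $\psi_i < 0$ at some point. Then $\psi_i$ attains a strict negative global minimum at some $x_\ast \in (0, \infty)$. On a region where $u_0$ is smooth the second-derivative test gives $\psi_i''(x_\ast) \geq 0$, so $g_i(x_\ast) \geq -\lambda_i \psi_i(x_\ast) > 0$. I would then split on the sign of $v_0(x_\ast)$. If $v_0(x_\ast) \leq 0$, both summands of $g_i(x_\ast)$ are nonpositive (using $u_0 \leq 1 < 1+\beta$ from the definition of $\A$), contradicting $g_i(x_\ast) > 0$. If instead $v_0(x_\ast) > 0$, then $\psi_i(x_\ast) < 0$ forces $u_0(x_\ast) < -\mu_i v_0(x_\ast) < 0$, so $u_0^2 > \mu_i^2 v_0^2$ and $1+\beta - u_0 > 1+\beta$; substituting into $g_i$ gives
\[
0 < g_i(x_\ast) < \mu_i v_0^2(x_\ast) \Bigl( v_0(x_\ast) - \frac{\mu_i(1+\beta)}{d}\Bigr).
\]
Since $v_0 \leq 1$ by Lemma~\ref{v_upperbnd}, a contradiction follows provided $\mu_i(1+\beta) > d$. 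For $i=1$, $\mu_1 = \alpha_2 > \beta/(2d)$ by \eqref{eigenvalue_range}, so $d \leq d_1$ makes $\mu_1(1+\beta)/d$ far larger than $1$. For $i=2$, $\mu_2 = \alpha_1 = 1/(d\alpha_2) > 1/\beta$ (using $\alpha_2 < \beta/d$), hence $\mu_2(1+\beta) > (1+\beta)/\beta > 1 > d$.

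\textbf{Expected obstacle.} The step requiring real care is the possibility that $x_\ast$ lies in an interval on which $u_0 \equiv c \in \{0, \beta, 1\}$, since $u_0$ need not satisfy the full PDE there and $\psi_i$ may lose $C^2$-regularity. On any such interval the $v_0$-equation still gives $v_0'' = \gamma v_0 + v_0^3 - c$, and $\psi_i = c + \mu_i v_0$. An interior minimum of $\psi_i$ there would force $v_0''(x_\ast) \geq 0$, while $\psi_i(x_\ast) < 0$ forces $v_0(x_\ast) < -c/\mu_i \leq 0$; the ODE then yields $v_0''(x_\ast) = \gamma v_0(x_\ast) + v_0^3(x_\ast) - c < 0$, a contradiction. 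Endpoints where a regular interval meets a constant interval are handled using the $C^1$-regularity of $u_0$ (Lemma~\ref{corner_lemma} and Lemma~\ref{limit_point}) together with one-sided second-derivative tests. Once this constant-interval case is dispatched, the maximum-principle contradiction above completes the proof for both $\psi_1$ and $\psi_2$.
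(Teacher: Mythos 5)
Your proof is correct, and it follows a genuinely different route from the paper's. Both arguments start by applying the left eigenvectors to obtain the scalar equation $\psi_i'' - \lambda_i \psi_i = g_i := -\tfrac{u_0^2}{d}(1+\beta-u_0) + \mu_i v_0^3$ on intervals where the $u$-equation holds. The paper then performs a targeted algebraic manipulation: subtracting $\tfrac{1}{\alpha_1^2}\psi_2^3$ from both sides cancels the $v_0^3$ contribution and yields $\psi_2'' - h(x)\psi_2 \le 0$ with $h>0$, a form directly amenable to the maximum principle; the conclusion is then obtained by a propagation argument showing $\psi_2'>t_1>0$ on $(0,b_1]$ via a three-way case analysis (ODE interval / $u_0\equiv 1$ / $u_0\equiv 0$ or $\beta$), culminating in a contradiction with the natural boundary condition at $x=0$. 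You instead leave the equation as is and do a pointwise sign analysis at the global minimum $x_*$: the second-derivative test forces $g_i(x_*)>0$, which is impossible when $v_0(x_*)\le 0$ (both summands of $g_i$ are nonpositive), and impossible when $v_0(x_*)>0$ because $\psi_i(x_*)<0$ forces $u_0(x_*)<-\mu_i v_0(x_*)<0$, so the quadratic term $u_0^2(1+\beta-u_0)/d$ dominates $\mu_i v_0^3$ using $v_0\le 1$ (Lemma~\ref{v_upperbnd}) and the eigenvalue bounds $\mu_i(1+\beta)>d$.

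Both proofs must separately deal with constant intervals and junctions, since the derived scalar equation fails where $u_0\equiv c\in\{0,\beta,1\}$. Your treatment of this — passing to the $v_0$-equation on the constant interval and using one-sided second derivatives at junctions — is sound, and you correctly note that accumulation points of the (P3)/(Q3)/(R3) type cannot be the minimizer because Lemma~\ref{limit_point} gives $v_0(x_0)=0$, hence $\psi_i(x_0)=u_0(x_0)\ge 0$. A small presentational point: you cite $\psi_i'(0)=0$ but never use it; your argument rests only on $\psi_i(0)>0$, which follows from $u_0(0)\ge\beta$ (since $x_1\ge m_2>0$ by Lemma~\ref{min_sequence}) and $v_0(0)>0$ (Lemma~\ref{v0(0)_pos}). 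That is in fact a modest advantage of your route: the paper's alternative ending has to reargue the natural boundary condition for the constrained minimizer, whereas you sidestep this entirely. On the other hand, the paper's substitution trick produces a differential inequality valid in a neighborhood rather than a pointwise sign condition, which makes the Hopf-lemma machinery apply cleanly and avoids the explicit quantitative comparison $v_0\le 1<\mu_i(1+\beta)/d$ that your version requires.
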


\begin{proof}
We give a proof for $i=2$. A similar argument works when $i=1$. \\
\textit{Step 1:}  Define $\psi_2 = u_0 + \alpha_1v_0 = \mathbf{l}_2 \cdot  \binom{u_0}{v_0}$. Then $\psi_2 \in C^1[0, \infty)$. Away from the intervals where $u_0$ is identically $0, \beta,$ or $1$, $u_0 \in C^{\infty}$ and $(u_0, v_0)$ satisfies (\ref{matrixForm}). Premultiplying (\ref{matrixForm}) by $\ltwo$ yields
\begin{equation*} 
{\psi_2}^{\prime\prime} - \lambda_2\psi_2 = -\frac{u_0^2(1+\beta-u_0)}{d} + \alpha_1 v_0^3.
\end{equation*}
Let us subtract $\frac{1}{\alpha^2_1}\psi_2^3$ from both sides to get
\begin{align*}
{\psi_2}^{\prime\prime} - \lambda_2\psi_2 - \frac{1}{\alpha_1^2}\psi^3_2 &= -\frac{u_0^2(1+\beta-u_0)}{d} + \alpha_1 v_0^3 - \frac{1}{\alpha_1^2}\psi^3_2 \\
&= -\frac{u_0^2(1+\beta-u_0)}{d} + {\alpha_1 v_0^3} - \frac{1}{\alpha_1^2}(u_0^3 + 3\alpha_1u_0v_0\psi_2  + {\alpha_1^3v_0^3}) \\
&= -\frac{u_0^2}{d}(1+\beta-u_0) - \frac{u_0^3}{\alpha_1^2} - \frac{3u_0v_0}{\alpha_1}\psi_2, 
\end{align*}
which is equivalent to 
\begin{equation}\label{psi_equality}
{\psi_2}^{\prime\prime} - (\lambda_2 + \frac{1}{\alpha_1^2}\psi_2^2 - \frac{3u_0v_0}{\alpha_1})\psi_2 = -\frac{u_0^2}{d}(1+\beta - {(1-\frac{d}{\alpha_1^2})}u_0).
\end{equation}
Since $d \leq \frac{\beta^2}{4(1+\beta\gamma)} < \frac{\beta^2}{4}$ and  $\frac{1}{\alpha_1} = \beta - d\lambda_1  < \beta$, we have $\frac{d}{\alpha_1^2} < \frac{\beta^4}{4} < \, 1$. Hence, {$0 < 1-\frac{d}{\alpha_1^2} < 1$}. Together with $u_0 \leq 1$, it is clear that the sign of the right hand side of \eqref{psi_equality} is non-positive. With  $h(x) = \lambda_2 + \frac{1}{\alpha_1^2}\psi_2^2 - \frac{3u_0v_0}{\alpha_1},$
\begin{equation}
{\psi_2}^{\prime\prime} - h(x)\psi_2 \leq 0 \label{psi_max}.
\end{equation}
We remark that
\begin{align*}
h(x) &= \lambda_2 + \frac{1}{\alpha_1^2}(\psi_2^2 -3\alpha_1u_0v_0) \\
&=\lambda_2 + \frac{1}{\alpha_1^2}(u_0^2 + \alpha_1^2v_0^2 - \alpha_1 u_0v_0) \\
& > \, 0. 
\end{align*}
\noindent \textit{Step 2:} Suppose for contradiction $\psi_2 < 0$ somewhere. Define $b \equiv \sup\{x:\psi_2(x) < 0\},$ where $b=\infty$ is allowed. Since $\psi_2 \ra 0$ as $x \ra \infty$, it follows that $\psi_2(b) = 0$ or $\psi_2 \ra 0$ if $b=\infty$. In either case, there exists a $b_1 \in (0, \infty)$ such that $\psi_2(b_1) := -t_0 < 0$ and $\psi_2'(b_1) := t_1 > 0$. We claim $\psi_2'(x) > t_1$ on $(0, b_1]$. Let $a_1$ be a point in $(0, b_1)$. First we verify that $\psi_2(x)< -t_0$ and $\psi_2'(x) > t_1$ on $[a_1, b_1]$  under three possibilities: \vspace{2mm}

\textit{Case (A):} Suppose that $u_0 \neq 0$, $u_0 \neq \beta$, and $u_0 \neq 1$ on $[a_1, b_1]$. Since $\psi_2$ satisfies (\ref{psi_max}) on $[a_1, b_1]$, it cannot attain a non-positive minimum on $(a_1, b_1)$ by the maximum principle. Moreover, with $\psi'_2(b_1) > 0$, it follows from the Hopf lemma that $\psi_2' > 0$ on $[a_1, b_1]$. {Therefore $\psi_2(x) < \psi_2(b_1) = -t_0$ on $[a_1, b_1)$.} Putting this information back into (\ref{psi_max}), $\psi_2^{\prime\prime} (x) \leq h(x)\psi_2(x) < -h(x) t_0$ for all $x \in [a_1, b_1)$; consequently, $\psi_2'(x) \geq t_0 \int_x^{b_1} h(\xi) \, d \xi + t_1 > t_1$. 

\textit{Case (B):} Suppose $u_0 = 1$ on $[a_1, b_1]$. Since $u_0(b_1)=1$ and $u'_0(b_1) = 0$ by the corner lemma, it follows that $v_0(b_1) = -(t_0 + 1)/\alpha_1$ and $v'_0(b_1) = t_1/\alpha_1$. In view of 
\begin{equation} \label{caseB}
v_0^{\prime\prime} - (\gamma + v_0^2)v_0 = -1 < 0 \quad \text{on } [a_1, b_1],
\end{equation}
the maximum principle together with the Hopf lemma implies that $v_0(x) < v_0(b_1) = -(t_0 + 1)/\alpha_1$ on $[a_1, b_1)$. Then $v_0^{\prime\prime} < -\{ 1 + (\gamma + v_0^2)(t_0 + 1)/\alpha_1\}$ and consequently $v'_0(x) > t_1/\alpha_1 +\int_x^{b_1}  \{1+ (\gamma + v_0^2(\xi))(t_0 + 1)/\alpha_1\}\, d\xi$ for all $x \in [a_1, b_1)$. Combining with $u'_0 = 0$ on $[a_1, b_1]$ yields $\psi_2'(x) > t_1 + \int_x^{b_1} \{\alpha_1 + (\gamma + v_0^2(\xi))(t_0 + 1)\} \, d\xi> t_1$ {and therefore $\psi_2(x)< -t_0$}.

\textit{Case (C):} If $u_0 = \beta$ or $u_0 = 0$ on $[a_1, b_1]$, replacing $1$ by $\beta$ and $0$, respectively, the calculation in Case (B) will do. \vspace{2mm}

To finish our claim that $\psi_2^{\prime}(x) > t_1$ for all $x \in (0, b_1]$, it suffices to show that $(0, b_1]$ is a finite combination of Cases (A)-(C). Suppose there is an accumulation point $x_0$ such that $x \downarrow x_0^+$ with one of Cases (A)-(C) occurs alternatively in adjacent subintervals of $(x_0, b_1)$ or possibly a combination of such distributions. From what we have shown $\psi_2^{\prime} \geq t_1$ on $(x_0, b_1)$, so $\psi'_2(x_0) \geq t_1 > 0$ follows from $\psi_2 \in C^1[0, \infty)$. However, $u'_0(x_0) = v'_0(x_0) = 0$ by Lemma~\ref{limit_point}, which implies that $\psi'_2(x_0) = 0$. This is a contradiction. The same is true when $x_0$ is a limit point from the left. Hence there is no accumulation point, and therefore $\psi^{\prime}_2 \geq t_1 > 0$ on $(0, b_1)$. On the other hand, with $v_0(0)>0$ from Lemma~\ref{v0(0)_pos} and $\psi_2(0)< -t_0$, we see that $u_0(0)<0$. This is a contradiction since it follows from $x_1 > 0$ proved in Lemma~\ref{min_sequence} that $u(0) > 0$. An alternative proof that does not require $u(0) > 0$ to be known is given in the following: the continuity of $\psi_2^{\prime}$ implies that $\psi_2^{\prime}(0) \geq t_1 > 0$. Since $u_0 (0) \not \in \{ 0, \beta, 1 \}$, $u_0$ is smooth and satisfies \eqref{FN_nonlinear_steady} in a neighborhood $[0,\delta_1)$ of $x=0$. By choosing a test function $\varphi$ with support on $[0,\delta_1)$, a duplication of the proof of Lemma~\ref{natural_bndry} shows that the natural boundary condition $u_0'(0)=0$ is satisfied. 
Coupled with $v_0'(0)=0$, it follows that $\psi_2^{\prime}(0) = 0$, which is absurd. This completes the proof of $\psi_2 \geq 0$.
\end{proof}

\begin{lem} \label{pos_all}
If $d \leq d_1$, then $v_0 > 0$ everywhere. Moreover, if $u_0 \geq 0$ on $[0, x_2]$ and $u_0 \leq 0$
on $[x_2, \infty)$ for some $x_2$, then $v_0^{\prime} < 0$ on $[x_2, \infty)$ and $v_0 \downarrow 0$ as $x \ra \infty$. Once $u_0$ turns negative, then $u_0 < 0$ for all $x \in (x_2, \infty)$. 
\end{lem}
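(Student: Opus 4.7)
My plan is to obtain nonnegativity of $v_0$ from the bounds $\psi_1,\psi_2\geq 0$ of Lemma~\ref{psi2} combined with a minimum principle on $[0,x_2]$, to upgrade to strict positivity via the strong maximum principle, Hopf's lemma, or ODE uniqueness depending on where a putative zero of $v_0$ sits, to read off $v_0'<0$ on the tail directly from the inhibitor equation, and finally to rule out $u_0(y)=0$ on $(x_2,\infty)$ by extracting a contradiction from a Taylor expansion of $u_0$ at $y$.

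For nonnegativity of $v_0$, I split at $x_2$, setting $x_2=\infty$ when $u_0\geq 0$ everywhere (in which case Lemma~\ref{v0_pos} already delivers $v_0>0$). On $[x_2,\infty)$ the bound $\psi_2=u_0+\alpha_1 v_0\geq 0$ combined with $u_0\leq 0$ and $\alpha_1>0$ forces $v_0\geq 0$. On $[0,x_2]$ I rewrite the inhibitor equation as $v_0''-(\gamma+v_0^2)v_0=-u_0\leq 0$; with $v_0(0)>0$ from Lemma~\ref{v0(0)_pos} and $v_0(x_2)\geq 0$ from the previous step, the minimum principle precludes a negative interior minimum and gives $v_0\geq 0$ on $[0,x_2]$ as well. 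For strict positivity, suppose $v_0(x_0)=0$ for some $x_0>0$; then $v_0'(x_0)=0$. If $x_0\in(0,x_2)$ the strong minimum principle on $[0,x_2]$ forces $v_0\equiv 0$ there, contradicting $v_0(0)>0$. If $x_0=x_2$ the Hopf lemma on $[0,x_2]$ yields $v_0'(x_2^-)<0$, whereas $v_0\geq 0$ on $[x_2,\infty)$ with $v_0(x_2)=0$ forces $v_0'(x_2^+)\geq 0$, contradicting the $C^1$ regularity of $v_0$. If $x_0\in(x_2,\infty)$ I first claim $v_0\equiv 0$ on $[x_0,\infty)$: otherwise $v_0$ would attain a positive maximum at some $x^*\in(x_0,\infty)$ and the inhibitor equation at $x^*$ would give $v_0''(x^*)=(\gamma+v_0^2(x^*))v_0(x^*)-u_0(x^*)>0$, contradicting $v_0''(x^*)\leq 0$. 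Hence $v_0\equiv 0$ on $[x_0,\infty)$, which forces $u_0\equiv 0$ there; taking the leftmost $a_*$ with $u_0\equiv 0$ on $[a_*,\infty)$, the $C^1$ regularity supplies $(u_0,u_0',v_0,v_0')(a_*)=(0,0,0,0)$, while just to the left of $a_*$ the coupled ODE is valid (since $u_0\neq 0$ there), so backward ODE uniqueness gives $(u_0,v_0)\equiv(0,0)$ in a left neighborhood of $a_*$, contradicting the choice of $a_*$.

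The monotonicity claim is then a one-liner: on $[x_2,\infty)$, $v_0''=(\gamma+v_0^2)v_0-u_0>0$ since $v_0>0$ and $-u_0\geq 0$, so $v_0'$ is strictly increasing; combined with $v_0>0$ and $v_0\to 0$ at infinity this forces $v_0'<0$ throughout, whence $v_0\downarrow 0$. For the last assertion, suppose $u_0(y)=0$ at some $y\in(x_2,\infty)$ while $u_0<0$ somewhere in $(x_2,\infty)$. If $u_0$ satisfies the activator equation in a neighborhood of $y$, then $y$ is a local maximum of $u_0\leq 0$, so $u_0''(y)\leq 0$; yet the equation gives $u_0''(y)=v_0(y)/d>0$, a contradiction. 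If instead $u_0\equiv 0$ on a maximal interval with endpoint $a$ adjacent to a region where $u_0<0$ (necessarily to the left of $a$, since we are in $(x_2,\infty)$ where $u_0\leq 0$), the corner lemma gives $u_0'(a^-)=0$ while the activator ODE on the non-flat side yields $u_0''(a^-)=v_0(a)/d>0$; a Taylor expansion then forces $u_0>0$ in a one-sided neighborhood of $a$, contradicting $u_0<0$ there. I expect the main obstacle to be the case $x_0\in(x_2,\infty)$ of strict positivity: invoking backward ODE uniqueness at a quadruple zero of $(u_0,u_0',v_0,v_0')$ requires careful control of where the activator constraint becomes active, and I would lean on the corner lemma, Lemma~\ref{regularity}, and Lemma~\ref{limit_point} to handle the transitions.
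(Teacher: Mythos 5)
Your overall skeleton matches the paper's: split at $x_2$, get $v_0\geq 0$ on $[x_2,\infty)$ from $\psi_2\geq 0$, get $v_0>0$ on $[0,x_2]$ from the minimum principle and $v_0(0)>0$, then push positivity into the tail, read off $v_0'<0$, and finally rule out $u_0(y)=0$ beyond $x_2$. The places you diverge are mostly improvements or valid alternatives. Your convexity argument for $v_0'<0$ ($v_0''=(\gamma+v_0^2)v_0-u_0>0$ on $[x_2,\infty)$, so $v_0'$ strictly increases and must stay negative to decay to $0$) is a clean, purely ODE-level replacement for the paper's iterated Hopf lemma. Your Taylor-expansion argument for the last assertion is fine, though the paper's single stroke---strong maximum principle applied to $du_0''-(1-u_0)(\beta-u_0)u_0\geq 0$ on the connected interval $(x_2,\infty)$, which yields $u_0\equiv 0$ if $u_0$ ever touches $0$ there---is a bit tighter and automatically handles flat intervals.

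Your main worry, the case $x_0\in(x_2,\infty)$, is well placed. The paper dismisses this by ``applying Hopf at $z_0$ on $[z_0,\infty)$'', but at $z_0$ the value $v_0(z_0)=0$ is a \emph{minimum} of a \emph{sub}solution, so Hopf does not directly yield $v_0'(z_0)<0$; what the maximum principle actually gives is $v_0\equiv 0$ on $[z_0,\infty)$, which then forces $u_0\equiv 0$ there, and this degenerate scenario is exactly what must still be excluded. Your backward-uniqueness idea is one route, but as you note it requires knowing the activator ODE is in force on a full left-neighborhood of $a_*$, which is not automatic before the paper's later lemmas. A self-contained fix that avoids this issue entirely: on $[x_2,z_0]$, $v_0>0$, $v_0'<0$ (from $v_0''>0$ and $v_0'(z_0)=0$), and the quantity $E(x)=\tfrac12(v_0')^2-\tfrac{\gamma}{2}v_0^2-\tfrac14 v_0^4$ satisfies $E'=-u_0v_0'\leq 0$, so $E\geq E(z_0)=0$; hence $(v_0')^2\geq\gamma v_0^2$, i.e. $(\ln v_0)'\leq -\sqrt{\gamma}$, which is incompatible with $v_0(x)\to 0$ as $x\uparrow z_0$ over a finite interval. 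This closes the gap without touching where the activator constraint is active.
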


\begin{remark}
{The fact that $u_0$ changes sign at some $x_2 < \infty$ will be shown later in Lemma~\ref{sign_change}. The qualitative properties of $(u_0, v_0)$ stated in Lemma~\ref{pos_all} will therefore always hold.}
\end{remark}

\begin{proof}
It suffices to consider the case when $u_0$ changes the sign at $x=x_2$, otherwise Lemma~\ref{v0_pos} implies the positivity of $v_0$. Let us first consider the interval $[0, x_2]$, where
\[ v_0^{\prime\prime} - (\gamma + v_0^2)v_0 = -u_0 \leq 0.\]
By the maximum principle, $v_0$ cannot attain an interior non-positive minimum. Since $v_0(0) > 0$ and $v_0(x_2) = \psi(x_2)/\alpha_1 \geq 0$, it follows $v_0 > 0$ on $[0, x_2)$. We claim that $v_0(x_2) > 0$. For if not, the Hopf lemma implies $v_0^{\prime}(x_2) < 0$, and thus $v_0 < 0$ on $(x_2, x_2+\ep)$ for some $\ep > 0$. However from a different perspective,
\begin{equation} \label{v0_nonneg}
v_0 = \frac{1}{\alpha_1} (\psi_2 - u_0) \geq 0 \quad \text{ on $[x_2, \infty)$}
\end{equation}
by using Lemma~\ref{psi2}.
Therefore $v_0 > 0 $ on $[0, x_2]$. 

Next consider the interval $[x_2, \infty)$. The maximum principle applied to 
\begin{equation} \label{v0_positive}
v_0^{\prime\prime} - (\gamma + v_0^2)v_0 = -u_0 \geq 0
\end{equation}
implies $v_0$ cannot have an interior non-negative maximum. If $v_0$ touches $0$, it cannot go back up since $v_0$ then attains a positive maximum as it decays to $0$. Thus $v_0$ has to satisfy one of the following cases: \\

\noindent (A) $v_0$ decreases to 0 on $[x_2, \infty)$ with $v_0^{\prime} < 0$ by the Hopf lemma, or \\
(B) $v_0(z_0) = 0$ for some $z_0 > x_2$, where $z_0$ is the first point at which $v_0$ touches $0$.  \\

\noindent To eliminate (B), we apply the Hopf lemma to \eqref{v0_positive} on $[z_0, \infty)$ and conclude that $v_0^{\prime}(z_0) < 0$. This gives a rise to a contradiction since $v_0 \geq 0$ on $[x_2, \infty)$ as seen in \eqref{v0_nonneg}. 

The last statement is a consequence of the maximum principle applied to $du_0^{\prime\prime} - h(u_0)u_0 \geq 0$ on $(x_2, \infty)$ with $h(u) = (1-u)(\beta-u) \geq 0$ on the interval. 
\end{proof}

\noindent {The next corollary is an immediate consequence of the positivity of $v_0$ and Lemma~\ref{limit_point}.}

\begin{cor}
{The cases (P3), (Q3) and (R3) cannot occur.} 
\end{cor}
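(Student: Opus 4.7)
The plan is to argue by contradiction using exactly the two lemmas flagged by the authors in the paragraph preceding the corollary. Suppose for contradiction that one of the cases (P3), (Q3), or (R3) holds at some point $x_0$, meaning there exist intervals $(a_n, b_n) \subset [x_0-\ell, x_0]$ with $a_n, b_n \to x_0^{-}$ on which $u_0$ satisfies \eqref{FN_nonlinear_steady} while $u_0$ equals one of the constraint values $1$, $0$, or $\beta$ on the complement.

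Then Lemma~\ref{limit_point} applies directly and yields $v_0(x_0) = 0$ (together with $u_0'(x_0) = 0$ and $v_0'(x_0) = 0$). On the other hand, Lemma~\ref{pos_all} (together with Lemma~\ref{v0_pos} in the case $u_0 \geq 0$) asserts $v_0(x) > 0$ for every $x \in [0, \infty)$. In particular $v_0(x_0) > 0$, contradicting the vanishing of $v_0(x_0)$ obtained from Lemma~\ref{limit_point}.

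There is no real obstacle here: the two input lemmas have already done all the work. The only thing to verify is that Lemma~\ref{pos_all} is indeed applicable without knowing a priori whether $u_0$ changes sign, but its proof already splits into the two cases (sign change versus no sign change, the latter handled by Lemma~\ref{v0_pos}), so both regimes are covered. Hence the three oscillatory scenarios are ruled out, and combined with Lemma~\ref{regularity} and the corner lemma this allows the authors to conclude $u_0 \in C^1[0,\infty)$ globally.
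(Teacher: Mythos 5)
Your argument is exactly the paper's: the authors state the corollary is an immediate consequence of Lemma~\ref{limit_point} (which forces $v_0(x_0)=0$ at such a limit point) and of the positivity of $v_0$ from Lemma~\ref{pos_all}, yielding the same contradiction you describe. The proposal is correct and matches the paper's proof.
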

From Lemma~\ref{min_existence} we have $x_1>0$.  The above Corollary then implies either Case (P1) or (P2)  happens near $x=0$.
 In the former case, solution $u_0$ will be smooth near $x=0$ so that 
the natural boundary condition $u_0'(0)=0$ holds. For the latter case  when $u_0=1$ in a neighborhood of $x=0$, it is clear that $u_0'(0)=0$.  Thus we can conclude the followings.
\begin{cor} \label{bc0}
The minimizer $u_0$ satisfies $u_0'(0)=0$.
\end{cor}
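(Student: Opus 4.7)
The plan is to argue by case analysis on the value of $u_0(0)$, using Lemma~\ref{min_sequence}(ii) (which gives $x_1 > 0$) together with the just-proved corollary that cases (P3), (Q3), (R3) are excluded. In every case we will reduce to one of two situations: either $u_0$ is locally constant near $x = 0$ (so $u_0'(0) = 0$ trivially), or $u_0$ is smooth in a one-sided neighborhood $[0, \delta)$ and satisfies (\ref{FN_nonlinear_steady}a) classically there, in which case the standard natural-boundary-condition argument --- mimicking the proof of Lemma~\ref{natural_bndry} --- forces $u_0'(0) = 0$.

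First, because $x_1 > 0$, the definition of $\A$ forces $u_0(0) \in [\beta, 1]$, so we have three subcases. If $u_0(0) \in (\beta, 1)$, continuity gives $u_0(x) \in (\beta, 1)$ on some $[0, \delta)$, so $u_0 \in C^\infty[0, \delta)$ and satisfies (\ref{FN_nonlinear_steady}a) (as discussed at the end of Section~\ref{sec_existence}). If $u_0(0) = 1$, only cases (P1) or (P2) remain; in (P2) $u_0 \equiv 1$ on $[0, \delta]$ giving $u_0'(0) = 0$ directly, while in (P1) we have $u_0 < 1$ on $(0, \delta)$ and, since $u_0 \geq \beta$ on $[0, x_1]$, can shrink $\delta$ so that $u_0(x) \in [\beta, 1)$ with $u_0(x) \neq 0, \beta, 1$ for $x \in (0, \delta)$; Lemma~\ref{regularity} then extends $u_0$ to $C^\infty[0, \delta]$ satisfying (\ref{FN_nonlinear_steady}a) on $[0, \delta]$. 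If $u_0(0) = \beta$, only (R1) or (R2) remain and the same reasoning applies.

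Finally, in every non-trivial subcase $u_0 \in C^\infty[0, \delta]$ satisfies $du_0'' + f(u_0) - v_0 = 0$ pointwise on $[0, \delta]$. Choosing any test function $\varphi \in C^\infty[0, \infty)$ with compact support in $[0, \delta)$, the identity $\hat{J}'(u_0)\varphi = 0$ reads
\begin{equation*}
\int_0^\infty \bigl\{ d u_0' \varphi' - f(u_0)\varphi + v_0 \varphi \bigr\}\, dx = 0,
\end{equation*}
and integration by parts (permissible on $[0, \delta]$ by the extra regularity just obtained) yields
\begin{equation*}
\int_0^\delta \bigl\{ -d u_0'' - f(u_0) + v_0\bigr\}\varphi\, dx \; - \; d\, u_0'(0)\,\varphi(0) \; = \; 0.
\end{equation*}
The integrand vanishes by (\ref{FN_nonlinear_steady}a), so $u_0'(0)\varphi(0) = 0$ for arbitrary $\varphi(0)$, whence $u_0'(0) = 0$.

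I do not expect any real obstacle: all heavy lifting --- ruling out oscillatory accumulation at a value in $\{0, \beta, 1\}$ (the corollary just before), interior regularity of $u_0$ away from these three levels, and the one-sided $C^\infty$ extension to a level crossing --- has already been done. The only thing to be careful about is to record that on the ambiguous side (to the right of $x = 0$) $u_0$ does reach an interval where (\ref{FN_nonlinear_steady}a) holds classically before any further level-set of $\{0, \beta, 1\}$ is encountered, which is ensured by shrinking $\delta$ and using $x_1 > 0$ from Lemma~\ref{min_sequence}(ii).
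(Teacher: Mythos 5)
Your proof follows the same approach as the paper: invoke $x_1>0$ from Lemma~\ref{min_existence} to restrict $u_0(0)\in[\beta,1]$, use the preceding corollary to exclude (P3)/(Q3)/(R3), and then conclude either $u_0$ is locally constant or smooth near $x=0$ so that the natural boundary condition applies. The subcase analysis you write out is a slightly more explicit version of the paper's two-line argument.

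There is, however, a subtle point that your write-up asserts rather than proves, and which the paper also passes over quickly. In the (P1) or (R1) subcases with $u_0(0)=1$ or $u_0(0)=\beta$, the identity $\hat{J}'(u_0)\varphi=0$ for arbitrary $\varphi$ supported in $[0,\delta)$ is \emph{not} immediate, because the constraint $\beta\le u_0\le1$ on $[0,x_1]$ is active at $x=0$: if $\varphi(0)\neq0$, then $u_0+\ep\varphi$ violates $\A$ for one sign of $\ep$. You therefore only obtain a one-sided variational inequality from minimality. For example, if $u_0(0)=1$ and $u_0<1$ on $(0,\delta)$, choosing $\varphi$ with $\varphi(0)<0$ and letting $\ep\downarrow0$ gives $\hat{J}'(u_0)\varphi\ge0$, hence (after the integration by parts you perform) $u_0'(0)\ge0$; the complementary inequality $u_0'(0)\le0$ then comes directly from $u_0\le u_0(0)=1$ on $[0,\delta]$, yielding $u_0'(0)=0$. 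An analogous two-step argument handles $u_0(0)=\beta$. As written, the step ``the identity $\hat{J}'(u_0)\varphi=0$ reads \ldots'' is a genuine gap in those two subcases; the conclusion is correct, but the derivation should be replaced by the one-sided inequality combined with the sign restriction imposed by the constraint set $\A$.
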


\noindent With the new information $v_0 > 0$ everywhere, we in fact exclude the possibility that $u_0 =1$ on any interval. 

\begin{lem}
If $ d \leq d_1$ then $\beta_1 < \max u_0 < 1$.
\end{lem}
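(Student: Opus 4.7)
The plan is to prove $\max u_0 < 1$ and $\max u_0 > \beta_1$ separately, each by contradiction but via different mechanisms: the upper bound by a local argument (pointwise equation or one-sided variational perturbation), and the lower bound by a global sign argument on the functional.

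For $\max u_0 < 1$, first recall that the corollary preceding this lemma has ruled out (P3), so the set $E := \{x : u_0(x) = 1\}$ is a disjoint union of closed intervals (possibly degenerate). Assume for contradiction $u_0(x_0) = 1$ for some $x_0 \geq 0$. If $x_0$ is isolated in $E$, then $u_0$ is $C^\infty$ in a punctured neighborhood and satisfies (\ref{FN_nonlinear_steady}a) there; by Lemma~\ref{regularity} the equation extends by continuity up to $x_0$. Since $x_0$ is a local maximum with $u_0'(x_0)=0$ (invoking Corollary~\ref{bc0} in case $x_0=0$), Taylor gives $u_0''(x_0) \leq 0$. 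However $f(1)=0$ together with Lemma~\ref{pos_all} yields $d\,u_0''(x_0) = v_0(x_0) - f(1) = v_0(x_0) > 0$, a contradiction.

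If instead $u_0 \equiv 1$ on some non-degenerate closed interval $[a,b] \subseteq [0,x_1]$, I would choose a bump $\varphi \in H^1(0,\infty)$ with $0 \leq \varphi \not\equiv 0$ and $\mathrm{supp}\,\varphi \subset (a,b)$, and form the perturbation $\unew := u_0 - \epsilon\varphi$ with $\epsilon > 0$ small, which clearly lies in $\A$. Since $u_0$ is constant on the support of $\varphi$, the gradient change is $\tfrac{d\epsilon^2}{2}\integral \varphi_x^2\,dx = O(\epsilon^2)$; and since $F'(1) = -f(1) = 0$, Taylor expansion of $F(1-\epsilon\varphi)$ shows the potential change is likewise $O(\epsilon^2)$. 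Using Lemma~\ref{nonlocal_difference} and $\N\unew + \N u_0 = 2v_0 + O(\epsilon)$ in $L^2$ (via Lemma~\ref{difference}), the nonlocal change reduces to $-\epsilon \integral \varphi\, v_0\,dx + O(\epsilon^2)$. Because $v_0 > 0$ by Lemma~\ref{pos_all}, the leading term is strictly negative, so $J(\unew) < J(u_0)$ for small $\epsilon$, contradicting minimality of $u_0$.

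For $\max u_0 > \beta_1$, I would argue by contradiction and assume $\max u_0 \leq \beta_1$. The key observation is that $F \geq 0$ on $(-\infty, \beta_1]$: this holds on $[0,\beta_1]$ by the definition of $\beta_1$, and on $(-\infty, 0]$ because $F(0) = 0$ while $F'(u) = -f(u) \leq 0$ for $u \leq 0$ (all three factors of $f(u) = u(1-u)(u-\beta)$ having the appropriate signs). Consequently $\integral F(u_0)\,dx \geq 0$; combined with the non-negativity of the gradient term and of the nonlocal term (Lemma~\ref{nonlocal_pos} together with $(\N u_0)^4 \geq 0$), this forces $J(u_0) \geq 0$, which contradicts $J(u_0) \leq J(q_0) \leq -M_0 < 0$ from Lemma~\ref{J_below}.

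The main obstacle is the interval case of the upper bound: on $[a,b]$ the first-order variations of both the gradient and potential terms vanish (the former because $u_0$ is constant, the latter precisely because $f(1)=0$), so one cannot conclude directly that $u_0$ is not a minimizer; one must instead extract the leading behavior from the nonlocal term and use the strict positivity $v_0 > 0$ just established in Lemma~\ref{pos_all} to obtain a definite sign.
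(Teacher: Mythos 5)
Your proof is correct, and the lower bound $\max u_0>\beta_1$ is exactly the paper's argument (the paper states it in one line; you merely spell out why $F\ge 0$ on $(-\infty,\beta_1]$). For the upper bound, however, you take a genuinely different route. The paper avoids your case split entirely: it picks $x_0$ to be a point of $\{u_0=1\}$ with $u_0<1$ immediately to its right (possible since that set is compact once (P3) is excluded), notes that (\ref{FN_nonlinear_steady}a) then holds on $[x_0,x_0+\delta_1]$, and rewrites it as $d(u_0-1)''-h(x)(u_0-1)=v_0\ge 0$ with $h=u_0(u_0-\beta)>0$; the Hopf lemma at the boundary maximum $x_0$ forces $u_0'(x_0)<0$, which contradicts either the natural boundary condition (if $x_0=0$) or the constraint $u_0\le 1$ on the left of $x_0$. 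This single maximum-principle argument covers both your "isolated point" and "interval" cases at once. Your isolated-point argument (second-derivative test against $du_0''(x_0)=v_0(x_0)>0$) is a legitimate pointwise version of the same idea, and your interval argument — a first-variation computation in which the gradient and potential changes are $O(\epsilon^2)$ (the latter because $f(1)=0$) while Lemma~\ref{nonlocal_difference} and $v_0>0$ yield a strictly negative $O(\epsilon)$ term — is sound and is essentially a localized rerun of the truncation technique of Lemma~\ref{nonloc_decrease}. What the paper's route buys is brevity and uniformity; what yours buys is that the interval case needs only $v_0>0$ on the interval and the variational structure, not the sign of $h$, so it would survive modifications of $f$ near $u=1$. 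Both arguments correctly rely on the positivity of $v_0$ established in Lemma~\ref{pos_all}, which is why the lemma sits where it does.
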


\begin{proof}
To show $\max u_0 < 1$, suppose there exists an $x_0 \in [0, x_1)$ such that $u_0(x_0) = 1$. 
Without loss of generality we can assume $u_0<1$ on $(x_0, x_0+\delta_1]$ for some small $\delta_1>0$.
Consequently $u_0$ satisfies (\ref{FN_nonlinear_steady}a) on $[x_0,x_0+\delta_1]$.
By making $\delta_1$ smaller if needed, we 
set $h(x) := u_0(x)(u_0(x) - \beta) >0$ on $[x_0,x_0+\delta_1]$.  This gives
\[d(u_0 - 1)^{\prime\prime} - h(x)(u_0 -1) = v_0 \geq 0. \]
By the maximum principle, $u_0 -1$ cannot attain an interior non-negative maximum on $(x_0,x_0+\delta_1)$ and moreover, the Hopf lemma dictates that 
$u_0'(x_0)<0$. Thus $x_0 \ne 0$ by Corollary~\ref{bc0}. With $x_0>0$, we see that $u_0>1$ in some interval $[x_0-\delta_2, x_0)$ for some small $\delta_2>0$.
This is a contradiction. 

Lastly, we need $\integral F(u_0) \,dx < 0$ for $J(u_0) < 0$ since all other terms are positive. It must hold that $\max u_0 > \beta_1$. 
\end{proof}

\section{On the constraint $u_0 = 0$} \label{sec_constraint0}
\setcounter{equation}{0}
\renewcommand{\theequation}{\thesection.\arabic{equation}}

At the moment we have not eliminated the possibilities of Cases (Q2) and (R2), i.e. there may exist intervals on which $u_0=\beta$ or
$u_0=0$. As a consequence 
 $x_1$ and $x_2$ as defined in the admissible set $\A$ may not be unique.
Let $\{ x_2 \}$ denote the set of points that represent any $x_2$.
With the established qualitative properties of $(u_0, v_0)$, we are ready to show that there are no intervals on which $u_0$ is identical to 0; to
be more precise the set $\{x_2\}$ has only 1 point at which $u_0$ changes sign. The truncation argument will serve as the key tool for the proofs in this section. 

\begin{lem} \label{nonloc_decrease}
Suppose we trim $u_0$ on a compact support such that $\unew \in \A$ with $\unew \leq u_0$. If $\|\unew - u_0\|_{L^{\infty}(0, \infty)} = O(\ep)$ for some $\ep > 0$, then for $\ep$ sufficiently small, the nonlocal energy decreases as well. That is,
\begin{equation*}
\integral \left \{ \left ( \frac{1}{2} \unew \N\unew + \frac{1}{4}(\N\unew)^4 \right) - \left ( \frac{1}{2}u_0\N u_0 + \frac{1}{4}  (\N u_0)^4 \right) \right \} \, dx < 0\;.
\end{equation*}
\end{lem}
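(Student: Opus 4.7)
The approach is to apply Lemma~\ref{nonlocal_difference} to write the nonlocal energy change as a dominant linear-in-$p$ term (where $p := u_0 - \unew \geq 0$) plus a cubic error, then use the strict positivity $v_0 = \N u_0 > 0$ from Lemma~\ref{pos_all} together with the $L^\infty$-stability of $\N$ to control the signs. Setting $\vnew = \N\unew$, Lemma~\ref{nonlocal_difference} immediately gives
\[
E_{nl} := \integral \left\{\tfrac{1}{2}\unew\vnew + \tfrac{1}{4}\vnew^4 - \tfrac{1}{2}u_0 v_0 - \tfrac{1}{4}v_0^4\right\}dx = -\tfrac{1}{2}\integral p(\vnew+v_0)\,dx + II =: I + II,
\]
where the same lemma bounds $|II| \leq \tfrac{(M+1)^2}{4}\max\{1,1/\gamma^2\}\,\integral p^2\,dx \leq C_\gamma \|p\|_{L^\infty}\integral p\,dx = O(\epsilon)\integral p\,dx$, so the cubic error is negligible compared with $\integral p\,dx$.

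The main step is to produce a strictly negative $I$ of order $\integral p\,dx$. Since $p$ has compact support, say contained in $[0,R]$, and $v_0$ is continuous and strictly positive on $[0,\infty)$ by Lemma~\ref{pos_all}, the quantity $c_0 := \min_{[0,R]} v_0$ is strictly positive. Lemma~\ref{difference} combined with the Sobolev embedding of Lemma~\ref{u_embedding} yields
\[
\|\vnew - v_0\|_{L^\infty(0,\infty)} \leq \sqrt{2}\,\|\vnew - v_0\|_{H^1} \leq \sqrt{2}\max\{1,1/\gamma\}\,\|p\|_{L^2} = O(\epsilon).
\]
Hence for $\epsilon$ sufficiently small $\vnew \geq v_0 - c_0/2 \geq c_0/2$ on $[0,R]$, so $\vnew + v_0 \geq 3c_0/2$ on $\mathrm{supp}\,p$, giving $I \leq -\tfrac{3c_0}{4}\integral p\,dx$.

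Adding the two estimates one obtains $E_{nl} \leq \bigl(-\tfrac{3c_0}{4} + O(\epsilon)\bigr)\integral p\,dx < 0$ for all sufficiently small $\epsilon$, which is the claim. The principal (though mild) obstacle is guaranteeing the uniform positive lower bound $c_0 > 0$ for $v_0$ on $\mathrm{supp}\,p$; this is precisely what Lemma~\ref{pos_all} was established for. Without the strict positivity of $v_0$, the linear term $I$ could degenerate and the truncation would not be guaranteed to decrease the nonlocal energy.
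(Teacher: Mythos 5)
Your proof is correct and follows essentially the same route as the paper: both rest on the decomposition of Lemma~\ref{nonlocal_difference}, the strict positivity of $v_0$ (Lemma~\ref{pos_all}) on the compact support of the trimming to make the linear term strictly negative of order $\int_0^\infty p\,dx$, and the observation that the remainder is negligible. The only difference is in bookkeeping of that remainder: you bound it via $\int_0^\infty p^2\,dx \leq \|p\|_{L^\infty}\int_0^\infty p\,dx = O(\ep)\int_0^\infty p\,dx$ using the second estimate of Lemma~\ref{nonlocal_difference} directly, whereas the paper routes it through the Sobolev embedding $H^1\hookrightarrow L^3$ and Lemma~\ref{difference} to obtain $O\bigl(\|p\|_{L^1}^{3/2}\bigr)$ --- both versions rely on the same implicit uniformity of the support and yield the same conclusion.
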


\begin{proof}
With $u_{new} -u_0 \leq 0$, Lemma~\ref{N_increasing} gives 
${\cal N} u_{new} \leq {\cal N} u_0=v_0$. Let $I = \{x: \unew - u_0 \neq 0\}$ have a compact support.  If $\ep$ is sufficiently small, by continuity ${\cal N} u_{new}$ will remain positive on $I$. Then it follows from Lemma~\ref{nonlocal_difference} and Lemma~\ref{v_upperbnd} that
\begin{align*}
&\integral \left \{ \left ( \frac{1}{2} \unew \N\unew + \frac{1}{4}(\N\unew)^4 \right) - \left ( \frac{1}{2}u_0\N u_0 + \frac{1}{4}  (\N u_0)^4 \right) \right \} \, dx\\
= &\; \frac{1}{2}\int_{I} (\unew - u_0)(\N\unew + \N u_0) \, dx +  \frac{1}{4} \integral(\N\unew + \N u_0)(\N \unew - \N u_0)^3 \, dx \\
\leq& \; \frac{1}{2} \min_{x \in I} v_0(x) \int_{I} (\unew - u_0) \, dx +  {\frac{(M_1 + 1)}{2} \integral\lvert\N \unew - \N u_0\rvert^3 \, dx} \\
= & - \frac{1}{2}  \min_{x \in I}v_0(x)\|\unew - u_0\|_{L^1(0, \infty)} + \frac{(M_1 + 1)}{2} \|\N\unew - \N u_0\|^3_{L^3(0, \infty)} \\
\leq& - \frac{1}{2}  \min_{x \in I}v_0(x)\|\unew - u_0\|_{L^1(0, \infty)} + \frac{C_0(M_1 + 1)}{2} \|\N\unew - \N u_0\|^{3}_{H^1(0, \infty)}
\end{align*}
for some positive constant $C_0$, where the last inequality follows from the Sobolev embedding $H^1(0, \infty)\hookrightarrow  L^3(0, \infty)$. Finally by applying Lemma~\ref{difference}, we obtain
\begin{align}
&\integral \left \{ \left ( \frac{1}{2} \unew \N\unew + \frac{1}{4}(\N\unew)^4 \right) - \left ( \frac{1}{2}u_0\N u_0 + \frac{1}{4}  (\N u_0)^4 \right) \right \} \, dx \notag \\
\leq& -\frac{1}{2} \min_{x \in I}v_0(x)\|\unew - u_0\|_{L^1(0, \infty)} + \frac{C_0(M_1 + 1)}{2} \max\{1, 1/\gamma^3\} \|\unew - u_0\|^{3}_{L^2(0, \infty)} \notag \\
\leq& - \frac{1}{2} \min_{x \in I}v_0(x)\|\unew - u_0\|_{L^1(0, \infty)} + {\sqrt{2} \, C_0(M_1 + 1)^{5/2}}
\max\{1, 1/\gamma^3\} \|\unew - u_0\|^{3/2}_{L^1(0, \infty)}\label{change_in_O}  \\
<& \, 0 \notag
\end{align}
{for sufficiently small $\ep$, as $\|\unew - u_0\|_{L^1(0, \infty)}$ can be made arbitrarily small.}\\
\end{proof}

When we refer to $d_1$ in  the following lemmas, we understand that  it depends on $\gamma$, i.e. $d_1=d_1(\gamma)$.

\begin{lem} \label{u0(x2)}
{Suppose $\gamma \leq \gamma_0$ and $d \leq d_1$. Take the largest $x_1$ so that $u_0 < \beta$ on some small neighborhood $(x_1, x_1 + \delta]$ and, if $\{x_2\}$ is non-empty, take the smallest $x_2$  such that $u_0 > 0$ on some neighborhood $[x_2- \delta, x_2)$. Then $u'_0 < 0$ on the interval $[x_1, x_2)$; the same is true on $[x_1, \infty)$ if $\{x_2\}$ is empty. Moreover if $u_0$ changes sign at $x_2$, then $u'_0(x_2) < 0$ and $u_0 < 0$ on $(x_2, \infty)$.} 
\end{lem}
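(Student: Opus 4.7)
The plan is to convert (\ref{FN_nonlinear_steady}a) into strict convexity of $u_0$ on $(x_1, x_2)$ using $v_0 > 0$ from Lemma~\ref{pos_all}, read off $u_0'<0$ from the boundary values of $u_0$, and handle the sign change at $x_2$ by a Hopf-type ODE argument together with a downward perturbation that rules out plateaus of $u_0$ at zero.

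First I would pin down the range of $u_0$ and then establish strict convexity. Continuity together with the maximality of $x_1$ gives $u_0(x_1) = \beta$ and $u_0 < \beta$ strictly on $(x_1, x_2)$, since a later touch of $\beta$ would either violate admissibility or provide a larger admissible choice of $x_1$. On the open set $\{u_0 \in (0, \beta)\}\cap (x_1, x_2)$, equation~(\ref{FN_nonlinear_steady}a) together with $v_0 > 0$ and $f(\xi) = \xi(1-\xi)(\xi-\beta) < 0$ for $\xi \in (0, \beta)$ gives
\[
d u_0'' \;=\; v_0 - f(u_0) \;>\; 0,
\]
so $u_0$ is strictly convex on every connected component of $\{u_0 > 0\}\cap (x_1, x_2)$. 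I would next rule out any zero of $u_0$ in $(x_1, x_2)$. Let $[\alpha, \beta']$ be the maximal interval on which $u_0 \equiv 0$ (possibly a single point); the choice of $x_2$ with $u_0 > 0$ on $[x_2-\delta, x_2)$ forces $\beta' < x_2$, so $u_0 > 0$ just after $\beta'$. The $C^1$-regularity from Section~\ref{sec_positive} gives $u_0'(\beta') = 0$, and strict convexity on the positive piece to the right of $\beta'$ then forces $u_0'$ to become positive and $u_0$ strictly increasing on $(\beta', x_2)$, contradicting $u_0 \to 0$ at $x_2^{-}$ (or at $\infty$). Hence $u_0 > 0$ strictly on $(x_1, x_2)$, strict convexity holds throughout, and $u_0'$ is strictly increasing. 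Since $u_0 \geq 0$ with $u_0 \to 0$ at the right endpoint, $u_0'(x_2^{-}) \leq 0$ (resp.\ $u_0' \to 0$ if $x_2 = \infty$), so strict monotonicity of $u_0'$ yields $u_0'(z) < u_0'(x_2^{-}) \leq 0$ for every $z \in [x_1, x_2)$.

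For the sign-change statement at $x_2$: I would first exclude $u_0 \equiv 0$ on any interval $[x_2, x_2+\eta]$. For a non-negative, non-trivial $\phi \in C_c^\infty((x_2, x_2+\eta))$, the perturbation $\unew = u_0 - \ep\phi$ belongs to $\A$ for small $\ep > 0$ (with $x_2^{new}$ readjusted if necessary), and formula~\eqref{J_derivative} with $u_0' = 0$ and $f(0) = 0$ on the plateau gives
\[
J(\unew) - J(u_0) \;=\; -\ep\!\int_{x_2}^{x_2+\eta}\! v_0\,\phi \, dx \,+\, O(\ep^2) \;<\; 0,
\]
contradicting minimality (using $v_0 > 0$). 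The maximum-principle argument from the end of Lemma~\ref{pos_all} then promotes this to $u_0 < 0$ on all of $(x_2, \infty)$. Equation~(\ref{FN_nonlinear_steady}a) now holds in a right neighborhood of $x_2$ with $u_0''(x_2^{+}) = v_0(x_2)/d > 0$; if $u_0'(x_2)$ were $0$, the Taylor expansion $u_0(x_2+h) = (v_0(x_2)/(2d))h^2 + o(h^2)$ would be positive for small $h > 0$, contradicting $u_0 \leq 0$ to the right, so $u_0'(x_2) < 0$.

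The main technical obstacle is the zero-exclusion step: handling isolated zeros, bounded interior plateaus, and (in the case $\{x_2\} = \emptyset$) a limiting plateau reaching $\infty$ requires combining $C^1$-regularity across plateau boundaries, strict convexity on positive pieces, the specific choice of $x_2$, and--for the unbounded plateau--the same downward perturbation argument as in the final paragraph, which is admissible precisely because $x_2^{new}$ can be chosen at the plateau's left endpoint.
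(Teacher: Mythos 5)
Your route is genuinely different from the paper's and, in outline, viable. The paper first proves that $u_0$ is non-increasing on $[x_1,x_2]$ by a variational trimming argument: if a hump exists, lowering its top by $\ep$ decreases the gradient, potential and (by Lemma~\ref{nonloc_decrease}) nonlocal energies; only afterwards does it invoke $du_0''=v_0-f(u_0)>0$ to upgrade ``non-increasing'' to $u_0'<0$. You instead run the ODE convexity argument from the start: on $\{0<u_0<\beta\}$ the Euler--Lagrange equation holds and $u_0''>0$, so $u_0'$ is strictly increasing and no interior local maximum can occur; the whole proof then reduces to showing that the constraint values $0$ and $\beta$ are not attained inside $(x_1,x_2)$. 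This is an attractive simplification — it avoids Lemma~\ref{nonloc_decrease} entirely for the monotonicity step and uses the variational structure only to kill a zero plateau at $x_2$, where your first-variation computation via \eqref{J_derivative} is clean and correct. Your treatment of the sign change at $x_2$ (Taylor expansion with $u_0''(x_2^+)=v_0(x_2)/d>0$) is a sound substitute for the paper's Hopf-lemma argument, once the accumulation case (Q3) is ruled out by the corollary following Lemma~\ref{pos_all}.

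The step that does not hold as written is the exclusion of interior touches of $\beta$. You assert $u_0<\beta$ strictly on $(x_1,x_2)$ because ``a later touch of $\beta$ would either violate admissibility or provide a larger admissible choice of $x_1$.'' Neither is true: the admissible set \eqref{admissible_set} only requires $0\le u_0\le\beta$ on $(x_1,x_2]$, so $u_0$ may rise back to the value $\beta$, even on a whole subinterval — exactly the scenario the paper flags with ``the top of the hump can even go up all the way and form an interval on which $u_0=\beta$'' — and a point $y>x_1$ with $u_0(y)=\beta$ is not a valid choice of $x_1$ because $u_0<\beta$ somewhere on $(x_1,y)\subset[0,y]$. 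Since your convexity argument operates only where $0<u_0<\beta$, an untreated return to $\beta$ is precisely a hump your proof does not exclude. The gap is repairable with your own tools: let $y$ be the first point of $(x_1,x_2]$ with $u_0(y)=\beta$; then $y$ is a local maximum of $u_0$ (as $u_0\le\beta$ on $(x_1,x_2]$), so $u_0'(y)=0$ by the $C^1$ regularity, while on a small left-neighborhood $(y-\eta,y)$ one has $0<u_0<\beta$ by continuity, hence $u_0''>0$ and $u_0'<u_0'(y^-)=0$ there, forcing $u_0>u_0(y)=\beta$ on $(y-\eta,y)$, a contradiction. With this repair (and with $\beta'$ taken to be the supremum of the zero set of $u_0$ in $(x_1,x_2)$ in your zero-exclusion step, so that $u_0>0$ on all of $(\beta',x_2)$), your argument goes through.
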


\begin{remark} 
We will establish in Lemma~\ref{u0neq0} and Lemma~\ref{sign_change} that
 $u_0$ changes sign at a unique $x_2 < \infty$; therefore $u_0$ will satisfy all the qualitative properties stated in Lemma~\ref{u0(x2)}.
\end{remark}

\begin{proof}
Suppose \{$x_2$\} is nonempty and there exist $x_1 < y_1 < y_2 < x_2$ such that $0 < u_0(y_1) < u_0(y_2)$. Since $u_0(x_2) = 0$, a local maximum of $u_0$ is attained  between $y_1$ and $x_2$, thereby creating a hump. The top of the hump can even go up all the way and form an interval on which $u_0=\beta$. 
Take a small positive $\ep$ and let
\begin{equation*}
\unew(x) =
\begin{cases}
u_0(x)  -\ep , \, & \text{   if } x \geq y_1 \; \mbox{and} \; u_0(x) \geq {\max_{[y_1,x_2]}} u_0 - \ep,  \\
u_0(x) ,  \,  & \text{   otherwise. }
\end{cases}
\end{equation*}
In other words we trim a small height $\ep$ from the top of the hump and obtain a $u_{new} \in \A$. Upon trimming,
it is clear that the gradient energy decreases. As {$F(\xi)$ is strictly monotone increasing 
for $\xi \in [0, \beta]$}, the potential energy also decreases. 
Finally since $\unew - u_0$ has a compact support, the nonlocal energy decreases as well by Lemma~\ref{nonloc_decrease}.
These lead to $J(u_{new})<J(u_0)$, contradicting $u_0$ being a minimizer. It forces us to conclude that 
$u_0$ is non-increasing on $[x_1, x_2]$. By Lemma~\ref{regularity}, $u_0 \in C^{\infty}[x_1, x_2]$ and satisfies (\ref{FN_nonlinear_steady}a) on the interval. Since $du_0^{\prime\prime} = v_0 - f(u_0) > 0$ on $[x_1, x_2]$, the Hopf lemma implies that $u_0^{\prime} < 0$ on $[x_1, x_2)$. If $\{x_2\}$ is empty,  the same argument still works if we take $x_1 < y_1 < y_2 < \infty$. This eads to $u_0'<0$ on $[x_1,\infty)$ in this case.

For finite $x_2$, only one of the followings will happen: \\ 
(a) $u_0$ becomes negative on $(x_2, x_2 + \delta]$ for some finite $\delta>0$, \\
(b) {$u_0 = 0$ on $[x_2, x_2 + \delta]$ and $u_0 < 0$ on $(x_2 + \delta, x_2 + \delta + \delta_1]$ for some positive $\delta$ and $\delta_1$, or} \\
(c) $u_0 = 0$ on $[x_2,  \infty)$.
 \vspace{2mm}

Assume $u_0$ changes sign at $x_2$. Then we need to consider 
only cases (a) and (b).
Suppose case (b) occurs. Let $h(x) = -(u_0 - \beta)(1-u_0)$, which is positive on $[x_2 + \delta, \infty)$. Since $du_0^{\prime\prime} - h(x)u_0 = v_0 > 0$ on the interval, we apply the Hopf lemma to conclude that $u_0^{\prime}(x_2 + \delta) < 0$. This contradicts the result from the corner lemma that $u_0^{\prime}(x_2 + \delta) = 0.$ Therefore only case (a) holds and $u_0^{\prime}(x_2) < 0$ follows from the Hopf lemma on $[x_2, x_2 + \delta]$. The same Hopf lemma argument will also prevent $u_0$ from touching zero again on $(x_2,\infty)$, which leads us to conclude that $u_0 < 0$ on $(x_2, \infty)$.
\end{proof}
\noindent Next we eliminate case (c) in the above proof.

\begin{lem} \label{u0neq0}
Suppose $\gamma \leq \gamma_0$ and $d \leq d_1$. Whether $u_0$ changes sign or not, there cannot be an interval $[a, b]$ where $u_0 = 0$. 
\end{lem}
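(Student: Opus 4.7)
The plan is to argue by contradiction using the first-order variational inequality associated with the obstacle constraint $w \le 0$ in the $(x_2,\infty)$ part of the admissible set $\A$. The payoff is that when $u_0 \equiv 0$ on an interval, the only surviving term in the Fr\'echet derivative of $\hat J$ in the direction of a downward perturbation is $\int v_0 \hat w\,dx$, which has the wrong sign because $v_0>0$.

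First I would pin down the location of any purported zero interval $[a,b]$. On $[0,x_1]$ we have $u_0\ge\beta>0$, and on $[x_1,x_2)$ Lemma~\ref{u0(x2)} gives $u_0'<0$ (strict), so $u_0>0$ on $[0,x_2)$ and therefore $[a,b]\subseteq[x_2,\infty)$. Pick any $\hat w\in C_c^\infty\bigl((a,b)\bigr)$ with $\hat w\le 0$ and $\hat w\not\equiv 0$, and set $u_s=u_0+s\hat w$. The admissibility check is the one place that needs care: outside $(a,b)$, $u_s=u_0$; on $(a,b)$, $u_s=s\hat w\in[-(M+1),0]$ provided $s\le (M+1)/\|\hat w\|_{L^\infty}$. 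Choosing $x_2^{new}\in[x_2,a]$ as the new transition point, one verifies that $u_s\in\A$ for all sufficiently small $s>0$: the pieces $[0,x_1]$, $(x_1,x_2^{new}]$ and $(x_2^{new},\infty)$ all meet the constraints of $\eqref{admissible_set}$.

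Since $u_0$ minimizes $J=\hat J|_\A$, the inequality $\hat J(u_s)\ge \hat J(u_0)$ for small $s>0$ together with the Fr\'echet differentiability of $\hat J$ (established in Section~\ref{sec_varForm}) yields the one-sided first-order condition
\[
\hat J'(u_0)\hat w\,=\,\integral\bigl\{du_0'\hat w'+v_0\hat w-f(u_0)\hat w\bigr\}\,dx\;\ge\;0.
\]
Because $\hat w$ is supported in $(a,b)$ where $u_0\equiv 0$, we have $u_0'=0$ and $f(u_0)=f(0)=0$ on the support, so the above reduces to
\[
\int_a^b v_0\,\hat w\,dx\;\ge\;0.
\]
By Lemma~\ref{pos_all} (which applies since $d\le d_1$), $v_0>0$ on $(0,\infty)$; combined with $\hat w\le 0$, $\hat w\not\equiv 0$, this integral is strictly negative, a contradiction.

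The main obstacle is really just the admissibility verification in step two: one must be sure that a downward perturbation keeps $u_s$ inside $\A$, which is exactly why locating $[a,b]$ inside $[x_2,\infty)$ at the start is crucial. Once admissibility is in hand, the rest is a clean one-line computation because the flatness $u_0\equiv 0$ kills both the gradient and potential contributions to $\hat J'(u_0)\hat w$, leaving only the nonlocal term, whose sign is fixed by the strict positivity of $v_0$.
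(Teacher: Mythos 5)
Your proof is correct, and it reaches the contradiction by a genuinely different mechanism than the paper. The paper's proof is a finite truncation: it inserts the explicit dip $-\ep\sin(\pi(x-a)/(b-a))$ on $[a,b]$, notes the gradient and potential contributions are $O(\ep^2)$, and controls the nonlocal term through the quantitative estimate of Lemma~\ref{nonloc_decrease} (see \eqref{change_in_O}), obtaining $J(\unew)-J(u_0)\leq O(\ep^2)-C\ep+O(\ep^{3/2})<0$. You instead invoke the one-sided first-order optimality condition $\hat J'(u_0)\hat w\geq 0$ for admissible downward directions, using the Fr\'echet derivative formula \eqref{J_derivative} simplified by the weak form of (\ref{FN_nonlinear_steady}b); on the flat set the gradient and potential terms vanish identically and the nonlocal first variation is exactly $\int_a^b v_0\hat w\,dx$, so positivity of $v_0$ (Lemma~\ref{pos_all}) finishes it with no $\ep$ versus $\ep^{3/2}$ bookkeeping and no appeal to Lemma~\ref{nonloc_decrease} or Lemma~\ref{difference}. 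Both routes rest on the same two prerequisites already available at this stage (Lemma~\ref{u0(x2)} to localize $[a,b]$ in $[x_2,\infty)$, and $v_0>0$), and your admissibility check is the right one: on the support of $\hat w$ one has $u_0=0$, so $u_s=s\hat w\in[-(M+1),0]$ for small $s$, while off the support $u_s=u_0\geq -M$ by Lemma~\ref{M1_const}, and $u_0\leq 0$ on $(x_2,\infty)$ keeps the sign constraints intact. The trade-off is that your argument leans on the differentiability of $\N$ and $\hat J$ established in Sections~\ref{sec_prelim}--\ref{sec_varForm}, which the paper proves but then largely bypasses in favor of direct energy comparisons; in exchange you get a cleaner, more conceptual proof that also dispatches any finite zero interval (not just case (c)) in one stroke.
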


\begin{proof}
In view of Lemma~\ref{u0(x2)}, it suffices to eliminate case (c) in its proof. 
Let $u_0 = 0$ on a finite interval $[a, b] \subset [x_2, \infty)$. Take $\ep > 0$ small and define 
\begin{equation*}
\unew(x) =
\begin{cases}
 -\ep \sin\left(\frac{\pi(x-a)}{b-a} \right), \, & \text{   if } x \in [a, b], \\
u_0,  \,  & \text{ otherwise. }
\end{cases}
\end{equation*}
It is clear that $\unew \in \A$. 
Let us set $v_0 = \N u_0$ and $\vnew = \N\unew$. {With $\| \unew - u_0\|_{L^1(0, \infty)} = \frac{2(b-a)}{\pi}\ep$, it follows from the calculation in \eqref{change_in_O} that the change in nonlocal energy is given by}
\begin{align*}
&\integral \left \{ \left ( \frac{1}{2} \unew \vnew + \frac{1}{4}\vnew^4 \right) - \left ( \frac{1}{2}u_0v_0 + \frac{1}{4}  {v_0}^4 \right) \right \} \, dx \\
&\leq  - \min_{x \in I}v_0(x)\frac{(b-a)}{\pi}\ep + {\sqrt{2} \, C_0(M_1 + 1)^{5/2}} \max\{1, 1/\gamma^3\} \left(\frac{2(b-a)}{\pi}\ep\right)^{3/2}.
\end{align*}

{Since $0 \leq F(\xi) \leq (1+\beta)\xi^2/2$ for small $\xi$, together with $u'_0 = 0$ and $F(u_0) = 0$ on $[a, b]$, we obtain}
\begin{align*}
J(\unew) - J(u_0) =& \int_{a}^{b} \left \{ \frac{d}{2}\unew^{\prime2} + F(\unew) \right \}  \, dx  \\
&\quad + \integral \left \{ \left ( \frac{1}{2} \unew \N\unew + \frac{1}{4}(\N\unew)^4 \right) - \left ( \frac{1}{2}u_0\N u_0 + \frac{1}{4}  (\N u_0)^4 \right) \right \} \, dx \\
=& \; O(\ep^2) - C \ep + O(\ep^{3/2}) 
\end{align*}
{for some positive constant C. Then $J(\unew) < J(u_0)$ if we choose $\ep$ sufficiently small, but this contradicts the fact that $u_0$ is a minimizer.} 
\end{proof}

\begin{lem} \label{sign_change}
If $\gamma \leq \gamma_0$ and $d \leq d_1$, then
\begin{enumerate}[(i)] \itemsep=0.25mm
\item {$u_0$ has a slow decay at $+\infty$;}
\item $u_0$ changes sign. 
\end{enumerate}
\end{lem}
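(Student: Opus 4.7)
\textbf{Proof plan for Lemma~\ref{sign_change}.} My plan hinges on an asymptotic analysis of $(u_0,v_0)$ at $+\infty$ combined with the sign information already collected. Since $(u_0,v_0)\to(0,0)$ with both components in $H^3(0,\infty)$, the trajectory must approach the origin along the two-dimensional stable manifold of the hyperbolic equilibrium for system \eqref{matrixForm}. Invoking the stable manifold theorem, I expect to obtain constants $C_1,C_2\in\mathbb{R}$ such that
\[
\begin{pmatrix} u_0(x) \\ v_0(x) \end{pmatrix} = C_1 e^{-\sqrt{\lambda_1}\,x}\mathbf{a} + C_2 e^{-\sqrt{\lambda_2}\,x}\mathbf{b} + o\bigl(e^{-\sqrt{\lambda_2}\,x}\bigr),
\]
and the content of (i) is precisely that $C_1\neq 0$, so that the slower rate $e^{-\sqrt{\lambda_1}\,x}$ actually dominates.

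To pin down the signs of $C_1$ and $C_2$, I would project this expansion onto the left eigenvectors $\mathbf{l}_1$ and $\mathbf{l}_2$ of $A$. Using the orthogonality relations $\mathbf{l}_1\cdot\mathbf{b}=0=\mathbf{l}_2\cdot\mathbf{a}$, one obtains
\[
\psi_1(x) = C_1(\mathbf{l}_1\cdot\mathbf{a})\,e^{-\sqrt{\lambda_1}\,x} + o(e^{-\sqrt{\lambda_1}\,x}), \qquad \psi_2(x) = C_2(\mathbf{l}_2\cdot\mathbf{b})\,e^{-\sqrt{\lambda_2}\,x} + o(e^{-\sqrt{\lambda_2}\,x}).
\]
By Lemma~\ref{psi2} both $\psi_1$ and $\psi_2$ are non-negative on $[0,\infty)$, and by \eqref{evector_dot} one has $\mathbf{l}_1\cdot\mathbf{a}>0$ and $\mathbf{l}_2\cdot\mathbf{b}<0$. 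Matching the asymptotic signs therefore forces $C_1\geq 0$ and $C_2\leq 0$.

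The remaining step is to exclude $C_1=0$, which is where I would invoke Lemma~\ref{pos_all}, giving $v_0>0$ on $[0,\infty)$. If $C_1=0$, the second component of the expansion reads $v_0(x)\sim C_2 e^{-\sqrt{\lambda_2}\,x}$, and $v_0>0$ then forces $C_2>0$, contradicting $C_2\leq 0$ unless $C_2=0$ as well. But $(C_1,C_2)=(0,0)$ corresponds to the trivial solution on the stable manifold, which would render $(u_0,v_0)\equiv(0,0)$ near $+\infty$ and hence everywhere by unique continuation for the analytic ODE system \eqref{matrixForm}, contradicting $J(u_0)<0$. Thus $C_1>0$, proving (i). For (ii), the first component of the expansion becomes $u_0(x)\sim -C_1 e^{-\sqrt{\lambda_1}\,x}<0$ for all sufficiently large $x$, while $u_0(0)\geq \beta>0$ since $x_1\geq m_2>0$ by Lemma~\ref{min_sequence}; the intermediate value theorem then produces a finite sign-change point and (ii) follows.

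The main technical hurdle is making the two-term asymptotic expansion rigorous. Because the nonlinearities $-u_0^2(1+\beta-u_0)/d$ and $v_0^3$ in \eqref{matrixForm} are only of quadratic or cubic order in the small quantities $u_0,v_0$, a careful bootstrap is needed: one first derives some uniform exponential decay from the hyperbolicity of the linearization, then feeds this back into the scalar equations for $\psi_1$ and $\psi_2$ (which are of the form $\psi_i''=\lambda_i\psi_i+(\text{higher order in }u_0,v_0)$) via variation of parameters to upgrade the bound to the sharp rate $e^{-\sqrt{\lambda_i}\,x}$ together with an error of strictly faster exponential decay. Once this expansion is in hand, the sign-chasing above is essentially immediate and both (i) and (ii) drop out together.
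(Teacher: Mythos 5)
Your proposal is correct and follows essentially the same route as the paper: expand $(u_0,v_0)$ near $+\infty$ using the linearization \eqref{linearization}, project onto the left eigenvectors to read off sign constraints on the coefficients $C_1,C_2$ from Lemma~\ref{psi2} together with \eqref{evector_dot}, and then use the positivity of $v_0$ (Lemma~\ref{pos_all}) to force $C_1>0$, hence slow decay with $u_0<0$ at large $x$. The differences are cosmetic rather than substantive. Where the paper splits into the two alternatives ``fast decay'' ($C_1=0$, $C_2\neq0$) versus ``slow decay,'' and derives a contradiction for the former from $\psi_2\geq 0$ and $v_0>0$, you instead extract $C_1\geq 0$ and $C_2\leq 0$ simultaneously and then close the argument with a unique-continuation step to exclude $(C_1,C_2)=(0,0)$; both bookkeeping styles rely on exactly the same lemmas. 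You also flag, correctly, that the two-term asymptotic expansion requires a bootstrap to upgrade the crude exponential decay given by hyperbolicity to the sharp rate; the paper quietly defers this to \cite{Chen:2015}, so this is not a gap but you are right that it needs justification.

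One point you glossed over, and should add: before invoking the stable manifold theorem for \eqref{matrixForm} you must know that $(u_0,v_0)$ actually \emph{solves} \eqref{matrixForm} on a tail $[y_1,\infty)$; a priori the minimizer only satisfies the Euler--Lagrange equation away from where a constraint is active. This is supplied by Lemmas~\ref{u0(x2)} and~\ref{u0neq0}, which guarantee that $u_0$ does not stick to $0$ (nor to $\beta$ or $1$) on $[y_1,\infty)$ for $y_1$ large, and the paper explicitly opens its proof with this observation. With that step added, your argument matches the paper's.
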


\begin{proof}
By Lemmas \ref{u0(x2)} and \ref{u0neq0}, there exists some large $y_1 > 0$ such that $u_0$ vanishes to $0$ and $u_0 \neq 0$ on $[y_1, \infty)$. Therefore, we can study the behavior of $(u_0, v_0)$ near $+\infty$ from the linearization in \eqref{linearization}. If $u_0$ has a fast decay at $+\infty$, then $\begin{pmatrix} u_0 \\ v_0 \end{pmatrix} \sim C_2e^{-\sqrt{\lambda_2}x}\mathbf{b}$ with $C_2 \neq 0$. Therefore $\psi_2 = \mathbf{l_2}\cdot\begin{pmatrix} u_0 \\ v_0 \end{pmatrix} \sim C_2e^{-\sqrt{\lambda_2}x}\mathbf{l_2}\cdot\mathbf{b}$. {Since $\psi_2 \geq 0$ by Lemma~\ref{psi2} and $\mathbf{l}_2 \cdot \mathbf{b} <0$ by \eqref{evector_dot}, it follows that $C_2 < 0$.} Recall that $\mathbf{b} = (-\alpha_2, 1)^{T}$, where $\alpha_2 = \beta/d - \lambda_1 > 0$. Then $u_0 > 0$ and $v_0 < 0$ at large $x$, which contradicts the positivity of $v_0$. The proof of (a) is now complete.

With known slow decay, $\begin{pmatrix} u_0 \\ v_0 \end{pmatrix} \sim C_1e^{-\sqrt{\lambda_1}x}\bold{a}$ with $C_1 \neq 0$. Taking inner product with $\mathbf{l_1}$ yields $\psi_1 \sim C_1e^{-\sqrt{\lambda_1}x}\mathbf{l_1}\cdot\bold{a}$. It follows again from \eqref{evector_dot} and the positivity of $v_0$ that $C_1 > 0$. With $\mathbf{a}=(-1,d\alpha_2)^{T}$, it is clear that $u_0$ is negative at large $x$. Therefore, $u_0$ must change sign at some finite $x_2$.
\end{proof}

\noindent {The above lemma eliminates case (c) in the proof of Lemma~\ref{u0(x2)}. As a consequence we have the following corollary.}

\begin{cor} \label{x2_unique}
{Suppose $\gamma \leq \gamma_0$ and $d \leq d_1$. Let $x_1 = \inf\{y: u_0(x) < \beta$ if $x \in (y, \infty)$\}. Then the minimizer 
$u_0 \in C^{\infty}[x_1, \infty)$. In fact $u_0$ changes sign and satisfies (\ref{FN_nonlinear_steady}a) on this interval. Moreover the set $\{x_2\}$ contains a single point, i.e. $u_0$ crosses $0$ at only one point.}
\end{cor}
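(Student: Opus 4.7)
The plan is to collect what has already been proven. First, by Lemma~\ref{sign_change} the minimizer $u_0$ changes sign at some finite point, and by Lemma~\ref{u0neq0} its zero set contains no interval. Setting $x_2 = \inf\{y > x_1 : u_0(y)=0\}$, I have $u_0>0$ on $[x_1,x_2)$, so the hypotheses of Lemma~\ref{u0(x2)} are fulfilled at this specific $x_2$. That lemma then supplies $u_0'<0$ on $[x_1,x_2)$, $u_0'(x_2)<0$, and $u_0<0$ on the whole of $(x_2,\infty)$. In particular $u_0$ has no further zero past $x_2$, so the set $\{x_2\}$ is a singleton.

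Next I would establish smoothness on $[x_1,\infty)$. By the definition of $x_1$ we have $u_0<\beta$ on $(x_1,\infty)$, and since the earlier lemma yields $\max u_0 < 1$, the only way $u_0$ can touch $\{0,\beta,1\}$ on the open interval $(x_1,\infty)$ is by hitting $0$ at the single point $x_2$. Applying Lemma~\ref{regularity} on each side of $x_2$, I obtain $u_0\in C^\infty[x_1,x_2]$ and $u_0\in C^\infty[x_2,\infty)$, with (\ref{FN_nonlinear_steady}a) holding classically on each closed subinterval. To glue these two regularity pieces across $x_2$, I invoke the corollary that rules out cases (P3), (Q3), (R3), so Lemma~\ref{corner_lemma} applies and yields $u_0\in C^1$ at $x_2$.

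Finally, since $v_0\in C^\infty[0,\infty)$ and $f$ is a polynomial, the common one-sided limits $\lim_{x\to x_2^{\pm}} u_0''(x) = (v_0(x_2)-f(0))/d$ agree, promoting $u_0$ to $C^2$ at $x_2$ with (\ref{FN_nonlinear_steady}a) holding there classically. Repeated differentiation of (\ref{FN_nonlinear_steady}a), combined with the known smoothness of $v_0$ and of $f$, then bootstraps $u_0$ to $C^\infty[x_1,\infty)$. The one step deserving care is the matching of second derivatives at $x_2$, but the transversal crossing $u_0'(x_2)<0$ ensures that the pointwise ODE on each side forces identical second-derivative limits, so the bootstrap argument goes through cleanly.
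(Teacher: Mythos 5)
Your proof is correct and follows essentially the same route as the paper, which states this corollary without an explicit proof as a direct assembly of Lemmas~\ref{sign_change}, \ref{u0(x2)} and \ref{u0neq0} together with Lemma~\ref{regularity} and the corner lemma (Lemma~\ref{corner_lemma}) to glue the two smooth pieces at $x_2$. One cosmetic remark: the matching of the one-sided second derivatives at $x_2$ comes simply from continuity of $u_0$ and $v_0$ in the equation $du_0''=v_0-f(u_0)$, not from the transversality $u_0'(x_2)<0$, which plays no role in that step.
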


\section{On the constraint $u_0 = \beta$} \label{sec_constraintBeta}
\setcounter{equation}{0}
\renewcommand{\theequation}{\thesection.\arabic{equation}}

In this section, we establish that $(u_0, v_0)$ is the standing pulse solution of \eqref{FN_nonlinear_steady} by ruling out the possibility that $u_0$ equals to $\beta$ on any interval. We exploit the fact that \eqref{FN_nonlinear_steady} is a Hamiltonian system with
\begin{equation*}
\frac{1}{2}{v_0^{\prime2}} - \frac{\gamma}{2}v_0^2 - \frac{1}{4}v_0^4 + u_0v_0 - \frac{d}{2}u_0^{\prime2} + F(u_0) =  0
\end{equation*}
and that this identity is still valid on $(-\infty,\infty)$ even when $u_0 = \beta$ on an interval where \eqref{FN_nonlinear_steady} fails. Note that  
in the event of such an interval exists,  $u_0$ may not be $C^2$ at the boundary points of the interval.

\begin{lem}
Even if there are intervals where $u_0 = \beta$, $(u_0, v_0)$ satisfies 
\begin{equation} \label{Hamiltonian}
\frac{1}{2}{v_0^{\prime2}} - \frac{\gamma}{2}v_0^2 - \frac{1}{4}v_0^4 + u_0v_0 - \frac{d}{2}u_0^{\prime2} + F(u_0) = 0.
\end{equation}
\end{lem}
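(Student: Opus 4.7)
The plan is to verify that the functional
$H(x) := \tfrac{1}{2}v_0^{\prime 2} - \tfrac{\gamma}{2}v_0^2 - \tfrac{1}{4}v_0^4 + u_0 v_0 - \tfrac{d}{2}u_0^{\prime 2} + F(u_0)$
is identically zero on $[0,\infty)$ (the identity on $(-\infty,\infty)$ then follows from the even extension). First I would handle the generic situation: by Lemma~\ref{regularity}, Corollary~\ref{x2_unique}, and Lemma~\ref{u0neq0}, on any open subinterval where $u_0 \notin \{\beta\}$ the pair $(u_0,v_0)$ is smooth and satisfies both equations of \eqref{FN_nonlinear_steady} classically. Multiplying (\ref{FN_nonlinear_steady}a) by $u_0'$ and (\ref{FN_nonlinear_steady}b) by $v_0'$, using $F'(u_0)u_0' = -f(u_0)u_0'$ to recognise perfect derivatives, and subtracting, one obtains $H'(x) = 0$ pointwise. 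Hence $H$ is constant on every such regular subinterval.

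The new ingredient is the plateau case. Suppose $u_0 \equiv \beta$ on some interval $[a,b]$. There $u_0' \equiv 0$ and $F(u_0) \equiv F(\beta)$ are constant, so $H$ reduces to $\tfrac{1}{2}v_0^{\prime 2} - \tfrac{\gamma}{2}v_0^2 - \tfrac{1}{4}v_0^4 + \beta v_0 + F(\beta)$. Although (\ref{FN_nonlinear_steady}a) can fail on $[a,b]$, the inhibitor equation (\ref{FN_nonlinear_steady}b) is always satisfied because $v_0 = \mathcal{N}u_0$ by construction. Differentiating $H$ and using $u_0 = \beta$ gives
$H'(x) = v_0'\bigl(v_0'' - \gamma v_0 - v_0^3 + u_0\bigr) = 0,$
so $H$ is again constant on $[a,b]$.

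Next I would stitch these constants together. The corner lemma (Lemma~\ref{corner_lemma}) gives $u_0 \in C^1[0,\infty)$, and since $v_0 \in H^3(0,\infty)$ we have $v_0 \in C^{2+1/2}[0,\infty)$. As $H$ involves only $u_0, u_0', v_0, v_0'$, it is therefore continuous on $[0,\infty)$. A continuous function whose derivative vanishes on the complement of the (finitely many) plateau boundary points, and which is constant on each plateau, is globally constant on $[0,\infty)$.

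Finally I would identify the value of this constant. On $(x_2,\infty)$ the pair $(u_0,v_0)$ is smooth and decays exponentially to $(0,0)$ via the slow-decay mode in \eqref{linearization} (cf.\ Lemma~\ref{sign_change}), so $u_0, u_0', v_0, v_0' \to 0$ as $x \to \infty$ and $H(x) \to 0$. Thus the common value of $H$ is zero, which is exactly \eqref{Hamiltonian}. The only mildly delicate point I anticipate is the transition at the endpoints of a plateau, where $u_0''$ may jump; but because $H$ depends only on $u_0$ and its \emph{first} derivative—both continuous globally by the corner lemma—this causes no trouble, and the argument goes through cleanly.
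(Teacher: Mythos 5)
Your proposal is correct and follows essentially the same route as the paper: conservation of the Hamiltonian on the regular pieces, conservation on a $\beta$-plateau using only the inhibitor equation (where $u_0'\equiv 0$ and $F(u_0)\equiv F(\beta)$), stitching the constants together via the continuity of $u_0'$ (corner lemma) and $v_0'$, and identifying the constant as zero from the decay at $+\infty$. The only cosmetic difference is that the paper propagates the zero constant leftward from infinity through each plateau, whereas you argue that $H$ is continuous and piecewise constant and hence globally constant; these are the same argument.
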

\begin{proof}
It can be seen from the linearization of $(u_0, v_0)$ that both $u_0$ and $v_0$ die down exponentially as $x \ra \infty$. Since $u_0^{\prime\prime}$ and
$v_0^{\prime\prime}$ are bounded, the standard interpolation theorem implies that $u_0^{\prime}$ and $v_0^{\prime}$ also die down exponentially. 

On the interval $[x_1, \infty)$, we multiply (\ref{FN_nonlinear_steady}a) by $-u_0^{\prime}$ and (\ref{FN_nonlinear_steady}b) by $v_0^{\prime}$, sum the resulting equations, and then integrate to obtain
\begin{equation}\label{ham_const}
\frac{1}{2}{v_0^{\prime2}} - \frac{\gamma}{2}v_0^2 - \frac{1}{4}v_0^4 + u_0v_0 - \frac{d}{2}u_0^{\prime2} + F(u_0) =  \mbox{constant}.
\end{equation}
By taking the limit as $x \ra \infty$, the integration constant is clearly zero as in (\ref{Hamiltonian}). This continues to be valid from the right until $u_0 = \beta$ on an 
interval $[a, b]$ with $b \leq x_1$ when (\ref{FN_nonlinear_steady}a) fails to hold. Note that $u_0^{\prime}(b^+) = u_0^{\prime}(b^-) = 0$ by the corner lemma. On $[a,b]$ where $u_0 = \beta$, it follows from (\ref{FN_nonlinear_steady}b) that $v_0^{\prime\prime} - \gamma v_0 - v_0^3 + \beta = 0$. Therefore
\begin{equation*}
\frac{d}{dx} \left ( \frac{1}{2} v_0^{\prime2} - \frac{\gamma}{2}v_0^2 - \frac{1}{4}v_0^4 + \beta v_0 \right ) = 0,
\end{equation*}
which implies that the left-hand side of (\ref{ham_const}) does not change on $[a, b]$. Since (\ref{Hamiltonian}) holds at $x = b$, this
constraint continues to be valid on $[a, b]$. Once $x < a$ with $u_0 > \beta$, both (\ref{FN_nonlinear_steady}a) and (\ref{FN_nonlinear_steady}b) are satisfied so that (\ref{ham_const}) holds. Then evaluating (\ref{ham_const}) at $x=a$ implies that the integration constant is zero. Hence, (\ref{Hamiltonian}) holds everywhere. 
\end{proof}

\begin{lem} \label{lem_gamma1}
 Let $\beta \in (1/3,1/2)$ (so that $\gamma_0>0$). 
Suppose $\gamma <  \gamma_1 \equiv \min\{\gamma_0, 2(\beta + F(\beta))- 1/2 \}$ and $d \leq d_1$. Then there cannot be an interval $[a, b] \subset [0, x_1]$ where $u_0 = \beta$. In fact there is no point at which $u_0 = \beta$ and $u_0^{\prime} = 0$, and the set $\{ x_1 \}$ has only a unique point.
\end{lem}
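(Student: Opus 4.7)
The plan is to exploit the Hamiltonian identity \eqref{Hamiltonian} at any hypothetical point where $u_0$ tangentially meets the level $\beta$. Suppose there is an $x_0 \in [0,\infty)$ such that $u_0(x_0) = \beta$ and $u_0'(x_0) = 0$. Substituting into \eqref{Hamiltonian} gives
\begin{equation*}
\frac{1}{2} v_0'(x_0)^2 = \frac{\gamma}{2} v_0(x_0)^2 + \frac{1}{4}v_0(x_0)^4 - \beta\, v_0(x_0) - F(\beta).
\end{equation*}
Since the left hand side is non-negative, this forces $g(v_0(x_0)) \geq 0$, where
\begin{equation*}
g(v) \equiv \gamma v^2 + \tfrac{1}{2} v^4 - 2\beta v - 2 F(\beta).
\end{equation*}
The range $v_0 \in (0,1]$ (from Lemma~\ref{v_upperbnd} together with $u_0 \leq 1$ on $\A$, and positivity from Lemma~\ref{pos_all}) suggests I should show $g<0$ on $[0,1]$ to derive a contradiction.

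To verify $g<0$ on $[0,1]$, I would observe that $g''(v) = 2\gamma + 6v^2 > 0$, so $g$ is strictly convex and attains its maximum at an endpoint of $[0,1]$. A direct manipulation of the formula for $F$ yields $F(\beta) = \beta^4/4 + \beta^3(1-2\beta)/6$, which is strictly positive on $(1/3,1/2)$, so $g(0) = -2F(\beta) < 0$. The remaining endpoint value
\begin{equation*}
g(1) = \gamma + \tfrac{1}{2} - 2\beta - 2F(\beta)
\end{equation*}
is negative precisely when $\gamma < 2(\beta + F(\beta)) - \tfrac{1}{2}$, which is exactly the restriction $\gamma < \gamma_1$ imposes on top of $\gamma < \gamma_0$. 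Convexity then yields $g < 0$ throughout $[0,1]$, delivering the contradiction and proving that no point can simultaneously satisfy $u_0=\beta$ and $u_0'=0$.

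The remaining structural claims fall out quickly. If $u_0 \equiv \beta$ on some interval $[a,b] \subset [0,x_1]$, then $u_0' \equiv 0$ on $[a,b]$, contradicting what has just been shown. For uniqueness of $x_1$, any admissible choice must satisfy $u_0(x_1) = \beta$ by continuity and the definition of $\A$; if two distinct admissible values $x_1 < x_1'$ existed, then admissibility would force $u_0 \geq \beta$ on $[0,x_1']$ and $u_0 \leq \beta$ on $[x_1,x_2]$, compelling $u_0 \equiv \beta$ on $[x_1,x_1']$, a case just excluded. Hence $\{x_1\}$ is a singleton.

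The main obstacle is keeping the range of $v_0$ inside $[0,1]$ so that the convexity bound on $g$ applies; for large $v$ the quartic term dominates and $g$ becomes positive, which would break the argument. This is precisely why the a priori bounds $0 < v_0 \leq 1$ established in earlier sections are indispensable, and why the threshold $\gamma_1 = 2(\beta + F(\beta)) - 1/2$ is calibrated exactly to make $g(1)<0$.
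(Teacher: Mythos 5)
Your proof is correct and follows essentially the same route as the paper: apply the Hamiltonian identity \eqref{Hamiltonian} at a point where $u_0 = \beta$, $u_0' = 0$, then use $0 < v_0 \leq 1$ to show that $\tfrac12 (v_0')^2$ would have to be negative under the constraint $\gamma < 2(\beta + F(\beta)) - \tfrac12$, a contradiction. Your packaging via the auxiliary function $g$ and its strict convexity on $[0,1]$ is a slightly cleaner way to carry out the same elementary estimate the paper performs by the manipulation $v_0\{\cdots - \beta\} - F(\beta) \le v_0\{\cdots - \beta - F(\beta)\}$ (which also silently uses $F(\beta)>0$, $0 < v_0 \le 1$), but it is not a different idea.
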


\begin{proof}
Assume there is an interval $[a, b] \subset [0, x_1]$ such that $u_0 = \beta$. By the corner lemma, $u_0^{\prime}(b) = 0$. Since $v_0 \leq 1$ as shown in Lemma~\ref{v_upperbnd} and $F(\beta) {= (2\beta^3 - \beta^4)/12} > 0$, on evaluating (\ref{Hamiltonian}) at $x=b$ we obtain
\begin{eqnarray*}
\frac{1}{2}(v_0^{\prime}(b))^2 &=& v_0(b)\left\{ \frac{\gamma}{2}v_0(b) + \frac{1}{4}v_0^3(b) - \beta \right\} - F(\beta) \\
& \leq & v_0(b)\left\{ \frac{\gamma}{2}v_0(b) + \frac{1}{4}v_0^3(b) - \beta - F(\beta)\right\} \\
&< & 0
\end{eqnarray*}
when $\frac{\gamma}{2}v_0(b) + \frac{1}{4}v_0^3(b) - \beta - F(\beta) \leq \frac{\gamma}{2} + \frac{1}{4} - \beta - F(\beta)<0$.
But $\frac{1}{2}(v_0^{\prime}(b))^2 < 0$ is absurd, and thus no such interval $[a, b]$ exists. It is clear from the proof that there is no point at which $u_0 = \beta$ and 
$u_0^{\prime} = 0$. As a consequence, the point $x_1$ is unique.
\end{proof}

At this point, we have completely removed the possibility that $u_0$ equals to one of the constraints imposed on $\A$. By the regularity estimates $u_0$ and $v_0$ are $C^{\infty}[0, \infty)$ functions. Extending them to be even functions on $(-\infty, \infty)$, we conclude that $(u_0, v_0)$ is a standing pulse solution to \eqref{FN_nonlinear_steady} satisfying $\lim_{|x| \ra \infty} (u_0, v_0) = (0, 0)$. This finishes the proof of Theorem \ref{theorem1}. 
We note that  a plot of  $\gamma_1$ in Lemma~\ref{lem_gamma1} versus $\beta$ has been represented in Figure~\ref{fig:1}. A better estimate
can make $\gamma_1$ larger.

Various qualitative properties of $u_0$ have already been investigated in the previous lemmas. To finish the proof of Theorem \ref{theorem2}, we show the following qualitative property of $u_0$.

\begin{lem}
Suppose $\gamma < \gamma_1$ and $d \leq d_1$. Then $u_0$ has a unique negative local minimum point on $[0, \infty)$ which is also the global minimum point.
\end{lem}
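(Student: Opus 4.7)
The plan is to analyze $\phi := u_0'$ as a solution of a linear elliptic equation on $(x_2,\infty)$ whose zeroth-order coefficient and inhomogeneity both have a definite sign, so that a short maximum principle rules out any sign change of $\phi$ beyond the first one. Since parts (i) and (ii) of Theorem~\ref{theorem2} (already established) give $u_0 \geq 0$ on $[0, x_2]$, every negative local minimum of $u_0$ on $[0, \infty)$ must lie in $(x_2, \infty)$, and it is enough to show that $\phi$ changes sign exactly once on that interval. By Corollary~\ref{x2_unique}, $u_0 \in C^{\infty}[x_1, \infty)$ and solves (\ref{FN_nonlinear_steady}a) classically, so differentiating the equation once yields
\[
-\phi'' + q\phi = s \qquad \text{on } (x_2, \infty),
\]
with $q(x) := -f'(u_0(x))/d$ and $s(x) := -v_0'(x)/d$. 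Because $u_0 < 0$ on $(x_2,\infty)$ and $f'(u) = -3u^2 + 2(1+\beta)u - \beta < -\beta < 0$ for every $u \leq 0$, we have $q>0$. Lemma~\ref{pos_all} gives $v_0' < 0$ on $(x_2,\infty)$, so $s > 0$ there as well.

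The key observation is then one line: at any interior local minimum $y$ of $\phi$, one has $\phi'(y) = 0$ and $\phi''(y) \geq 0$, so the ODE forces $q(y)\phi(y) = s(y) + \phi''(y) \geq s(y) > 0$ and hence $\phi(y) > 0$. In other words, $\phi$ cannot attain an interior local minimum at a non-positive value on $(x_2,\infty)$.

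To conclude, note that $\phi(x_2) = u_0'(x_2) < 0$ by part (i) of Theorem~\ref{theorem2} and that the slow-decay asymptotics of Lemma~\ref{sign_change} give $\phi(x) > 0$ for all large $x$. The intermediate value theorem produces a smallest zero $x_* \in (x_2,\infty)$ of $\phi$. I claim $\phi > 0$ strictly on $(x_*,\infty)$: otherwise, either $\inf_{[x_*,\infty)} \phi < 0$, in which case (using $\phi(x_*)=0$ and $\phi \to 0$ at infinity) the infimum is attained at an interior point $y$ that is therefore an interior local minimum with $\phi(y) < 0$, contradicting the observation; or else $\phi \geq 0$ on $[x_*,\infty)$ with some interior zero $y > x_*$, in which case $y$ is an interior local minimum with $\phi(y) = 0$, again contradicting the observation. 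Hence $\phi < 0$ on $[x_2, x_*)$ and $\phi > 0$ on $(x_*,\infty)$, so $u_0$ decreases strictly on $[x_2, x_*]$ and increases strictly on $[x_*,\infty)$, and $x_*$ is the unique strict local minimum of $u_0$ on $(x_2,\infty)$. By the initial reduction, $x_*$ is also the unique negative local minimum of $u_0$ on $[0,\infty)$ and is the global minimum.

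The main obstacle is not the maximum principle itself but securing the strict sign $s > 0$ on $(x_2,\infty)$; this rests on the convexity and monotone decay of $v_0$, both of which were already obtained inside Lemma~\ref{pos_all}.
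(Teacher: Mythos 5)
Your proposal is correct and is essentially the same argument as the paper's: differentiate (\ref{FN_nonlinear_steady}a), observe that $\phi = u_0'$ satisfies a second-order equation on $(x_2,\infty)$ whose zeroth-order coefficient $-f'(u_0)/d$ and forcing $-v_0'/d$ are both strictly positive (the former because $u_0<0$ and $f'<-\beta$ on non-positive arguments, the latter from Lemma~\ref{pos_all}), use this to rule out a non-positive interior minimum of $\phi$, and combine with $\phi(x_2)<0$ and the slow-decay positivity of $\phi$ at infinity from Lemma~\ref{sign_change}. The only cosmetic difference is that you unpack the maximum principle and Hopf-lemma step into an explicit one-line computation at a candidate interior minimum, whereas the paper invokes the theorems by name; both rely on the same sign information and yield the same conclusion.
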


\begin{proof}
Recall from Lemma~\ref{pos_all} that $v_0'<0$ on $[x_2, \infty)$. Then $du_0^{\prime\prime\prime} +f'(u_0)u_0^{\prime} = v^{\prime}_0 < 0$ on $[x_2, \infty)$. Since $u_0 \leq 0$ on $[x_2, \infty)$, we have $f^{\prime}(u_0) \leq 0$ on that interval. 

Next from Lemma~\ref{sign_change} we know that $u_0 \sim -C_1e^{-\sqrt{\lambda_1}x}$ at $+\infty$ which implies that $u_0^{\prime} > 0$ at some large $x$. Since $u_0^{\prime}$ cannot attain a non-positive minimum on $[x_2, \infty)$ by the maximum principle, $u_0^{\prime}$ has to increase from a negative value at $x_2$ to $0$ at some $y_0 \in (x_2, \infty)$. Moreover $u_0'(y_0) > 0$ by using the Hopf lemma on $[y_0, \infty)$, and once $u_0'$ turns positive it cannot become negative again. Correspondingly $u_0$ decreases from $0$ at $x_2$ to a negative local minimum at $y_0$, and then increases to $0$ as $x \ra \infty$. Hence $u_0$ has a unique negative local minimum at $x = y_0$ on the interval $[x_2, \infty)$, which then is the global minimum of $u_0$ on the entire interval $[0, \infty)$.
\end{proof}


\end{document}